\theoremstyle{plain}
 \newtheoremstyle{miestilo}{12pt}{\topsep}{\itshape}{}{\bf}{}{ }{}
 \theoremstyle{miestilo}
\newtheorem{teorema}[subsection]{Theorem.}
\newtheorem{proposicion}[subsection]{Proposition.}
\newtheorem{lema}[subsection]{Lemma.}
\newtheorem{corolario}[subsection]{Corollary.}
 \newtheoremstyle{misnotas}{12pt}{12pt}{}{}{\bf}{}{ }{\remark}
  \theoremstyle{misnotas}
  \newtheorem{apartado}[subsection]{\ {\ }}
\begin{document}

\flushbottom
%
%

{\LARGE  {\bf \centerline{ Local orders in Jordan algebras}} } %

%
%
   %
 \medskip
\medskip
\centerline{ Fernando Montaner \footnote{Partially supported  by
the Spanish Ministerio de Econom\'{\i}a y Competitividad--Fondo Europeo de Desarrollo Regional (FEDER) MTM2013-45588-C3-2-P, and by the
Diputaci\'on General de Arag\'on-Fondo Social Europeo (Grupo de Investigaci\'{o}n de
\'{A}lgebra).}} \centerline{{\sl Departamento de Matem\'{a}ticas,
Universidad de Zaragoza}} \centerline{{\sl 50009 Zaragoza,
Spain}} \centerline{E-mail: fmontane@unizar.es} \centerline{and}
\centerline{ Irene Paniello \footnote{Partially supported  by
the Spanish Ministerio de Econom\'{\i}a y Competitividad--Fondo Europeo de Desarrollo Regional (FEDER) MTM2013-45588-C3-2-P.}} \centerline{{\sl
Departamento de Estad\'{\i}stica e Investigaci\'{o}n Operativa, Universidad
P\'{u}blica de Navarra}}
 \centerline{{\sl 31006 Pamplona, Spain}}
\centerline{E-mail: irene.paniello@unavarra.es}
%
%
\begin{abstract}
We study a notion of order in Jordan algebras based on the version for Jordan algebras of the ideas of Fountain and Gould \cite{fogo1} as adapted to the Jordan context by Fern\'{a}ndez-L\'{o}pez and Garc\'{\i}a-Rus \cite{fg1}, making use of results on general algebras of quotients of Jordan algebras. In particular, we characterize the set of Lesieur-Croisot elements of a nondegenerate Jordan algebra as those elements of the Jordan algebra lying in the socle of its maximal algebra of quotients, and apply this relationship to extend to quadratic Jordan algebras the results of Fern\'{a}ndez-L\'{o}pez and Garc\'{\i}a-Rus on local orders in nondegenerate Jordan algebras satisfying the descending chain condition  on principal inner ideals and not containing ideals which are nonartinian quadratic factors.
\end{abstract}

\section*{Introduction}

Local orders for Jordan algebras were introduced and studied by Fern\'{a}ndez-L\'{o}pez and Garc\'{\i}a-Rus in \cite{fg1, fg2} inspired by the work of Fountain and Gould \cite{fogo1, fogo2, fogo3}, and \'{A}hn and M\'{a}rki \cite{anhm1, anhm2} on orders of associative  rings following ideas on quotients in semigroup theory. The original purpose of that research on associative algebras was to introduce a notion of localization inspired in Ore's construction but without the requirement of having an identity element, so that the "regular elements" were not intended to become invertible, but merely "locally invertible". That turn to locality fits well with some of the ideas of Jordan theory, in particular with that of local algebra at a given element \cite{am-local}, a remark that made natural the step taken by Fern\'{a}ndez-L\'{o}pez and Garc\'{\i}a-Rus of extending these notion to the Jordan context.

Fern\'{a}ndez-L\'{o}pez and Garc\'{\i}a-Rus' work was preceded and inspired by results on Jordan algebras of fractions which originated in the question raised by Jacobson on  whether results similar to Ore's construction could be obtained for Jordan algebras  \cite[p. 426]{jac-struc-rep}.  As it is well known, Ore's results were extended by Goldie \cite{goldie1, goldie2} to the study of embeddability of associative rings in simple or semisimple artinian rings, which, in turn, motivated associative localization theory. In the realm of Jordan theory, Jacobson's question, or rather the related question on the possibility  of extending  Goldie's results to the Jordan setting was first answered by Zelmanov  \cite{z-goldie-1,z-goldie-2} (later extended  by Fern\'andez-L\'opez, Garc{\'\i}a-Rus and Montaner to quadratic Jordan algebras \cite{fgm}). As for Jacobson's original question, a complete answer was given by Mart\'{\i}nez \cite{martinez} based on a quite different approach that allowed her to provide necessary and sufficient Ore-like conditions for the existence of algebras of fractions of Jordan algebras (with $\frac{1}{6}\in \Phi$) (later generalized to quadratic algebras by Bowling and McCrimmon \cite{bomc}.)

Those results opened the way to a sizable area of research on algebras of quotients of Jordan algebras. We refer to  \cite{densos} for a concise overview of that field, some of whose results will be recalled in the following sections as needed. At this point we limit ourselves  to mentioning two notions which were introduced in those  developments, and which we will need in order to describe some of the problems to whose solution this paper is devoted.

First of all, whereas Mart{\'\i}nez's answer of Jacobson's problem faithfully parallels the associative situation (although through a quite different proof),  Fern\'andez-L\'{o}pez, Garc{\'\i}a-Rus and Montaner showed in \cite{fgm} that the Jordan version of Goldie theory deviates from its associative counterpart  at a significant point that will be pivotal in the present research: the natural Jordan version of the characterization of left Goldie associative algebras (and similarly, right Goldie algebras) as those for which left (resp. right) essential inner ideals are precisely the ones that contain regular elements, namely the characterization of Goldie Jordan algebras as those for which an
inner ideal is essential if and only if it contains injective
elements, no longer holds for Jordan algebras. However, that missing Goldie-type property has its own interest since, as proved in \cite{fgm}, algebras that satisfy it are precisely those which are orders in nondegenerate algebras of finite capacity, which is the natural finiteness condition from the viewpoint of the classical Jordan theory based on the use of idempotents. Those algebras were termed Lesieur-Croisot algebras (LC-algebras for short) in \cite{lc}. Jordan algebras having local algebras that are Lesieur-Croisot  were studied later by Montaner and Toc\'{o}n in \cite{lc, lc-pr}.

A second notion belonging to the just mentioned study of algebras of quotients, and that will be central in the present research, is the formulation in the Jordan setting of a more general construction  of algebras of quotients that parallels Lambeck-Utumi's associative algebras of quotients, and  that turns out to provide a common environment for most of the previously developed constructions of quotients for nondegenerate Jordan algebras (see \cite{densos}).

In spite of its generality, and in contrast to the situation in the associative theory \cite{anhm1}, local orders as defined by Fern\'andez-L\'opez and Garc{\'\i}a-Rus have not been shown to fit in that setting. Since the study of that notion of local orders is the objective of the present paper, that will be one of the issues we will address, although we will restrict that study to the case in which the algebras are local orders in over-algebras with dcc on principal inner ideals.

The organization of the paper, and the questions we will deal with will be the following:

After an initial section of preliminaries, in section 1 we recall basic facts on the just mentioned two classes of algebras of quotients of Jordan algebras, classical algebras of fractions, including the Jordan analogues of the notions and results of Goldie Theory, and general algebras of quotients which are Jordan analogues of Lambek-Utumi's algebras of quotients in the associative theory.

In addition to recalling the basic notions of these two kinds of algebras of quotients, including that of  LC-elements of a Jordan algebra, we prove a central result of the section which will ease computations later on, namely the coincidence of essentiality and density for inner ideals in algebras in which every element is LC.

Sections 2 and 3 are devoted to a deeper study of nondegenerate Jordan algebras having nonzero LC-elements, addressing our first objective, the study of the existing connection between the set of LC-elements of a nondegenerate Jordan algebra and the socle of its maximal algebra of quotients. Drawing inspiration from a well known result of associative theory,
we prove in section 2 that the set of LC-elements of a strongly prime Jordan algebra coincides with the intersection of the socle of its maximal algebra of quotients with the original algebra, and extend that result to nondegenerate algebras in Section 3.

In section 4 we introduce the notion of local order of a Jordan algebra to which this paper is devoted. This notion basically coincides with the one introduced by Fern\'{a}ndez-L\'{o}pez and Garc\'{\i}a-Rus, although it slightly differs from theirs
 in that it makes use of the quadratic version \cite{fgm} of Goldie's theorems for Jordan algebras, rather that its original linear form \cite{z-goldie-1, z-goldie-2}.

Since, as mentioned in \cite{fg1}, the original motivation for the introduction of local orders in associative theory was the introduction of an Ore-like localization in algebras which need not have an identity element thus generalizing Goldie's theorems for Jordan algebras, the regularity condition to be satisfied by the resulting over-algebra is a natural generalization of the artinian property to nonunital algebras, namely having the dcc on principal inner ideals, that is, being equal to its socle. According to that, we give here a version of the definition of local order in algebras that equal their socles, and prove that the result of the previous section on the socle of the algebra of quotients of a Jordan algebra is properly understood through the notion of local order, since the LC-ideal of a nondegenerate algebra turns out to be a local order in the socle of its algebra of quotients. As a final result in this section, we prove an analogue of a fact proved by \'{A}nh and M\'{a}rki  \cite{anhm1}, and show that local orders in algebras that equal their socles are indeed orders in the sense of \cite{densos} thus showing that this kind of quotients can also be viewed through the framework of that construction.

In Section 5 we revisit the theory of Fern\'{a}ndez-L\'{o}pez and Garc\'{\i}a-Rus developed in 	\cite{fg1,fg2}, on what they named local Goldie conditions, and their consequences. We add to that study the local LC-condition, and obtain here all those results as a natural application of the theory developed in the previous sections, thus obtaining the characterization of strongly prime and nondegenerate Jordan algebras that are local orders either in local artinian algebras, or in algebras satisfying the dcc on principal inner ideals.

%
%

\addtocounter{section}{-1}

\section{ Preliminaries}

\begin{apartado}
  We will work with    Jordan  algebras over a unital commutative ring of scalars $\Phi$
   which will be fixed throughout. We refer to
\cite{jac-struc,mcz} for   notation, terminology and basic
results on Jordan algebras. We will occasionally make use some results on  Jordan pairs, mainly
obtained from algebras, for which we refer to \cite{loos-jp}, and we will often rely on an associative background, both as an ingredient of Jordan theory when dealing with special Jordan algebras, and as a source of notions which have been extended to the Jordan algebra setting, among which we will mainly consider those  from localization theory, for which we refer to \cite{st}.

We will make use of the identities proved in
\cite{jac-struc}, which will be quoted with the
labellings QJn. In this section we recall some
of those basic results  and notations together with some other results   that will
be used in the paper.
\end{apartado}

\begin{apartado}A Jordan algebra has products   $U_xy$ and $x^2$, quadratic in $x$
and linear in $y$, whose linearizations are
 $U_{x,z}y=V_{x,y}z=\{x,y,z\}=U_{x+z}y-U_xy-U_zy$ and $x\circ y=V_xy=(x+y)^2-x^2-y^2$.

 We will denote by $\widehat{J} $ the free unital hull $\widehat{J}=\Phi 1\oplus
 J$ with products $U_{\alpha
 1+x}(\beta1+y)=\alpha^2\beta1+\alpha^2y+\alpha x\circ
 y+2\alpha\beta x+\beta x^2+U_xy$ and
 $(\alpha1+x)^2=\alpha^21+2\alpha x +x^2$.

It is well known that  any associative algebra $A$ gives rise to a Jordan algebra $A^+$ with products $U_xy=xyx$ and $x^2=xx$. A Jordan algebra is special if it is isomorphic to a subalgebra of an algebra $A^+$ for an associative $A$. If $A$ has an involution $\ast$ then $H(A,\ast)=\{a\in A\mid a=a^\ast\}$ is a Jordan subalgebra of $A^+$ and so are ample subspaces $H_0(A,\ast)$ of symmetric elements of $A$, subspaces such that $a+a^\ast, aa^\ast$ and $aha^\ast$ are in $H_0(A,\ast)$ for all $a\in A $ and all $h\in H(A,\ast)$.
\end{apartado}

\begin{apartado} A $\Phi$-submodule $K$ of a Jordan algebra $J$ is
an \emph{inner ideal} if $U_k\widehat{J}\subseteq K$ for all $k\in
K$. An inner ideal $I\subseteq J$ is an \emph{ideal} if
$\{I,J,\widehat{J}\}+U_JI\subseteq I$, a fact that we will denote in the usual way $I\triangleleft J$. If $I$, $L$ are ideals of
$J$, so is their product $U_IL$  and in particular   the
\emph{derived ideal} $I^{(1)}=U_II$ of $I$. An (inner) ideal  of $J$ is
\emph{essential}\  if it has nonzero intersection  with any
nonzero (inner) ideal of $J$.

A Jordan algebra $J$ is \emph{nondegenerate} if $U_xJ\neq0$ for any nonzero $x\in J$, and \emph{prime} if $U_IL\neq0$ for any nonzero ideals $I$ and $L$ of $J$. The algebra $J$ is said to be \emph{strongly prime} if $J$ is both nondegerate and prime.
\end{apartado}

\begin{apartado}\label{annihilator_def}
If $X\subseteq J$ is a subset of a Jordan algebra $J$, the
\emph{annihilator} of $X$ in $J$ is the set $ann_J(X)$ of all
$z\in J$ such that $U_xz=U_zx=0$ and
$U_xU_z\widehat{J}=U_zU_x\widehat{J}=V_{x,z}\widehat{J}=V_{z,x}\widehat{J}=0$
for all $x\in X$. The annihilator  is always an inner ideal of $J$,
and it is an ideal if $X$ is an ideal. If $J$ is a nondegenerate
Jordan algebra  and $I$ is an ideal of $J$, then the annihilator
of $I$ in $J$ can be characterized as follows
(\cite{mc-inh,pi-ii}):
$$ann_J(I)=\{z\in J\ \mid\ U_zI=0\}=\{z\in J\ \mid\ U_Iz=0\}.$$
\end{apartado}

\begin{apartado}\label{local algebras} For any element  $a$ in a Jordan algebra $J$,
   the   \emph{local algebra  $J_a$
of $J$ at $a$} is the quotient of the $a$-homotope $J^{(a)}$, defined over the $\Phi$-module $J$ with operations $U^{(a)}_xy = U_xU_ay$ and  $x^{(2,a)}= U_xa$, by the
ideal $Ker\,  a$ of $J^{(a)}$ of all the elements $x\in J$ such that
$U_ax=U_aU_xa=0$. If $J$ is nondegenerate  the above conditions on $x$
reduce  to $U_ax=0$.
Local algebras at nonzero elements of a nondegenerate (resp. strongly prime) Jordan algebra are nondegenerate (resp. strongly prime) \cite[Theorem 4.1]{acm}. (We recall that similar definitions can be given for associative algebras, for which we will also use the notation $R_x$ for the local algebra at an element $x$ of   $R$.)
\end{apartado}

\begin{apartado}\label{def-double}A Jordan algebra or triple system $J$ gives rise to its double Jordan pair $V(J) = (J,J)$, with (quadratic) operations obtained from the quadratic operation of $J$: $Q_xy = U_xy$ or $P_xy$. Reciprocally, every Jordan pair $V = (V^+,V^-)$ gives rise to a (polarized) triple system $T(V) = V^+\oplus V^-$. If $J$ is a triple system, it is obvious that if $I\triangleleft J$, then $(I,I)$ is an ideal of $V(J)$, however, not all ideals of $V(J)$ arise in that  way from ideals of $J$. In fact, if $I = (I^+,I^-)$ is an ideal of
$V(J) = (J,J)$ we may well have $I^+\ne I^-$, and even $I^+\cap I^-=0$, so that $I^+\oplus I^-$ is a polarized ideal of $J$ as a triple system. We however have the following result:
\end{apartado}

\begin{lema}\label{double ideals}Let $J$ be a nondegenerate Jordan algebra, if $I = I^+\oplus I^-$ is a polarized ideal of $J$ as a triple system (that is $(I^+,I^-)$ is an ideal of $V(J)$ and $I^+\cap I^- =0$), then $I=0$.
\end{lema}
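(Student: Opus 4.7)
The plan is to exploit the two interpretations of each element $x\in J$ in the double pair $V(J)=(J,J)$: $x$ sits in both $V^{+}$ and $V^{-}$, and the ideal $I^{+}\subseteq V^{+}$ and $I^{-}\subseteq V^{-}$ meet only along $I^{+}\cap I^{-}=0$, so anything the ideal axioms place in both parts must vanish.

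First I would show $U_{I^{+}}I^{+}=0$. For $x,y\in I^{+}$, on the one hand $U_xy=Q^{+}_xy\in Q^{+}_{I^{+}}V^{-}\subseteq I^{+}$; on the other hand, viewing $x$ as an element of $V^{-}$ and recalling $y\in I^{+}$, $U_xy=Q^{-}_xy\in Q^{-}_{V^{-}}I^{+}\subseteq I^{-}$. Hence $U_xy\in I^{+}\cap I^{-}=0$. Then for any $x\in I^{+}$ and $y\in J$, the inclusion $U_xy\in I^{+}$ gives $U_x^{2}y=U_x(U_xy)\in U_{I^{+}}I^{+}=0$, so $U_x^{2}=0$; the identity $U_{x^{2}}=U_x^{2}$ together with the nondegeneracy of $J$ then forces $x^{2}=0$. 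Analogous statements hold for $I^{-}$.

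Applying the same two-interpretation trick to the triple product $\{a,y,b\}$ with $a\in I^{+}$, $b\in I^{-}$, $y\in J$ shows $\{a,y,b\}\in I^{+}\cap I^{-}=0$, whence $U_{a,b}=0$. Combining these facts, $K=I^{+}+I^{-}$ is an ideal of $J$ viewed as a Jordan triple system, and every element of $K$ should turn out to be Jordan-nilpotent. Since nondegenerate Jordan algebras contain no nonzero nil ideals, this will force $K=0$, and hence $I^{+}=I^{-}=0$.

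The main obstacle is establishing the Jordan-nilpotence of a mixed element $x=a+b\in K$ with $a\in I^{+}$ and $b\in I^{-}$: the vanishings $U_a^{2}=U_b^{2}=U_{a,b}=0$ together with the fundamental formula should control the cross-terms $U_aU_b$ and $U_bU_a$, but the verification requires some care.
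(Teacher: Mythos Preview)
Your steps 1--3 are correct and match computations in the paper: the double-interpretation trick gives $U_{I^\sigma}I^\sigma=0$, hence $(x^\sigma)^2=0$, and $\{I^+,J,I^-\}=0$. The difficulty is exactly where you say it is, and it is a genuine gap rather than a routine verification. From $U_a^2=U_b^2=U_{a,b}=0$ you get $U_{a+b}=U_a+U_b$, so $U_{a+b}^n$ is a sum of alternating words $U_aU_bU_a\cdots$ and $U_bU_aU_b\cdots$; the fundamental formula rewrites these as $U_{U_ab}U_b\cdots$ etc., but $U_ab$ lands only in $I^+$ and $U_ba$ only in $I^-$, so the two-interpretation trick no longer forces anything to vanish. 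There is no mechanism in your setup that kills $(U_aU_b)^k$, and hence no reason for $a+b$ to be nilpotent. A second, smaller issue: the statement that nondegenerate Jordan algebras have no nonzero nil ideals refers to \emph{algebra} ideals, and you are working with a triple ideal; you would first need to pass to an algebra ideal (the paper does this via $I_{alg}=\{y\in I\mid y^2+y\circ I\subseteq I\}$, which satisfies $U_IJ\subseteq I_{alg}\subseteq I$).

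The paper avoids the alternating-word problem entirely by using the algebra structure rather than only the triple structure. After reducing to the case where $I$ is an algebra ideal, it shows directly that $x^+\circ x^-=0$: one computes $U_z(x^+\circ x^-)\in J\circ\{I^+,I^-,J\}+\{J,I^-,I^+\}=0$ for every $z\in J$ (using your step~3), so $x^+\circ x^-\in ann_J(I)\cap I=0$. Combined with $(x^\pm)^2=0$ this gives $x^2=0$ for every $x\in I$, i.e.\ $U_I1=0$ in a tight unital hull, and the annihilator characterization forces $I=0$. The missing idea in your outline is precisely this use of the circle product and the annihilator description for algebra ideals; purely triple-system arguments do not seem to close the gap.
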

\begin{proof}Following the proof of \cite[Proposition 2.4]{pi-ii}, the set $I_{alg} = \{y \in  I\mid y^2+y\circ I \subseteq I\}$ is an ideal of $J$ as an algebra which satisfies $U_IJ \subseteq I_{alg} \subseteq I$. In particular, $I_{alg}$ is still polarized, and $I_{alg} = 0$ if and only if $I =0$. Therefore we can assume that $I=I^+\oplus I^-$ is a polarized algebra ideal.

Take $x^\sigma \in I^\sigma$ for $\sigma = \pm$. Since $U_{I^\sigma}U_{I^\sigma}J = 0$, we have $U_{(x^\sigma)^2}J= U_{x^\sigma}^2J =0$, hence $(x^\sigma)^2 =0$ by nondegeneracy of $J$. Denoting $a = x^+\circ x^-$, for any $z\in J$ we have $U_za = U_z(x^+\circ x^-) = z\circ \{x^+,x^-,z\} + \{z^2,x^-,x^+\} \in J\circ \{I^+,I^-,J\} + \{J, I^-,I^+\} =0$, since $\{I^\sigma,I^{-\sigma},J\} \subseteq I^\sigma\cap I^{-\sigma} = 0$. Then $a \in ann_J(I)$ by the characterization of annihilators of ideals  mentioned in \ref{annihilator_def}. Since $I$ is an algebra ideal, $a\in I$, hence $a \in I\cap ann_J(I) =0$ by nondegeneracy of $J$. As a consequence of these equalities, we get $x^2= (x^++x^-)^2 = (x^+)^2+(x^-)^2+x^+\circ x^- = 0$, hence $I^2=0$.

Consider now a tight unital hull $J'$ of $J$ (a unital hull $J\triangleleft J' = J+\Phi 1$ which is tight over $J$, and therefore inherits nondegeneracy from $J$ \cite[0.16,0.17]{mcz}). Since $I$ is an ideal of $J$, it is also an ideal of $J'$, and the equality $I^2=0$ can be rewritten in $J'$ as $U_I1=0$, which again by the characterization of the annihilator mentioned in \ref{annihilator_def}, implies $1 \in ann_{J'}(I)$, and thus $I = U_1I =0$.
\end{proof}

\begin{apartado}\label{socle-def}The \emph{socle} $Soc(J)$ of a nondegenerate Jordan algebra $J$ is the sum of all minimal inner ideals of $J$. The socle of linear Jordan algebras has been studied by Osborn and Racine in \cite{orsocle}. For Jordan pairs over arbitrary rings of scalars, the socle has been thoroughly studied by Loos in \cite{loos-socle}. Our handling of the socle will rely on that reference. In particular, it is proved there that the socle of a nondegenerate Jordan pair $V$ is a direct sum of simple ideals of $V$, consists of regular elements, and satisfies the dcc on principal inner ideals \cite{loos-finiteness}.
Applying these assertions to the equality $V(Soc(J)) = Soc(V(J))$ for a Jordan algebra $J$, gives that $Soc(J)$
consists of regular elements and satisfies the dcc on principal inner ideals. On the other hand, if $I = (I^+,I^-)$ is a simple ideal of $V(J)$, then $V(I^+\cap I^-) \subseteq I$ is an ideal of $V(Soc(J))$, hence either $I = V(I^+\cap I^-)$, which gives $I ^+=I^-$, and $I= V(L)$ for the simple ideal $L = I^+=I^-$, or $I^+\cap I^- =0$. The latter case means that $I^++I^-$ is a polarized ideal of $J$, so it is the zero ideal by \ref{double ideals}. As a consequence, $Soc(J)$ is a direct sum of simple ideals coinciding with their own socles.

The elements of the socle of a nondegenerate Jordan algebra $J$ are exactly those whose local algebra $J_x$ has finite capacity \cite[Lemma 0.7(b)]{pi-i}. From \cite{loos-socle} we also obtain that a nondegenerate Jordan algebra $J$ satisfying the dcc on principal inner ideals (equivalently coinciding with its socle) also satisfies acc on the inner ideals $ann_J(x)$ for $x\in J$.
  \end{apartado}

 \begin{apartado}\label{locally artinian} Let $J$ be a Jordan algebra. We will say that $J$ is \emph{locally artinian} if  for any $x\in J$, $J_x$ is artinian. Note that if $J$ is locally artinian, then each local algebra $J_x$ has finite capacity, hence $J = Soc(J)$ by \ref{socle-def} (obviously, the analogous definition obtained for nondegenerate algebras by substituting  'artinian' by 'of finite capacity' in the definition of locally artinian gives nothing new, since as mentioned before, those are just the algebras that equal their socle).

It follows from the structure of inner ideals of Jordan
algebras having finite capacity \cite{mc-in} that simple Jordan
algebras with finite capacity (equivalently with dcc on principal
inner ideals) are either artinian or the Jordan algebra of a
nondegenerate quadratic form containing an infinite dimensional
totally isotropic vector subspace, in short, a \emph{nonartinian
quadratic factor}. Every element in a simple Jordan algebra with
dcc on principal inner ideals which is not a nonartinian quadratic
factor has finite uniform dimension (in the sense of \cite[p. 425]{fgm}, which we will explicitely recall later). Moreover such Jordan algebras
are locally artinian, hence in particular, for each idempotent $e$ in
$J$, the unital Jordan algebra $U_eJ = J_2(e) \cong J_e$ (see \cite[Example 1.12]{Subquot}) is  simple and artinian.
\end{apartado}

\begin{lema}\label{estructure loc artinian}For a nondegenerate Jordan algebra, the following facts are equivalent:
\begin{enumerate}
\item[(i)] $J$ is locally artinian,
\item[(ii)]$J = Soc(J)$, and $J$ does not contain ideals that are nonartinian quadratic factors.
\item[(iii)]$J$ is a direct sum of simple Jordan algebras coinciding with their socles, none of which is a nonartinian quadratic factor.

\end{enumerate}

\end{lema}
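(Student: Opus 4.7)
The plan is to prove the cyclic chain of implications \textup{(i)}$\Rightarrow$\textup{(ii)}$\Rightarrow$\textup{(iii)}$\Rightarrow$\textup{(i)}, relying on the structural facts gathered in \ref{socle-def} and \ref{locally artinian}.

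For \textup{(i)}$\Rightarrow$\textup{(ii)}: the characterization in \ref{socle-def} stating that socle elements are exactly those whose local algebra has finite capacity immediately yields $J=Soc(J)$ once every $J_x$ is artinian. To rule out nonartinian quadratic factor ideals, I would assume $I\triangleleft J$ is such an ideal. Being simple and an ideal of $J=Soc(J)$, $I$ must coincide with one of the simple summands of the decomposition of $Soc(J)$ given in \ref{socle-def}, so $J=I\oplus I'$ as ideals of $J$. Using that $U_xI'\subseteq I\cap I'=0$ for $x\in I$, one checks $Ker_J\, x = Ker_I\, x \oplus I'$, whence $J_x\cong I_x$. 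Taking $x$ to be the unit of the (unital) quadratic factor $I$, we get $Ker_I\, x=0$ and therefore $J_x\cong I_x\cong I$, which is not artinian, contradicting \textup{(i)}.

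For \textup{(ii)}$\Rightarrow$\textup{(iii)}: from $J=Soc(J)$ and \ref{socle-def}, $J$ is a direct sum of simple ideals, each coinciding with its own socle. The hypothesis prohibiting nonartinian quadratic factor ideals transfers to these simple summands since they are themselves ideals of $J$.

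For \textup{(iii)}$\Rightarrow$\textup{(i)}: write $J=\bigoplus_\alpha J_\alpha$. Each simple summand $J_\alpha$ coincides with its socle and is not a nonartinian quadratic factor, so by \ref{locally artinian} it is itself locally artinian. For $x\in J$, write $x=x_1+\cdots+x_n$ with nonzero $x_i\in J_{\alpha_i}$. The orthogonality of the summands gives $U_{x_i}J_{\alpha_j}=0$ and $\{x_i,J,x_j\}=0$ for $i\ne j$, so that $Ker_J\, x = \bigl(\bigoplus_{i=1}^n Ker_{J_{\alpha_i}}\, x_i\bigr)\oplus \bigoplus_{\alpha\notin\{\alpha_1,\dots,\alpha_n\}} J_\alpha$ and $J_x\cong \bigoplus_{i=1}^n (J_{\alpha_i})_{x_i}$, a finite direct sum of artinian Jordan algebras, hence artinian. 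The main point requiring attention is \textup{(i)}$\Rightarrow$\textup{(ii)}, specifically the verification that a nonartinian quadratic factor is not locally artinian: the key ingredient is the existence of an invertible element $v$ (the unit), which makes $U_v$ bijective on $I$ and identifies $I_v$ with the isotope $I^{(v)}\cong I$, thereby transferring failure of dcc on inner ideals to the local algebra.
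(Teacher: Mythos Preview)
Your proof is correct and follows essentially the same cyclic scheme as the paper's. The only notable difference is in \textup{(i)}$\Rightarrow$\textup{(ii)}: you pass through the socle decomposition $J=I\oplus I'$ to identify $J_e\cong I_e\cong I$, whereas the paper argues more directly that for the unit $e$ of the ideal $I$ one has $J_e\cong U_eJ=I$ (since $I$ is an inner ideal, $U_eJ\subseteq I$, and $U_eI=I$ gives the reverse inclusion), without invoking the direct-sum splitting. Both routes are valid; yours is slightly more explicit, the paper's slightly more economical.
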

\begin{proof}(i)$\Rightarrow$(ii) As noted before, for an algebra $J$, being locally artinian implies $J = Soc(J)$. Moreover, if $J$ contains an ideal $I$ which is a nonartinian quadratic factor, then $I$ is unital, and the local algebra of $J$ at the unit element $e$ of $I$ is the nonartinian algebra $J_e\cong I$.

(ii)$\Rightarrow$(iii) That $J$ is a direct sum of simple algebras coinciding with their socles is a consequence of the general result on the structure of the socle of a nondegenerate algebra \ref{socle-def}. That none of these summands is a nonartinian quadratic factor stems directly from the fact that local algebras of the summands are local algebras of $J$ itself, and local algebras at the unit element of a nonartinian quadratic factor are themselves nonartinian quadratic factors.

(iii)$\Rightarrow$(i) Local algebras of direct sums of simple Jordan algebras coinciding with their socles are Jordan algebras with finite capacity, so either they are artinian or they are nonartinian quadratic factors. This latter case can occur only if the local algebra is taken at an element of a direct summand which is itself a nonartinian quadratic factor, but this is ruled out by condition (iii).
\end{proof}

\section{Algebras of quotients}

As mentioned in the introduction, the study of algebras of quotients of Jordan algebras draws its inspiration from  associative theory  (see \cite{densos}). We recall next some basic notation from the latter and refer the reader to \cite{anillos,st} for basic results about algebras of
 quotients for associative algebras.

\begin{apartado}Let $L$ be a left ideal of an associative algebra $R$. Recall the usual notation
(for instance, see \cite{st})  $(L:a)$, with $a\in R$, for the
 set of all elements $x\in R$ such that $xa\in L$. A left ideal $L$
 of $R$ is \emph{dense} if $(L:a)b\neq0$
 for any $a\in R$ and any nonzero $b \in R$.
\end{apartado}

\begin{apartado} The associative agebras naturally arising in Jordan theory are associative
 envelopes of Jordan algebras, and therefore   carry an involution. That makes important
 to be able to extend involutions to their algebras of quotients. The fact that this is not always possible for the one-sided maximal algebras
 of quotients $Q_{max}^r(R)$ and $Q_{max}^l(R)$ leads to the  use of the maximal symmetric algebra of quotients
 $Q_\sigma(R)$ (see \cite{lanning}) as an adequate
 substitute of that algebra. Recall that $Q_\sigma(R)$  is
 the set of elements $q\in Q_{max}^r(R)$ for which there exists a
 dense left ideal $L$ of $R$ such that $Lq\subseteq R$
 (which up to a canonical isomorphism can be viewed  symmetrically as the set of all $q\in Q_{max}^l(R)$ for which there exists a
 dense right ideal $K$ of $R$ such that $qK\subseteq R$).  If $R$ has
 an involution,   $Q_\sigma(R)$ is the biggest subalgebra of $Q_{max}^r(R)$ and
 $Q_{max}^l(R)$ to which that involution extends.
 \end{apartado}

\begin{apartado}\label{new_ii} Let $J$ be a Jordan algebra, $K$ be  an inner ideal of $J$ and $a\in J$.
Following
  \cite{densos, esenciales} we set $$ (K:a)_L=\{x\in K\  \mid   x\circ a\in K  \},$$
$$ (K:a)=\{x\in K\  \mid U_ax,\ x\circ a\in K  \}.$$
It is straightforward to check that both $ (K:a)_L$ and $ (K:a)$ are  inner ideals of $J$ for
all $a\in J$, and that in addition, the containment $U_{(K:a)_L}K\subseteq (K:a)$   holds \cite[Lemma 1.2]{densos}. Given any  finite family of elements $a_1,\ \ldots, a_n\in
J$, we inductively define $(K:a_1:a_2:\ldots:a_n)=((K:a_1:\ldots
a_{n-1}): a_n)$.\end{apartado}

\begin{apartado}\label{nii} An inner ideal $K$ of $J$ is said to be \emph{dense} if
$U_c(K:a_1:a_2:\ldots:a_n)\neq0$ for any finite collection of
elements  $a_1,\ \ldots, a_n\in J$ and any $0\neq c\in J$. Different
characterizations of density are given in \cite[Proposition 1.9]{densos}. Recall that
if $K$ is a dense inner ideal of $J$ so are the inner ideals  $(K:a)$
for all $a\in J$ \cite[Lemma 1.8]{densos}.
\end{apartado}

\begin{apartado}\label{denominadores} Let $\widetilde{J}$ be a Jordan algebra,  $J$  be  a subalgebra of
$\widetilde{J}$, and   $\widetilde{a}\in\widetilde{J}$. Recall from
\cite{pi-ii} that an element $x\in J$ is a \emph{$J$-denominator} of
$\widetilde{a}$ if the following multiplications take
$\widetilde{a}$
 back into $J$:

 \centerline{ \begin{tabular}{lll}
   (Di)\ $U_x\widetilde{a}$ &  (Dii)\ $U_{\widetilde{a}}x$ &  %
   (Diii)\ $U_{\widetilde{a}}U_x\widehat{J}$ \\
   (Diii')\ $U_xU_{\widetilde{a}}\widehat{J}$  & %
   (Div)\ $V_{x,\widetilde{a}} \widehat{J}$  & (Div')\ $V_{ \widetilde{a},x} \widehat{J}$ \\
 \end{tabular}}

 \noindent We will denote the set of $J$-denominators of $\widetilde{a}$ by
 ${\cal D}_J(\widetilde{a})$. It has been proved in \cite{pi-ii}
 that ${\cal D}_J(\widetilde{a})$ is an inner ideal of $J$. Recall also from  \cite[p. 410]{fgm} that any $x\in J$ satisfying (Di),
 (Dii), (Diii) and (Div) belongs to ${\cal D}_J(\widetilde{a})$. The following procedure for obtaining denominators is given in \cite[Lemma 2.2]{fgm}, and has the advantage of being "context free", that is of not depending of the overalgebra $\widetilde{J}$: for any $x\in J$, the containments $x\circ \widetilde{a}, U_x\widetilde{a}\in J$ imply $x^4\in {\cal D}_J(\widetilde{a})$.
\end{apartado}

\begin{apartado}\label{aq} Let $J$ be a subalgebra of a Jordan algebra $Q$. Following \cite{densos}, we say that $Q$ is a \emph{general algebra of quotients of $J$} if the
following conditions hold:
\begin{enumerate}\item[(AQ1)] ${\cal D}_J(q)$ is a dense
inner ideal of $J$ for all $q\in Q$. \item[(AQ2)] $U_q {\cal
D}_J(q)\neq0$ for any nonzero $q\in
Q$.\end{enumerate}

Note that any nondegenerate Jordan algebra is its own algebra of
quotients. Conversely any Jordan algebra having an algebra of
quotients is nondegenerate.

A different, though closely related approach to algebras of quotients was carried out in \cite{esenciales} by using essential inner ideals as sets of denominators. That second approach, which in fact motivated and inspired the one in \cite{densos} as well as some other treatments of algebras of quotients in Jordan algebras (see the references in \cite{densos}), has the advantage that checking essentiality is significantly simpler than checking density. However, for that choice to be feasible, the additional condition of strong nonsingularity, introduced in \cite{esenciales}, is needed. We will  comment on that at the end of the present section.\end{apartado}

\begin{apartado}\label{def}  An algebra  of quotients $Q$ of a Jordan algebra $J$
is said to be a \emph{maximal  algebra of quotients} of $J$ if for
any other algebra of quotients $Q'$ of $J$ there exists a
homomorphism $\alpha: Q' \to Q$ whose restriction to $J$ is the
identity map.

If there exists, a  maximal algebra of quotients of a nondegenerate Jordan algebra is easily seen to be unique up to an isomorphism fixing the subalgebra $J$. The existence of maximal algebras of quotients
of nondegenerate Jordan algebras was proved in \cite[Theorem 5.8]{densos}. We denote by $Q_{max}(J)$ the maximal algebra of quotients of a nondegenerate Jordan algebra $J$.
\end{apartado}

\begin{teorema} Any nondegenerate Jordan algebra $J$
has a maximal algebra of quotients $Q_{max}(J)$.
\end{teorema}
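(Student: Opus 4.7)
The plan is to extend the Lambek-Utumi construction of the maximal right algebra of quotients of an associative ring to the Jordan setting, using dense inner ideals as the analogue of dense one-sided ideals and the denominator ideals ${\cal D}_J(\widetilde{a})$ in the sense of \ref{denominadores}. First I would introduce a candidate universe consisting of equivalence classes of admissible \emph{partial actions}: triples formed by a dense inner ideal $K$ of $J$ together with suitable maps representing the prospective operations $x \mapsto U_q x$, $x \mapsto x \circ q$ and $(x, y) \mapsto \{q, x, y\}$, which satisfy the consequences of the Jordan identities QJn that must hold whenever $q$ is an element of an overalgebra of $J$. Two partial actions would be declared equivalent if they agree on a common dense inner ideal contained in both their domains; the stability of density under intersections of finitely many dense inner ideals, together with the density of $(K:a)$ whenever $K$ is dense (\ref{nii}), ensures that this relation is indeed an equivalence and that the operations defined below are well defined on classes.

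On the resulting universe $Q$ I would define Jordan operations inherited pointwise from the partial actions: to compute $U_q r$ one restricts $q$ and $r$ to a common dense inner ideal of simultaneous denominators and composes their partial multiplications. The key technical step is to verify that the resulting partial action is again defined on a dense inner ideal, which follows from the containment $U_{(K:a)_L}K \subseteq (K:a)$ recalled in \ref{new_ii} together with the iterated shrinking of dense inner ideals by denominators. The Jordan identities in $Q$ then reduce to the corresponding identities in $J$: any candidate identity restricts to a dense inner ideal of $J$ where the identity holds by assumption, and the density of the restriction, combined with nondegeneracy of $J$, forces the identity throughout $Q$.

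The embedding of $J$ into $Q$ sends $a \in J$ to the class of the partial action with domain $J$ whose multiplications are the ambient Jordan operations; the axioms (AQ1) and (AQ2) of \ref{aq} then follow from the construction itself, since the denominator ideal of any class contains the domain of any representative, and a nonzero class has some representative with a nonzero multiplication on its dense inner ideal of definition. The maximality property is verified directly: given any other algebra of quotients $Q'$ of $J$, each $q' \in Q'$ provides a dense inner ideal of denominators ${\cal D}_J(q')$ on which the operations of $Q'$ land in $J$, yielding an admissible partial action and hence a canonical map $\alpha: Q' \to Q$ fixing $J$; axiom (AQ2) applied in $Q'$ ensures $\alpha$ is injective, and tracing the construction shows that it is a Jordan homomorphism.

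The main technical obstacle is the bookkeeping needed to verify that composite partial actions remain defined on dense inner ideals and satisfy the full list of Jordan identities (QJn). In contrast with the associative case, where a single dense one-sided ideal suffices for each computation, the quadratic and trilinear character of the Jordan operations forces one to intersect several dense inner ideals and iteratively shrink them via the denominator operation $K \mapsto (K:a_1:\cdots:a_n)$ before any identity can be checked pointwise. The persistence of density under these operations (\ref{nii}) is the crucial ingredient that makes the whole construction go through.
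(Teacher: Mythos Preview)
The paper does not itself prove this theorem: it records the result as \cite[Theorem 5.8]{densos} and points to \cite[Theorems 3.11 and 4.10]{densos} for the explicit description of $Q_{max}(J)$. From the way those results are invoked later in the paper one can see that the argument in \cite{densos} is \emph{not} a uniform Lambek--Utumi construction. It proceeds through structure theory: for a PI-less strongly prime $J$ with $\ast$-tight associative envelope $R$ one has $Q_{max}(J)=H_0(Q_\sigma(R),\ast)$ (this is \cite[Theorem 4.10]{densos}, used in the proof of \ref{casofprimo}); for algebras with nonzero PI-part the maximal quotient is the extended-centroid localization $\Gamma^{-1}(J)J$ (used via \cite{esenciales} in the same proof); and the general nondegenerate case is assembled from these through subdirect decompositions (cf.\ \cite[Remark 5.7, Lemma 5.6]{densos} as used in \ref{qideales} and \ref{caso general}).

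Your outline is the natural Jordan analogue of Utumi's construction, and the bookkeeping with dense inner ideals and the shrinking operation $K\mapsto (K:a_1:\cdots:a_n)$ is set up correctly. The genuine gap is the sentence ``the Jordan identities in $Q$ then reduce to the corresponding identities in $J$''. In the associative case this step is trivial because an element of the Utumi quotient is a module map on a dense right ideal and the product is composition, so associativity is automatic. In the Jordan case your elements are packages of \emph{several} maps (the prospective $U_q$, $V_q$, $\{q,-,-\}$), and the product $U_qp$ must itself be such a package; checking, say, the fundamental formula $U_{U_qp}=U_qU_pU_q$ or the identity QJ16 for elements $p,q\in Q$ is a statement about equality of iterated composites of these maps, not an identity that can be ``restricted to a dense inner ideal of $J$'' and read off from the axioms of $J$. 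Nothing in your data encodes how two abstract partial actions interact beyond their effect on $J$, so there is no mechanism to force the higher identities among elements of $Q$. This obstruction is precisely why \cite{densos} takes the structure-theoretic detour through associative envelopes, where the Jordan identities come for free from an ambient associative algebra, rather than building $Q$ intrinsically. Your sketch does not address this point, and without it the construction does not yield a Jordan algebra.
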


 We refer to \cite[Theorem  3.11 and Theorem 4.10]{densos} for the explicit
description of  the maximal algebra  of quotients of  a
nondegenerate Jordan algebra.  We also recall the straightforward fact that maximal algebras of quotients of nondegenerate Jordan algebras are unital  \cite[Remark 5.9]{densos}.

As mentioned in the introduction, and as was the case in the associative setting,  the study of general algebras of quotients of Jordan algebras was originated in the study of algebras of fractions. We next recall some of the facts concerning these.

\begin{apartado} A nonempty subset $S\subseteq J$ is a \emph{monad} if $U_st$ and $s^2$ are in $S$ for all $s,t\in S$. A
subalgebra $J$ of a unital Jordan algebra $Q$ is an \emph{$S$-order in $Q$} or equivalently $Q$ is a \emph{$S$-algebra of quotients, or an algebra of fractions (of $J$ relative to $S$)} if:
\begin{enumerate}
\item[(ClQ1)] every element $s\in S$ is invertible in $Q$.
\item[(ClQ2)] each $q\in Q$ has a $J$-denominator in $S$.
\item[(ClQ3)] for all $s,t\in S$, $U_sS\cap U_tS\neq \emptyset$.
\end{enumerate}
\end{apartado}

\begin{apartado} An element $s$ of a Jordan algebra $J$ is said to be
\emph{injective} if the mapping $U_s$ is injective over $J$.
Following \cite{fgm} we will denote by $Inj(J)$ the set of injective
elements of $J$. \end{apartado}

\begin{apartado}\label{classical_order}
A Jordan algebra $Q$ containing $J$ as a subalgebra
is a \emph{classical algebra of quotients of $J$} or an \emph{algebra of fractions} of $J$ (and $J$ is a
\emph{classical order in $Q$}) if all injective elements of $J$ are
invertible in $Q$ and for all $q\in Q$, ${\cal D}_J(q)\cap
Inj(J)\neq\emptyset$.  In other words, classical algebras of quotients are $S$-algebras of quotients (or  algebras of fractions relative to $S$) for $S=Inj(J)$. Moreover, they are general  algebras
of quotients (in the sense of \ref{aq}, as usual) \cite[Examples 2.3.5]{densos}.
\end{apartado}

\begin{apartado}\label{InTEx} The proximity of a nondegenerate algebra and its algebras of quotients can be expressed through the following notion introduced in \cite[p. 414]{fgm}, which includes that of classical algebras of quotients as a particular case: Let $J \leq \widetilde{J}$  be Jordan algebras. An over-algebra  $ \widetilde{J}$ is said to be an \emph{innerly tight extension} of  $J  $  if
\begin{enumerate}
\item[$\bullet$] $U_{\widetilde{a}}J\cap J\neq 0$ for any $0\neq  \widetilde{a}\in  \widetilde{J}$, and
\item[$\bullet$] ${\cal D}_J(\widetilde{a})$ is an essential inner ideal of $J$ for any $\widetilde{a}\in \widetilde{J}$.
\end{enumerate}
By \cite[Lemma 2.4]{densos}, an algebra of quotients of a nondegenerate Jordan algebra is an innerly tight extension. As for the reciprocal, a partial result follows from  \cite[Proposition 2.10]{fgm}:  unital innerly tight extensions with finite capacity are classical algebras of quotients.
\end{apartado}

\begin{apartado}\label{goldie stuff}Let $J$ be a Jordan algebra. We follow \cite{fgm} for the next definitions that will be used below, in the statement of the Goldie theorem for Jordan algebras.

-- For a subset $X\subseteq J$,
  denote by $[X]_{J}$ the inner ideal of $J$ generated by $X$.
 A family $\{K_i\}_{i\in I}$ of nonzero inner ideals of $J$ \emph{forms
 a direct sum} if $K_i\cap [\sum_{j\neq i} K_j]_{J}=0$ for each $i\in I$. Following \cite[p. 426]{fgm}, we say that a Jordan algebra $J$ satisfies the $acc(\oplus)$ if it does not have infinite families of nonzero inner ideals that form a direct sum.  In analogy with the corresponding notion in associative theory (see \cite[p. 361]{anillos} or, under the name of finite right rank, \cite[II.2]{st}),  the
 \emph{uniform (or Goldie) dimension}
 $ udim(J)$ of a Jordan algebra $J$ is defined as the supremum of the $n\geq1$ such that there are $K_1, \ldots, K_n$ nonzero inner ideals of $J$ which form a direct sum (including the possibility that the set of such numbers $n$ is not bounded, in which case $J$ will be said to have \emph{infinite uniform dimension}).

 As for its associative counterpart, and in accordance with the notation used in \cite[Lemma 5.4]{fgm}, for an associative algebra $R$, we will denote respectively by $u_ldim(R)$ and $u_rdim(R)$ the left and right uniform dimensions of $R$. If $x\in R$ we put $u_ldim(x)= u_ldim(R_x)$ and $u_rdim(x) = u_rdim(R_x)$. (Note that $u_ldim(R_x)$ coincides with the uniform dimension of $Rx$ as a left $R$-module, and similarly with the "right" instead of the "left" version.)

If a nondegenerate Jordan algebra $J$ satisfies the $acc(\oplus)$ if and only if it has finite uniform dimension \cite[Proposition 7.6]{fgm}.

--The \emph{singular
  set} of a Jordan algebra $J$ is
 $$\Theta(J)=\{x\in J\mid ann_J(x)\ \hbox{is an  essential inner ideal of $J$}\}.$$

 If $J$ is
 nondegenerate then
 $\Theta(J)$ is an ideal of $J$ \cite[Theorem 6.1]{fgm}, and $J$ is \emph{nonsingular} if $\Theta(J)=0$.

-- A nonzero element $u\in J$ is \emph{uniform} if
$ann_J(u)=ann_J(x)$ for any nonzero $0\neq x\in U_u\widehat{J}$. It is straightforward that
$ann_J(u)\subseteq ann_J(x)$ for every $ x\in U_u\widehat{J}$, hence every nonzero element $u
\in J$ with maximal annihilator is uniform. If $J$ satisfies the acc on annihilators of  its elements, every nonzero inner ideal of $J$ contains a uniform element \cite[p. 55]{fg2}. Uniform elements of nondegenerate Jordan algebras can be characterized through their local algebras. Indeed, by \cite[Proposition 8.4]{fgm},
 a nonzero element of a nondegenerate Jordan algebra is uniform if and only if the local algebra at that element is a Jordan domain.
\end{apartado}

  \begin{apartado}
 A nondegenerate Jordan algebra is \emph{Goldie} if it satisfies the acc on annihilators and has no
 infinite direct sum
 of inner ideals.  Different equivalent characterizations of Goldie Jordan algebras are given in  \cite[Theorem 9.3]{fgm}, among them we select the following:
 \end{apartado}

\begin{teorema}\label{goldieThm} {\rm\cite[Theorem 9.3]{fgm}} For a Jordan algebra $J$ the following conditions are equivalent:
\begin{enumerate}
\item[(i)]  $J$ is a classical order in a nondegenerate artinian Jordan algebra $Q$.
\item[(ii)] $J$ is nondegenerate, satisfies the acc on the annihilators of its elements and has finite uniform dimension.
\item[(iii)] $J$ is nondegenerate, any nonzero ideal of $J$ contains a uniform element, and $J$ has finite uniform dimension.
\item[(iv)] $J$ is nondegenerate, nonsingular and has finite uniform dimension.
\end{enumerate}
Moreover, $Q$ is simple if and only if $J$ is strongly prime.
\end{teorema}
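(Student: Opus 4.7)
The plan is to establish the cycle $(i)\Rightarrow(ii)\Rightarrow(iii)\Rightarrow(iv)\Rightarrow(i)$ and then the \emph{moreover} clause. Three of the four implications should fall out of what has been recalled in the preliminaries, and I expect the main technical obstacle to be $(iv)\Rightarrow(i)$, where one must produce enough injective elements and then perform a quadratic Ore-type construction to build $Q$.

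For $(i)\Rightarrow(ii)$, \ref{InTEx} says a classical algebra of quotients is an innerly tight extension. Since $Q$ is nondegenerate artinian it has finite capacity and hence finite uniform dimension; any direct sum of nonzero inner ideals of $J$ stays direct in $Q$ (the property $U_{\widetilde{a}}J\cap J\neq 0$ certifies nonvanishing of each summand in $Q$), so $J$ inherits finite uniform dimension. For acc on annihilators, $Q$ itself satisfies acc on annihilators of single elements by \ref{socle-def}, and a strict comparison in $J$ lifts to one in $Q$ via the denominator condition that each element of $Q$ admits $J$-denominators. The implication $(ii)\Rightarrow(iii)$ is then immediate: under acc on annihilators every nonzero inner ideal, and in particular every nonzero ideal, contains a uniform element by the fact recorded in \ref{goldie stuff}.

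For $(iii)\Rightarrow(iv)$ I would argue by contradiction. If $\Theta(J)\neq 0$, then by $(iii)$ it contains a uniform element $u$, and essentiality of $ann_J(u)$ together with nondegeneracy provides a nonzero $x\in ann_J(u)\cap U_u\widehat{J}$. Uniformity of $u$ then forces $ann_J(x)=ann_J(u)\ni x$, so $x\in ann_J(x)$, which combined with the Jordan-domain structure of $J_u$ given by \ref{goldie stuff} and the nondegeneracy of $J$ yields the desired contradiction (the element $x=U_ub$ mapping to zero in the domain $J_u$ while $\bar u\neq 0$).

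The main work lies in $(iv)\Rightarrow(i)$. The plan is first to combine finite uniform dimension with nonsingularity to show that every essential inner ideal of $J$ contains an injective element: nonsingularity rules out essential annihilators, and the local algebra at a uniform element, being a Jordan domain of finite uniform dimension, supplies injective candidates whose injectivity is preserved in $J$ thanks to $\Theta(J)=0$. Second, one decomposes $J$ via a maximal uniform direct sum, passes to the double Jordan pair $V(J)$, and performs a quadratic Jordan Ore-type localization on each summand, using that a Jordan domain of finite uniform dimension embeds as a classical order in a Jordan division algebra; reassembling these pieces produces a nondegenerate Jordan algebra $Q$ of finite capacity, hence artinian by \ref{locally artinian}, into which $J$ sits as a classical order with the denominator condition verified summandwise. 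For the \emph{moreover} clause, the innerly tight extension property pairs nonzero ideals of $J$ and $Q$: strong primeness of $J$ prevents $Q$ from splitting as a nontrivial direct sum of simple artinian summands, so $Q$ is simple, while conversely simplicity of $Q$ contracts back to yield primeness of $J$.
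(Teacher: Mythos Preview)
The paper does not prove this theorem at all: it is quoted verbatim from \cite[Theorem 9.3]{fgm} as background, with no proof supplied here. So there is no ``paper's own proof'' to compare your sketch against; the relevant comparison would be with the original argument in \cite{fgm}, which is long and structural.

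Your outline of the easy implications $(i)\Rightarrow(ii)\Rightarrow(iii)$ is broadly correct, and the idea for $(iii)\Rightarrow(iv)$ (pick a uniform element in $\Theta(J)$ and derive a contradiction from essentiality of its annihilator) points in the right direction, though the final contradiction you sketch is not cleanly justified: from $x\in ann_J(u)\cap U_u\widehat J$ you get $U_ux=0$, but turning this into ``$\bar u\neq 0$ maps to zero in the domain $J_u$'' needs more care than you give it.

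The real gap is $(iv)\Rightarrow(i)$. Your plan---find injective elements in essential inner ideals, take a maximal uniform direct sum, pass to $V(J)$, localize each summand as a Jordan domain of finite uniform dimension into a division algebra, and reassemble---glosses over essentially all of the difficulty. The statement ``a Jordan domain of finite uniform dimension embeds as a classical order in a Jordan division algebra'' is already a deep theorem (this is where Zelmanov's work \cite{z-goldie-1,z-goldie-2} enters), and the actual proof in \cite{fgm} does not proceed by a summandwise Ore construction in $V(J)$: it goes through the dichotomy PI versus PI-less, uses the structure theory of strongly prime Jordan algebras of hermitian type, passes to $\ast$-tight associative envelopes, applies associative Goldie theory there, and then pulls the classical quotient ring back to the Jordan side via ample subspaces of symmetric elements. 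None of that machinery is hinted at in your sketch, and the ``reassembling'' step you mention is not a routine matter either. As written, your $(iv)\Rightarrow(i)$ is a statement of hope rather than a proof strategy.
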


  The study in \cite{fgm} of Jordan algebras  that are classical orders in semisimple artinian algebras (that is nondegenerate Goldie Jordan algebras) extends to the study of a wider class of algebras, those which are classical orders in  nondegenerate unital Jordan algebras  of finite capacity. The main result on those is the following:

\begin{teorema} {\rm\cite[Theorem 10.2]{fgm}} A Jordan algebra $J$ is a classical order
in a nondegenerate unital Jordan algebra $Q$ with finite capacity if
and only if it is nondegenerate and satisfies the following
property: An inner ideal $K$ of $J$ is essential if and only if $K$
contains an injective element. Moreover, $Q$ is simple if and only
if $J$ is prime.\end{teorema}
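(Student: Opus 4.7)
The plan is to upgrade the Jordan Goldie Theorem \ref{goldieThm} by weakening the artinian hypothesis on the over-algebra to finite capacity. By \ref{locally artinian}, the new simple algebras that must be accommodated beyond the Goldie case are precisely the nonartinian quadratic factors, so the extra work in both directions concentrates on handling these.

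For the forward direction, assume $J$ is a classical order in a nondegenerate unital $Q$ of finite capacity. By \ref{socle-def}, $Q$ decomposes as a finite direct sum of simple unital ideals $Q = Q_1 \oplus \cdots \oplus Q_n$, each of finite capacity and hence, by \ref{locally artinian}, either artinian or a nonartinian quadratic factor. Using the $J$-denominator property for elements of $Q$, I would verify that the projections $J_i$ of $J$ along the decomposition are classical orders in the respective $Q_i$, reducing the essentiality characterization for $J$ to the same characterization in each summand. On artinian summands this follows directly from Theorem \ref{goldieThm}; on quadratic-factor summands it reduces to the explicit inner-ideal structure of Jordan algebras of nondegenerate quadratic forms, where the proper nonzero inner ideals are either totally isotropic subspaces of the vector part (which contain no injective element and intersect trivially any complementary anisotropic line, so are not essential) or are generated by anisotropic (equivalently, injective) vectors. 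Assembling the components yields the characterization for $J$.

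For the converse, assume $J$ is nondegenerate and that an inner ideal is essential if and only if it contains an injective element. I would first deduce that $J$ has finite uniform dimension: an infinite direct sum of nonzero inner ideals could be enlarged to an essential inner ideal constructed so as to admit no injective element, contradicting the hypothesis. Combined with the essentiality property, this yields enough Ore-type information to build a classical algebra of quotients $Q$ of $J$ via the Mart\'{\i}nez--Bowling--McCrimmon construction, with $\text{Inj}(J)$ playing the role of the denominator monad. Finite capacity of $Q$ then follows from the fact that $J$ is an innerly tight extension of $Q$ (\ref{InTEx}): the finite uniform dimension of $J$, together with the essentiality characterization, transfers to an orthogonal decomposition of $1_Q$ into finitely many division idempotents by a standard idempotent-lifting and Peirce analysis. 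The last assertion (simplicity of $Q$ versus primeness of $J$) is a routine consequence of the bijective correspondence between ideals of $J$ and ideals of $Q$ under the classical order relation.

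The principal obstacle is the quadratic-factor case, which lies outside the Goldie framework. In the forward direction, verifying the essentiality characterization on quadratic factor summands requires invoking the explicit structure of Jordan algebras of nondegenerate quadratic forms, rather than a general Goldie-type argument. In the converse, the over-algebra $Q$ produced by the localization may have quadratic factor components, so extracting finite capacity of $Q$ from finite uniform dimension of $J$ demands a finer idempotent analysis than the purely semisimple artinian case of Theorem \ref{goldieThm}.
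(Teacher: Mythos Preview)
The paper does not prove this theorem; it is quoted verbatim from \cite[Theorem 10.2]{fgm} and used as background, so there is no proof here to compare against. Any assessment of your outline has to be against the original argument in \cite{fgm}, not against anything in the present paper.

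That said, your converse contains a genuine error. You claim that the essentiality characterization forces $J$ to have finite uniform dimension, and then you intend to feed this into a Goldie-type construction. But this is false: a nonartinian quadratic factor $Q$ is itself a nondegenerate unital algebra of finite capacity (capacity two), hence is trivially a classical order in itself and therefore, by the forward direction, satisfies the essentiality characterization. Yet $Q$ has infinite uniform dimension, since it contains an infinite-dimensional totally isotropic subspace giving rise to an infinite direct sum of nonzero inner ideals (cf.\ \ref{locally artinian}). So the implication ``essentiality characterization $\Rightarrow$ finite uniform dimension'' fails exactly on the algebras that distinguish the LC case from the Goldie case. Your proposed mechanism for producing $Q$ and proving it has finite capacity therefore cannot work as stated; the actual argument in \cite{fgm} must handle the possibility that $J$ has infinite uniform dimension from the outset, and the passage to an over-algebra of finite capacity cannot go through a uniform-dimension bound on $J$.

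In the forward direction your plan is more plausible, but the reduction to summands is not as clean as you suggest: one must check that the componentwise projections of $J$ are again classical orders, and that essentiality of inner ideals in $J$ decomposes componentwise, neither of which is entirely automatic.
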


\begin{apartado}\label{lc def} Following \cite{lc, lc-pr}, a Jordan algebra $J$ satisfying the above equivalent properties
will be called a  \emph{Lesieur-Croisot Jordan algebra} or an  \emph{LC Jordan algebra}, for short.

 The set $LC(J)$ of elements $x\in J$ at which
 the local algebra $J_x$ is LC will be one of our main concerns in our development of a local Goldie theory for Jordan algebras based on the ideas of \cite{fg1} and \cite{fg2}. We recall here that  if $J$ is nondegenerate, this set is an ideal of $J$ \cite[Theorem 5.13]{lc}. The next to sections will be devoted to the study of that ideal.
\end{apartado}

\begin{apartado}\label{strongly nonsingular ideal}We have already mentioned in \ref{aq} the possibility of developing a version of the theory of algebras of quotients based on essential inner ideals instead of on dense inner ideals following \cite{esenciales}. That requires the following version of nonsingularity introduced in \cite{esenciales}: a Jordan algebra $J$ is \emph{strongly nonsingular} if for any essential inner ideal $K$ of $J$, and any $a \in J$, the equality $U_aK=0$ implies $a=0$. Following \cite{tesis}, an element $z\in J$ will be called an \emph{essential zero divisor} if there exists an essential inner ideal $K$ of $J$ such that $U_zK=0$. Therefore, a Jordan algebra is strongly nonsingular if it does not have nonzero essential zero divisors. In that case, the theories developed in \cite{densos} and \cite{esenciales} coincide since by \cite[Lemma 1.18 (b)]{densos}, a Jordan algebra $J$ is strongly nonsingular if and only if any essential inner ideal of $J$ is dense.

Essential zero divisors can be gathered in an analogue of the singular ideal \ref{goldie stuff} whose vanishing will imply strong nonsingularity. Again following \cite{tesis}, we denote by ${\cal Z}_{ess}(J)$ the linear span of all essential zero divisors of $J$.
\end{apartado}

\begin{proposicion}\label{strongzeta}For any nondegenerate Jordan algebra $J$, the set
${\cal Z}_{ess}(J)$ is an ideal.
\end{proposicion}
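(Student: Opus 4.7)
The plan is to show that ${\cal Z}_{ess}(J)$, which is a $\Phi$-submodule by construction, is closed under the remaining operations defining a Jordan ideal: $U_{z}\widehat{J}\subseteq {\cal Z}_{ess}(J)$ for $z\in {\cal Z}_{ess}(J)$, together with $U_a z,\ a\circ z\in {\cal Z}_{ess}(J)$ for $a\in J$ and $z\in {\cal Z}_{ess}(J)$.

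I would first fix a single essential zero divisor $z$ with witness $K$ (so $U_z K=0$ with $K$ an essential inner ideal of $J$) and verify each operation on it. The quick case is $U_z b$ for $b\in \widehat{J}$: the fundamental formula $U_{U_z b}=U_z U_b U_z$ gives $U_{U_z b}K\subseteq U_z U_b(U_z K)=0$, so $U_z b$ is itself an essential zero divisor with the same witness $K$. For the remaining two operations the key is to produce, from $K$ and $a\in J$, an essential inner ideal $K'$ satisfying $U_a K'\subseteq K$ and $a\circ K'\subseteq K$. The natural candidate is $(K:a)$ from \ref{new_ii}: by construction it is an inner ideal contained in $K$ that satisfies both required containments. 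Granting its essentiality, the fundamental formula $U_{U_a z}=U_a U_z U_a$ yields $U_{U_a z}(K:a)\subseteq U_a U_z K=0$, so $U_a z$ is an essential zero divisor; and $a\circ z$ is handled analogously via a Jordan identity expressing $U_{a\circ z}$ in terms of $U_a$, $U_z$ and $V_{a,z}$, again witnessed through $(K:a)$.

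To extend closure under $U_{(\cdot)}\widehat{J}$ from individual essential zero divisors to their $\Phi$-linear span, and to verify $\{{\cal Z}_{ess}(J),J,\widehat{J}\}\subseteq {\cal Z}_{ess}(J)$, I would expand $U_{z_1+z_2}b = U_{z_1}b + \{z_1,b,z_2\} + U_{z_2}b$ and reduce matters to showing the cross terms $\{z_1,b,z_2\}$ lie in ${\cal Z}_{ess}(J)$ when $z_1,z_2$ are essential zero divisors with respective witnesses $K_1, K_2$. Taking the common refinement $K_1\cap K_2$ (essential as a finite intersection of essential inner ideals) and applying the $(K:a)$-style construction to it produces an essential inner ideal annihilating $U_{\{z_1,b,z_2\}}$, placing this cross term in ${\cal Z}_{ess}(J)$.

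The main obstacle is the essentiality of $(K:a)$ when $K$ is essential. In a general Jordan algebra this preservation can fail, and its failure is precisely what motivates the notion of strong nonsingularity recalled in \ref{strongly nonsingular ideal}. In the nondegenerate setting, however, it can be obtained by an iterative argument: given any nonzero inner ideal $I$ of $J$, use essentiality of $K$ against inner ideals built successively from $I$ and $a$ (together with nondegeneracy, which prevents these iterates from collapsing) to produce a nonzero element of $I\cap(K:a)$. This technical step is the heart of the argument; once it is in place, the three closure properties above combine to show that ${\cal Z}_{ess}(J)$ is an ideal of $J$.
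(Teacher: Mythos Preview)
Your approach is broadly correct and uses the same key ingredient as the paper---namely, that $(K:a)$ serves as a witness for $U_a z$ (and its variants) whenever $K$ witnesses $z$---but the paper's argument is organized more efficiently via structural transformations. Rather than verifying closure under each ideal operation separately (and then having to control cross terms $\{z_1,b,z_2\}$ by intersecting witnesses), the paper first notes that the principal inner ideal $(z)=\Phi z+U_z\widehat J$ already consists of essential zero divisors, and then invokes the standard fact (\cite[Lemma 4.4]{pi-ii}, or \cite[Proposition 4.1.6]{jac-struc} applied in $\widehat J$) that the ideal generated by a set is reached by applying compositions of maps of the form $U_a$ and $B_a=\mathrm{Id}-V_a+U_a$. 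Since both are structural, $U_{U_az}=U_aU_zU_a$ and $U_{B_az}=B_aU_zB_a$, and one checks directly that $U_{U_az}(K:a)=0$ and $U_{B_az}(K:a)=0$. Thus $U_az$ and $B_az$ are again essential zero divisors with witness $(K:a)$, and this disposes of the entire ideal generated by $z$ without ever analyzing sums or triple products of essential zero divisors.

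One correction: your identification of the essentiality of $(K:a)$ with the obstruction underlying strong nonsingularity is off. Strong nonsingularity is the condition that $U_aK=0$ for essential $K$ forces $a=0$; it is not about whether $(K:a)$ inherits essentiality from $K$. The latter preservation is part of the basic machinery of \cite{esenciales,densos} and the paper uses it without further comment. Your proposed iterative argument for this step is fine, but it is not the crux of the proof---the real economy lies in the reduction to the two structural maps $U_a$ and $B_a$, which sidesteps the linearization and cross-term bookkeeping your outline would require.
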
\begin{proof}We first prove that if $z$ is an essential zero divisor, then the inner ideal $(z) =\Phi z+ U_z\hat J$ generated by $z$ is contained in ${\cal Z}_{ess}(J)$. Indeed, if $U_zK =0$ for some essential inner ideal $K$, and $u = \alpha z+ U_za\in (z)$,  we have $U_uK \subseteq   \alpha^2U_zK+ \alpha
\{z,K,U_za\}+U_zU_aU_zK  = 0 + \{U_zK,a,z\}+ 0 =0$, hence $u \in {\cal Z}_{ess}(J)$.

Recall that a mapping $S:J\rightarrow J$ is a structural transformation if there exists $S^*:J\rightarrow J$ such that $U_{Sx} = SU_xS^\ast$ for any $x\in J$. For $x\in J$, the mappings $U_x$ are structural transformations, as are the mappings $B_x = Id-V_x+U_x$ (equal to $U_{1-x}$ in a unital $J$). Since $V_x = Id+B_x+U_x$, and the mapping $a\mapsto \{x,y,a\}$ is $V_xV_y-U_{x\circ y}+U_x+U_y$,
\cite[Lemma 4.4]{pi-ii} (or from \cite[Proposition 4.1.6]{jac-struc} applied to the unital $\hat J$) implies that the ideal generated by ${\cal Z}_{ess}(J)$ consists of the elements which are sums of elements of the form $W_1\cdots W_nz$, for $W_i= U_{a_i}$ or $B_{a_i}$, and an essential zero divisor $z$.

Now, if $z$ is an essential zero divisor with $U_zK=0$ for the essential inner ideal $K$, for any $a \in J$,

$$U_{U_az}(K:a) = U_aU_zU_a(K:a) \subseteq U_aU_zK =0$$
and
\begin{equation*}\begin{split}
U_{B_az}(K:a) &=B_aU_zB_a(K:a) \subseteq \\
&\subseteq
 B_a(U_z(K:a)+ U_z(a \circ  (K:a)) + U_zU_a(K:a))\\
&\subseteq
 U_aU_zK= 0.\end{split}\end{equation*}

Therefore both elements $U_az$ and $B_az$ are essential zero divisors, and thus any $W_1\cdots W_nz$ as above is an essential zero divisor, which proves that ${\cal Z}_{ess}(J)$ is an ideal of $J$.\end{proof}

\begin{lema}\label{strongzetalocal}Let $J$ be a nondegenerate Jordan algebra. For any $a\in J$, the following containment holds: $$({\cal Z}_{ess}(J)+Ker\,a)/Ker\,a \subseteq {\cal Z}_{ess}(J_a),$$
and therefore, ${\cal Z}_{ess}(J_a)=0$ implies $a\in ann_J({\cal Z}(J))$.\end{lema}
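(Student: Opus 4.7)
The plan is to reduce the containment, by $\Phi$-linearity of the span $\mathcal Z_{ess}$, to a single essential zero divisor $z\in J$ annihilating (via $U_z$) an essential inner ideal $K$ of $J$, and then to show that the image $\bar z = z+Ker\,a\in J_a$ lies in $\mathcal Z_{ess}(J_a)$ (the case $\bar z=0$ being trivial). The candidate essential inner ideal of $J_a$ annihilating $\bar z$ is $\bar M := \overline{(K:a)}$, the image in $J_a$ of the inner ideal $(K:a) = \{x\in K \mid U_ax,\ x\circ a \in K\}$ of $J$.

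The preliminary checks are quick. First, $\bar M$ is an inner ideal of $J_a$ because the projection $J^{(a)}\to J_a$ sends inner ideals of $J$ to inner ideals of $J_a$ (using that $U_kU_ay, U_ka \in K$ whenever $k\in K$ and $y\in J$). Second, $U^{(a)}_{\bar z}\bar M=0$ follows immediately: for $k\in(K:a)$ the defining property $U_ak\in K$ combined with $U_zK=0$ gives $U^{(a)}_{\bar z}\bar k = \overline{U_zU_ak} = \bar 0$.

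The critical step is the essentiality of $\bar M$ in $J_a$. My plan here combines two ingredients: on one hand, the essentiality of $K$ in $J$ is inherited by $(K:a)$ (in the same spirit as \cite[Lemma 1.8]{densos}); on the other hand, given a nonzero inner ideal $\bar N$ of $J_a$, pick $l\in J$ with $\bar l\in\bar N$ and $U_al\neq 0$, use nondegeneracy of $J$ to choose $y\in J$ with $U_{U_al}y\neq 0$, and set $m=U_lU_ay$, so that $\bar m = U^{(a)}_{\bar l}\bar y\in\bar N$ is nonzero. Apply essentiality of $(K:a)$ in $J$ to the nonzero inner ideal $U_{U_am}J$ of $J$ (nonzero by nondegeneracy, since $U_am\neq 0$) to obtain $k = U_aU_mU_ay_0 \in (K:a)\cap U_{U_am}J$ nonzero; then the identity $\overline{U_mU_ay_0} = U^{(a)}_{\bar m}\bar{y_0}\in\bar N$, together with the containment $U_a(K:a)\subseteq K$ and nondegeneracy of $J$, is used to produce a nonzero common element of $\bar M\cap\bar N$ after a suitable adjustment of representatives modulo $Ker\,a$.

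The second assertion is then immediate: if $\mathcal Z_{ess}(J_a)=0$, the established inclusion forces $\mathcal Z_{ess}(J)\subseteq Ker\,a$, i.e., $U_a\mathcal Z_{ess}(J)=0$; since $\mathcal Z_{ess}(J)$ is an ideal of $J$ by Proposition~\ref{strongzeta}, the characterization of annihilators of ideals of a nondegenerate Jordan algebra recalled in~\ref{annihilator_def} gives $a\in ann_J(\mathcal Z_{ess}(J))$. I expect the main obstacle to be the essentiality of $\bar M$ in $J_a$: images of essential inner ideals of $J$ are not automatically essential in $J_a$, so the argument must exploit the specific containment $U_a(K:a)\subseteq K$ together with nondegeneracy of $J$ to descend from a nonzero intersection in $J$ to a nonzero common element in $J_a$.
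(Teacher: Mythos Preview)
Your overall architecture is correct, and the annihilation check $U^{(a)}_{\bar z}\bar M=0$ as well as the deduction of the second assertion match the paper exactly. The difficulty lies precisely where you anticipate it: the essentiality of your candidate $\bar M=\overline{(K:a)}$ in $J_a$, and your sketch does not close that gap. In step~7 you produce $w:=U_mU_ay_0$ with $\bar w\in\bar N$ and $U_aw=k\in(K:a)$ nonzero; but to conclude $\bar w\in\overline{(K:a)}$ you would need $w$ (or a $Ker\,a$-translate of it) to lie in $(K:a)$, which requires $w\in K$ and $w\circ a\in K$---neither of which follows from $U_aw\in K$. The appeal to \cite[Lemma~1.8]{densos} is also problematic: that lemma preserves \emph{density}, and here you are only assuming $K$ essential in a nondegenerate $J$ that need not be strongly nonsingular, so essentiality of $(K:a)$ in $J$ is itself a nontrivial claim you have not justified.

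The paper avoids all of this by choosing a larger witnessing inner ideal: instead of $(K:a)$ it takes $M=\{x\in J\mid U_ax\in K\}$, the full preimage of $\overline K$ under the projection $J^{(a)}\to J_a$. This $M$ is an inner ideal of the homotope $J^{(a)}$ (not of $J$), which is all that is needed, and the annihilation $U^{(a)}_{\bar z}\overline M\subseteq\overline{U_zK}=0$ holds for the same reason as in your argument. Essentiality of $\overline M$ is then immediate: given a nonzero inner ideal $\overline N$ of $J_a$ with preimage $N$, the set $U_aN$ is a nonzero inner ideal of $J$, so essentiality of $K$ yields $0\ne U_ax\in K$ for some $x\in N$, whence $x\in M$ and $0\ne\bar x\in\overline M\cap\overline N$. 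Note that your own construction actually produces exactly such an element ($U_aw\in K$ with $\bar w\in\bar N$), so your argument \emph{does} prove essentiality of the paper's $\overline M$---just not of your smaller $\overline{(K:a)}$. Switching to the paper's $M$ removes both the unproved essentiality of $(K:a)$ in $J$ and the vague ``suitable adjustment'' in step~7.
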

\begin{proof}Let $z\in J$ be an essential zero divisor, so that $U_zK =0$ for an essential inner ideal $K$ of $J$. Set $M = \{x\in J\mid U_ax\in K\}$, which is the preimage of $\overline{K} = (K+Ker\, a)/Ker\, a$ by the natural projection $J = J^{(a)}\rightarrow J_a$, $y \mapsto \bar{y}: = y+Ker\,a$. Clearly, since the set $M$ is the preimage of an inner ideal by that homomorphism, it is  an inner ideal of the homotope $J^{(a)}$.

Now, let $\overline{N} = N/Ker\,a$ be a nonzero ideal of $J_a$, where $Ker\,a\subseteq N$ is a nonzero ideal of $J^{(a)}$. Then $U_aN$ is a nonzero inner ideal of $J$ \cite{am-local}, hence by essentiality of $K$, there exists a nonzero $k= U_ax \in U_aN\cap K$, for some $0\ne x \in N$. Therefore $0\ne\bar  x \in \overline{N}\cap \overline{M}$, which proves that $\overline {M}$ is essential.

Finally, $U_{\bar z}\overline{M} = \overline{U_z^{(a)}M} = \overline{U_zU_aM} \subseteq \overline{U_zK} =0$, hence $\bar z \in {\cal Z}_{ess}(J_a)$, and thus $\overline{{\cal Z}_{ess}(J)} \subseteq {\cal Z}_{ess}(J_a) $, so we get the containment $\overline{{\cal Z}_{ess}(J)}\subseteq {\cal Z}_{ess}(J_a)$, and then $\overline{{\cal Z}_{ess}(J)}= 0$ implies $U_a{\cal Z}_{ess}(J) = 0$, hence $a \in ann_J({\cal Z}_{ess}(J))$.
\end{proof}

\begin{proposicion}\label{lckillsstrongzeta}If $J$ is a nondegenerate Jordan algebra, then $LC(J) \subseteq ann_J({\cal Z}_{ess}(J))$. Therefore, if $LC(J)$ is an essential ideal, then $J$ is strongly nonsingular.\end{proposicion}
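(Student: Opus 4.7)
The plan is to reduce the claimed containment to the intermediate fact that every LC Jordan algebra is already strongly nonsingular. Once this is established, it can be combined with Lemma~\ref{strongzetalocal} as follows: for any $a\in LC(J)$, the local algebra $J_a$ is LC by definition of $LC(J)$ and nondegenerate by \ref{local algebras}, hence ${\cal Z}_{ess}(J_a)=0$; Lemma~\ref{strongzetalocal} then yields $a\in ann_J({\cal Z}_{ess}(J))$, giving the desired containment.

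The main technical step, therefore, is to verify that an LC Jordan algebra $\widetilde{J}$ satisfies ${\cal Z}_{ess}(\widetilde{J})=0$. I would take an essential inner ideal $K$ of $\widetilde{J}$ and an element $z\in\widetilde{J}$ with $U_zK=0$, and aim to prove $z=0$. By the characterization of LC algebras (Theorem 10.2 of \cite{fgm}, recalled in \ref{lc def}), the essential inner ideal $K$ contains an injective element $s$, so the principal inner ideal $\Phi s+U_s\widehat{\widetilde{J}}$ is contained in $K$, and consequently $U_zU_s\widehat{\widetilde{J}}\subseteq U_zK=0$. Applying the fundamental identity $U_{U_sz}=U_sU_zU_s$ then gives $U_{U_sz}\widehat{\widetilde{J}}=0$; nondegeneracy of $\widetilde{J}$ forces $U_sz=0$, and the injectivity of $s$ finally yields $z=0$. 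This shows $\widetilde{J}$ has no nonzero essential zero divisors, so ${\cal Z}_{ess}(\widetilde{J})=0$ as required.

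For the second assertion, suppose $LC(J)$ is an essential ideal of $J$. Since $LC(J)\subseteq ann_J({\cal Z}_{ess}(J))$, the ideal $ann_J({\cal Z}_{ess}(J))$ contains an essential ideal, hence is itself essential. In a nondegenerate Jordan algebra any ideal meets its annihilator trivially (a semiprimeness consequence of the identities in \ref{annihilator_def}), so ${\cal Z}_{ess}(J)\cap ann_J({\cal Z}_{ess}(J))=0$; combined with essentiality of the annihilator this forces ${\cal Z}_{ess}(J)=0$, i.e., $J$ is strongly nonsingular.

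The principal obstacle is the LC-implies-strongly-nonsingular step: one must chain the LC characterization of essential inner ideals via injective elements with the injectivity of $U_s$, passing through nondegeneracy by means of the structural identity $U_{U_sz}=U_sU_zU_s$. The remaining work is a direct application of Lemma~\ref{strongzetalocal} together with the standard fact that ideals and their annihilators intersect trivially in the nondegenerate setting.
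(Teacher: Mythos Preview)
Your proposal is correct and follows essentially the same approach as the paper: both first show that an LC algebra is strongly nonsingular via an injective element $s\in K$ and the identity $U_{U_sz}=U_sU_zU_s$, then invoke Lemma~\ref{strongzetalocal} to conclude $LC(J)\subseteq ann_J({\cal Z}_{ess}(J))$. The only cosmetic difference is in deducing the second assertion---the paper passes through ${\cal Z}_{ess}(J)\subseteq ann_J(LC(J))$ and uses that the annihilator of an essential ideal vanishes, whereas you argue that $ann_J({\cal Z}_{ess}(J))$ is essential and meets ${\cal Z}_{ess}(J)$ trivially; both are equivalent one-line reductions.
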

\begin{proof}The second assertion follows directly from the first one since the containment $LC(J) \subseteq ann_J({\cal Z}_{ess}(J))$ implies ${\cal Z}_{ess}(J) \subseteq ann_J(LC(J))$. As for the first assertion, it suffices to prove that for any $a\in LC(J)$, $J_a$ is strongly nonsingular, according to \ref{strongzetalocal}, and this follows from the fact that if a Jordan algebra $J$ is LC, then it is strongly nonsingular. Indeed, if $J$ is LC, and $z\in J$ has $U_zK =0$ for an essential inner ideal $K$, then $K$ contains an injective element $s$, and $U_{U_sz}J = U_sU_zU_sJ \subseteq U_sU_zK=0$, hence $U_sz=0$ by nondegeneracy of $J$, and thus $z=0$ since $s$ is injective.\end{proof}

\begin{corolario}\label{densoesesencialenlc}Let $J$ be a nondegenerate Jordan algebra. If $J= LC(J)$ then an inner ideal of $J$ is essential if and only if it is dense.\end{corolario}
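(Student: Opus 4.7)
The plan is to reduce the statement to a direct invocation of Proposition \ref{lckillsstrongzeta} combined with the characterization of strong nonsingularity recalled in \ref{strongly nonsingular ideal}. The corollary asks for both implications in a nondegenerate setting, and the two of them are of very different weight, so I would separate them.

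The implication \emph{dense $\Rightarrow$ essential} is free and needs no new ingredient. By the definition of density in \ref{nii} (taking the empty family of elements), a dense inner ideal $K$ of $J$ satisfies $U_cK\neq 0$ for every nonzero $c\in J$. As recorded in \cite[Proposition 1.9]{densos}, this already forces $K$ to meet every nonzero inner ideal of $J$ nontrivially, which is precisely essentiality. So in this half of the equivalence the hypothesis $J=LC(J)$ is not even used; the statement holds for any nondegenerate $J$.

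The substantive direction is \emph{essential $\Rightarrow$ dense}. Here I would argue as follows: the hypothesis $J=LC(J)$ makes the ideal $LC(J)$ coincide with $J$, hence trivially essential in $J$. Proposition \ref{lckillsstrongzeta}, whose second conclusion was tailored for exactly this situation, then yields that $J$ is strongly nonsingular, i.e.\ has no nonzero essential zero divisors. Finally, by the equivalence \cite[Lemma 1.18 (b)]{densos} recalled in \ref{strongly nonsingular ideal}, strong nonsingularity of $J$ is equivalent to the statement that every essential inner ideal of $J$ is dense, which is precisely what is needed.

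There is no genuine obstacle: the corollary is simply the preceding proposition read through the strong-nonsingularity dictionary. If I wanted a self-contained paragraph I would just concatenate the two observations above, noting that the only verification not already done in the excerpt is the tautological remark $LC(J)=J\Rightarrow LC(J)$ is essential, which activates Proposition \ref{lckillsstrongzeta}.
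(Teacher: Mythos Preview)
Your proposal is correct and follows exactly the same route as the paper: invoke Proposition~\ref{lckillsstrongzeta} (with $LC(J)=J$ essential) to get strong nonsingularity, and then use the equivalence recorded in \ref{strongly nonsingular ideal} (i.e.\ \cite[Lemma 1.18(b)]{densos}) to conclude that essential inner ideals are dense. The paper's own proof is the one-line version of your argument, so there is nothing to add.
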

\begin{proof}This is straightforward from \ref{lckillsstrongzeta} and the coincidence of essentiality and density for inner ideals in strongly nonsingular algebras \ref{strongly nonsingular ideal}.\end{proof}

%
%

\section{Strongly prime Jordan algebras with nonzero LC-elements.}

In this section we consider nonzero LC-elements of strongly prime Jordan algebras and prove that   such elements  are exactly those elements of the Jordan algebra
lying in the socle of its maximal algebra of quotients.

\begin{apartado}  Let $J$  be a strongly prime Jordan algebra. If $J$ has nonzero PI-elements (i.e. nonzero elements at which the local algebra is a PI-algebra) then $LC(J)=PI(J)$ \cite[Proposition 3.3]{lc}. Otherwise, if $PI(J)=0$ (a situation that from now on, following \cite{fgm}, we will refer to by saying that $J$ is \emph{PI-less}), $J$ is special of hermitian type \cite[Lemma 5.1]{fgm} and given a $\ast$-tight associative envelope $R$ of $J$, the set of LC-elements coincides with the set of elements of $J$ having finite uniform dimension, more precisely, $LC(J)=F(J)=I_l(R)\cap J=I_r(R)\cap J$, where $F(J)=\{x\in J\mid udim(x)< \infty\}$,
$I_l(R)=\{x\in R\mid u_ldim(x) < \infty\}$, and
 $I_r(R)=\{x\in R\mid u_rdim(x) <\infty\}$ \cite[Theorem 4.4]{lc}. \end{apartado}

A consequence of the inheritance of density of inner ideals by their projections in local algebras proved in \cite{densos}, is that algebras of quotients interact well with taking local algebras:

\begin{lema}\label{local-q}
 Let $Q$ be a general algebra of quotients of a   Jordan algebra $J$.
For any $x\in J$, $Q_x$ is a general algebra of quotients of
$J_x$.\end{lema}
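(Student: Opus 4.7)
The plan is to establish first the embedding $J_x\hookrightarrow Q_x$, and then verify the two axioms (AQ1) and (AQ2) for $Q_x$ as an over-algebra of $J_x$. Since $J$ admits a general algebra of quotients, $J$ is nondegenerate, and (AQ2) applied to $Q$ itself forces $Q$ to be nondegenerate. Hence $Ker_J\,x=\{y\in J\mid U_xy=0\}$ and $Ker_Q\,x=\{p\in Q\mid U_xp=0\}$ satisfy $Ker_J\,x=J\cap Ker_Q\,x$, so the canonical map $J_x\to Q_x$ is a monomorphism through which I will view $J_x$ as a subalgebra of $Q_x$.

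For (AQ1), given $\bar q\in Q_x$ with a lift $q\in Q$, the plan is to carve out of ${\cal D}_J(q)$ a dense inner ideal of $J$ (refined, if necessary, by intersections of the form $({\cal D}_J(q'):x)$ from \ref{nii} for the auxiliary elements $q'\in Q$ that arise via the $x$-sandwich) whose image under the quotient $J\to J_x$ lies in ${\cal D}_{J_x}(\bar q)$. Each of the six denominator conditions (Di)--(Div') of \ref{denominadores} for $\bar y\in J_x$ with respect to $\bar q$ translates, via the fundamental formula $U_{U_xy}=U_xU_yU_x$ and the homotope identity $V^{(x)}_{y,q}z=\{y,U_xq,z\}$, into a condition on $y\in J$ that is satisfied on the chosen inner ideal. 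Density of the projected inner ideal in $J_x$ then follows from the inheritance of density of inner ideals under projection onto local algebras proved in \cite{densos} and invoked in the paragraph immediately preceding the lemma.

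For (AQ2), take $0\ne \bar q\in Q_x$, so $U_xq\ne 0$ in $Q$ for any lift $q$. Applying (AQ2) for $Q$ over $J$ at $U_xq$ yields $d\in {\cal D}_J(U_xq)$ with $U_{U_xq}d\ne 0$, which the fundamental formula rewrites as $U_xU_qU_xd\ne 0$, that is, $\overline{U_qU_xd}\ne 0$ in $Q_x$; after refining $d$ as in the preceding paragraph so that $\bar d\in {\cal D}_{J_x}(\bar q)$ while preserving this nonvanishing (which density of the refinement allows), we obtain $U^{(x)}_{\bar q}\bar d\ne 0$, verifying (AQ2) for $Q_x$ over $J_x$.

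The main obstacle is the bookkeeping for the homotope translation in (AQ1): ${\cal D}_J(q)$ is not in general closed under the map $y\mapsto U_xy$ needed to convert $J$-denominators of $q$ into homotope-denominators of $\bar q$, so the refinements must be chosen with care and their density must be tracked via repeated applications of \ref{nii} before the inheritance result of \cite{densos} can transfer density cleanly through the projection onto $J_x$.
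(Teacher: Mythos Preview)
Your plan is essentially the paper's own argument, but you are overcomplicating both steps. For (AQ1) no ``auxiliary elements $q'$'' are needed: the single refinement $({\cal D}_J(q):x)$ already suffices, and the paper verifies directly (using QJ16 once for (Di)) that every $a\in({\cal D}_J(q):x)$ satisfies the four conditions (Di), (Dii), (Diii), (Div) of \ref{denominadores} for $\bar a$ with respect to $\bar q$; density of the projection $\overline{({\cal D}_J(q):x)}$ in $J_x$ is then \cite[Lemma~1.20]{densos}. For (AQ2) your detour through ${\cal D}_J(U_xq)$ and the subsequent ``refining while preserving nonvanishing'' is both unnecessary and, as written, unjustified: the refined dense inner ideal need not contain your original witness $d$, and density in $J$ does not by itself give $U_{U_xq}L\ne 0$ for $U_xq\in Q\setminus J$. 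The clean argument is the contrapositive the paper uses: if $U_{\bar q}{\cal D}_{J_x}(\bar q)=0$, then in particular $U_xU_qU_x({\cal D}_J(q):x)=0$, i.e.\ $U_{U_xq}({\cal D}_J(q):x)=0$, and now density of $({\cal D}_J(q):x)$ in $J$ together with \cite[Lemma~2.4]{densos} forces $U_xq=0$, hence $\bar q=0$.
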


\begin{proof} Let $x\in J$. Then,  by \ref{aq}, $J$ is nondegenerate  and we have  $Ker_Jx=Ker_Qx\cap  J$. Therefore    $Q_x$ contains the
subalgebra $(J^{(x)}+Ker_Q x)/Ker_Qx$ isomorphic to $J_x$. We
denote with bars the projections in both $\overline{Q}=Q_x$ and
$\overline{J}=J_x$.

Take  $q\in Q$. It follows from  \ref{nii} and \cite[Lemma 1.20]{densos} that
$\overline{({\cal D}_J(q):x)}$ is a dense inner ideal of
$\overline{J} $. We will prove that ${\cal D}_{\overline{J} }(\overline{q})$ is dense in $\overline{J}$ by  checking the containment
  $\overline{({\cal
D}_J(q):x)}\subseteq {\cal D}_{\overline{J} }(\overline{q})$.

Take
$a\in ({\cal D}_J(q):x)$. By QJ16  we have
\begin{align*} U_aU_xq&=-U_xU_aq-V_aU_xV_aq+U_{a\circ
x}q+U_{U_ax,x}q\in\\%
&\in U_xU_{ {\cal D}_J(q) }q+V_aU_x({\cal D}_J(q)\circ q)+U_{{\cal
D}_J(q)}q+\{{\cal D}_J(q) ,q,J\}\subseteq J,\end{align*}%
hence
$U_{\overline{a}}\overline{q}=\overline{U_aU_xq}\in \overline{J}$.
On the other hand
\begin{align*} & U_{\overline{q}}\overline{a}=\overline{U_qU_xa}%
\in \overline{U_qU_x({\cal D}_J(q):x)}\subseteq \overline{U_q{\cal D}_J(q)}\subseteq \overline{J},\\
&U_{\overline{q}}U_{\overline{a}}\overline{J}=\overline{U_qU_xU_aU_xJ}%
=\overline{U_qU_{U_xa}J}\subseteq \overline{U_qU_{{\cal
D}_J(q)}J}\subseteq \overline{J},\end{align*} and
$$V_{\overline{q},\overline{a}}\overline{J}= \{\overline{q},\overline{a},\overline{J}\} %
=\overline{\{q,U_xa,J\}}\subseteq \overline{\{q,U_x({\cal
D}_J(q):x),J\}}\subseteq \overline{\{q,{\cal D}_J(q),J\}}\subseteq
\overline{J}$$ which implies  that  $\overline{({\cal
D}_J(q):x)}\subseteq {\cal D}_{\overline{J} }(\overline{q})$ by  \ref{denominadores}, and thus ${\cal
D}_{\overline{J}}(\overline{q})$ is a dense inner ideal of
$\overline{J}$.

Finally, if $U_{\overline{q}}{\cal
D}_{\overline{J}}(\overline{q})=\overline{0}$, then
$U_xU_qU_x({\cal D}_J(q):x)=0$ which implies that   $U_xq=0$ by the density of
${\cal D}_J(q)$ in $J$. Hence $\overline{q}=\overline{0}$ and therefore $\overline{Q}$ is a general algebra of quotients of $\overline{J}$.
\end{proof}

As noticed in the previous section classical algebras of quotients are general algebras of quotients. The converse holds for unital algebras with finite capacity.

 \begin{lema}\label{eq}  Let $J$ be   a subalgebra of a unital Jordan algebra
 $Q$ of finite capacity. Then the following assertions  are equivalent:
 \begin{enumerate}\item[(i)] $Q$ is an algebra of quotients of $J$.
 \item[(ii)] $J$ is a classical  order in $Q$.\end{enumerate}
 Moreover, under the above conditions both $J$ and $Q$ are nondegenerate.\end{lema}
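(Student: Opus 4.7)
The implication (ii)$\Rightarrow$(i) is essentially immediate from what has already been recalled: by \ref{classical_order}, classical algebras of quotients are in particular general algebras of quotients in the sense of \ref{aq}. So the real content of the lemma lies in the forward direction (i)$\Rightarrow$(ii), together with the moreover clause.

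My plan for (i)$\Rightarrow$(ii) is to reduce everything to the criterion contained in the second part of \ref{InTEx}, namely that a unital innerly tight extension with finite capacity is a classical algebra of quotients. First I would dispatch nondegeneracy. The nondegeneracy of $J$ is a direct consequence of $J$ admitting an algebra of quotients, as recorded in \ref{aq}. For $Q$, I would argue from axiom (AQ2): if $0 \neq q \in Q$, then $U_q\mathcal D_J(q)\neq 0$, and since $\mathcal D_J(q)\subseteq J\subseteq Q$, this forces $U_qQ\neq 0$, so $Q$ has no nonzero absolute zero divisors and is therefore nondegenerate.

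With both $J$ and $Q$ nondegenerate, and $Q$ unital of finite capacity, the conclusion is essentially packaged in \ref{InTEx}. Concretely, by the first assertion there, any algebra of quotients of a nondegenerate Jordan algebra is an innerly tight extension, so $J\leq Q$ is innerly tight. The second assertion in \ref{InTEx} (which cites \cite[Proposition 2.10]{fgm}) then says that a unital innerly tight extension with finite capacity is a classical algebra of quotients; applied here this gives that every injective element of $J$ is invertible in $Q$ and that $\mathcal D_J(q)\cap\operatorname{Inj}(J)\neq\emptyset$ for every $q\in Q$, which is precisely the statement that $J$ is a classical order in $Q$.

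The only point that requires any real care is making sure that the nondegeneracy of $Q$ is in hand before invoking \ref{InTEx}, since innerly tight extensions are only meaningful when the over-algebra is nondegenerate; but as observed above, this is forced by (AQ2) and costs nothing. I do not foresee a serious obstacle: the lemma is really a repackaging of the tight-extension machinery of \cite{fgm} in the language of general algebras of quotients of \cite{densos}, and all the needed ingredients have been collected in this section.
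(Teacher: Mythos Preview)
Your proof is correct and follows essentially the same route as the paper: both directions hinge on \ref{InTEx}, using that an algebra of quotients is an innerly tight extension (dense implies essential) and then invoking \cite[Proposition 2.10]{fgm}, while (ii)$\Rightarrow$(i) is immediate from \ref{classical_order}. The only difference is cosmetic: the paper establishes (i)$\Leftrightarrow$(ii) first and deduces nondegeneracy afterwards (citing \cite[Lemma 2.4(1)]{densos} for $Q$), whereas you front-load nondegeneracy; your remark that innerly tight extensions are ``only meaningful when the over-algebra is nondegenerate'' is not quite right---the definition in \ref{InTEx} imposes no such hypothesis, and indeed the paper applies \cite[Proposition 2.10]{fgm} without first checking it---but this does no harm to your argument.
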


 \begin{proof}
 Suppose that $Q$ is an algebra of quotients of $J$.  Since dense inner ideals are, in particular, essential \cite[Lemma 1.18 a]{densos},  $Q$ is an innerly tight extension
 of $J$  (see \ref{InTEx}) hence,  by \cite[Proposition 2.10]{fgm},  $J$ is a classical order in
 $Q$.
Conversely, if $J$ is a classical order in $Q$, then (i) follows
from \cite[Examples 2.3.5]{densos} and the fact that  $Q$ is generated by  $J$ and the
inverses in $Q$ of the elements of $Inj(J)$ since $J$ is a classical
order in $Q$.

The nondegenerancy of $J$ is a straightforward consequence of the above equivalent conditions, and that of $Q$ follows from
  \cite[Lemma 2.4(1)]{densos} if $Q$ is
an algebra of
 quotients of $J$, and from \cite[Proposition 2.9(vi)]{fgm}
 if $J$ is a classical order in $Q$.
  \end{proof}

 To prove the next proposition we
 need to recall a fact included in the
proof of \cite[Theorem 4.4]{lc}. Specifically, we recall the
following result  which first appeared in \cite[Theorem 6.5]{pi-i}.

\begin{lema}\label{tec} {\rm \cite[Theorem 6.5]{pi-i}} Let $J$ be a PI-less strongly prime Jordan algebra and $R$ a $\ast$-tight
associative envelope of $J$. Then, for each $a\in J$, the
subalgebra $S$ of $J$ generated by ${\cal H}(J)^{(1)}$, where
${\cal H}(J)\neq 0$ for some   hermitian ideal ${\cal H}(X)$ of
$FSJ(X)$, and the element $a$, is strongly prime of hermitian
type. Moreover, $A=alg_R(S)$ is a
$\ast$-tight associative envelope of $S$, and $S=H_0(A, \ast)$ is ample in $A$.\end{lema}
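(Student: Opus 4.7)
The plan is to verify each of the three claimed properties in turn, leveraging the absorption properties of hermitian ideals and the $\ast$-tightness of the envelope $R$. First, I would justify the existence of the hermitian ideal ${\cal H}(X)$ with ${\cal H}(J)\neq 0$: since $J$ is PI-less and strongly prime, the classification of strongly prime Jordan algebras (Zelmanov--McCrimmon) shows that $J$ is of hermitian type, so such an ${\cal H}(X)$ exists, and its derived ideal ${\cal H}(J)^{(1)} = U_{{\cal H}(J)}{\cal H}(J)$ is again a nonzero ideal of $J$ because in a strongly prime algebra the derived ideal of any nonzero ideal is nonzero.

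Next, I would argue that $S$ is strongly prime of hermitian type. Since $S$ contains ${\cal H}(J)^{(1)}$ (a nonzero hermitian-type ideal of $J$), the iterated evaluations ${\cal H}(S)$ already contain ${\cal H}({\cal H}(J)^{(1)})$, and an absorption argument shows this contains the second derived ideal ${\cal H}(J)^{(2)}$, which is nonzero by strong primeness of $J$. Thus $S$ is of hermitian type. Nondegeneracy and primeness of $S$ would then be reduced to the corresponding properties of $J$: any degenerate or annihilating element of $S$ would, after multiplication by elements of ${\cal H}(J)^{(1)}$ (which sits inside $S$ as a subset), produce an analogous defect in $J$ -- contradicting strong primeness of $J$.

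Third, I would construct $A = alg_R(S)$ and verify its role as a $\ast$-tight envelope of $S$ with $S$ ample. Since $J \subseteq H(R,\ast)$ (or an appropriate $\ast$-invariant subspace) and $S \subseteq J$, the associative subalgebra $A$ generated by $S$ is automatically $\ast$-invariant. For $\ast$-tightness, I would show that any nonzero $\ast$-invariant ideal of $A$ meeting $S$ trivially can be enlarged to a nonzero $\ast$-invariant ideal of $R$ meeting $J$ trivially (using that $A \subseteq R$ and $S \subseteq J$), contradicting the $\ast$-tightness of $R$ over $J$. For amplitude, the conditions $a+a^\ast = 2a \in S$ and $aa^\ast = a^2 \in S$ are immediate since $a$ is Jordan in $S$, while the sandwich condition $aha^\ast \in S$ for $h \in H(A,\ast)$ follows from expressing $aha^\ast = U_ah$ in Jordan terms and invoking absorption of the hermitian ideal ${\cal H}(J)^{(1)} \subseteq S$ under the $U$-operators.

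The hardest step is expected to be establishing the equality $S = H_0(A,\ast)$ rather than just the containment $S \subseteq H_0(A,\ast)$. This requires showing that every symmetric element of $A$ produced from the generators of $S$ is already Jordan-expressible in terms of those generators, which is typically handled by induction on the length of associative words together with the Cohn-type identities that characterize special Jordan subalgebras inside their associative envelopes, and critically relies on the polynomial identities satisfied by hermitian ideals to control the hermitian evaluations on $S$.
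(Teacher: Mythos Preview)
The paper does not give its own proof of this lemma. It is stated as a quotation of a known result, explicitly attributed to \cite[Theorem 6.5]{pi-i} and introduced as ``a fact included in the proof of \cite[Theorem 4.4]{lc}''. So there is no argument in the present paper to compare your proposal against.

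That said, a couple of remarks on your sketch are in order. Your treatment of $\ast$-tightness is the weakest link: a nonzero $\ast$-ideal of the subalgebra $A=alg_R(S)$ need not ``enlarge'' to a nonzero $\ast$-ideal of $R$; ideals of subalgebras do not extend upward in general. The actual mechanism in \cite{pi-i} goes through the hermitian absorption property: since $S$ contains the derived ideal ${\cal H}(J)^{(1)}$, and hermitian polynomials absorb associative multiplication in the sense that $r\,h\,r^\ast$ lands back in the Jordan algebra for $r$ in the associative envelope and $h$ in ${\cal H}(J)^{(1)}$, one can show that a $\ast$-ideal of $A$ intersecting $S$ trivially already forces vanishing inside $J$ via this absorption, contradicting strong primeness of $J$ and $\ast$-tightness of $R$.

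Similarly, in your amplitude argument you appear to verify the conditions $b+b^\ast,\ bb^\ast,\ bhb^\ast\in S$ only for the distinguished generator $a\in J$ (where they are trivial because $a^\ast=a$), not for an arbitrary $b\in A$. The nontrivial content is precisely the case of a general $b\in A$, i.e.\ an associative word in elements of $S$; this is exactly what the defining property of hermitian ideals of $FSJ(X)$ is designed to deliver, and it is that absorption---not Cohn-type identities---that does the work in \cite{pi-i}.
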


\begin{proposicion}\label{local-goldie}
Let $J$ be a PI-less strongly prime Jordan algebra and let $R$
a $\ast$-tight associative $\ast$-envelope of $J$.  If
$LC(J)\neq0$,  then $R_a$ is Goldie for any nonzero $a\in LC(J)$.
\end{proposicion}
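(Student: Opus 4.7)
The strategy combines the finiteness coming from $LC(J)=I_l(R)\cap J=I_r(R)\cap J$ (which immediately gives $u_ldim(R_a)<\infty$ and $u_rdim(R_a)<\infty$) with an upgrade of the LC property of $J_a$ to full Jordan Goldie, followed by a transfer to the associative setting of $R_a$ via the ample $\ast$-tight envelope provided by Lemma \ref{tec}. The first step is to show that $J_a$ is Jordan Goldie, not merely LC: by \cite[Theorem 4.1]{acm}, $J_a$ is strongly prime; PI-lessness is inherited from $J$ (a nonzero element of $J_a$ whose local algebra is PI would produce, via the relationship between iterated local algebras, a PI-element of $J$, contradicting $PI(J)=0$); and as $J_a$ is LC, it is a classical order in some nondegenerate unital Jordan algebra $Q$ of finite capacity. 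Strong primeness forces $Q$ simple, and by \ref{locally artinian} such $Q$ is either artinian or a nonartinian quadratic factor; the latter is excluded since quadratic factors are PI and would force $J_a\subseteq Q$ to be PI. Thus $Q$ is simple artinian and $J_a$ is strongly prime Jordan Goldie.

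Next, I would reduce to the ample $\ast$-tight setting via Lemma \ref{tec}: let $S$ be the subalgebra of $J$ generated by $\mathcal{H}(J)^{(1)}$ and $a$, and $A=alg_R(S)$, so that $S$ is strongly prime of hermitian type, $A$ is $\ast$-tight over $S$, and $S=H_0(A,\ast)$ is ample. Because $S$ contains the nonzero ideal $\mathcal{H}(J)^{(1)}$ of $J$, the argument of the previous paragraph applies verbatim to $S$ and shows that $S_a$ is Jordan Goldie, PI-less and strongly prime. The Jordan-to-associative Goldie transfer available for ample hermitian-type pairs (which underlies the proof of \cite[Theorem 4.4]{lc} and is in the spirit of \cite{fgm}) then yields that $A_a$ is associative Goldie.

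Finally, I would pass from $A_a$ to $R_a$. Semiprimeness of $R$ (inherited from the nondegeneracy of $J$) transfers to $R_a$: from $\bar x R_a\bar x=0$ one deduces $(axa)R(axa)=0$, hence $axa=0$, so $\bar x=0$. Combined with $u_ldim(R_a),u_rdim(R_a)<\infty$ coming from $a\in I_l(R)\cap I_r(R)$, Goldie-ness of $R_a$ reduces to establishing ACC on one-sided annihilators, which should follow from the Goldie structure of the subalgebra $A_a\subseteq R_a$ by controlling one-sided ideals of $R_a$ through their intersections with $A_a$ via the ample relationship $S_a=H_0(A_a,\ast)$. The main obstacle is precisely this last transfer: Goldie-ness does not ascend along arbitrary subring inclusions, so one must exploit the ample structure carefully. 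An alternative route, probably cleaner, is to derive Jordan nonsingularity of $J_a$ (automatic from its Jordan Goldie-ness by \ref{goldieThm}(iv)) and then to argue, via the hermitian connection to the envelope, that $R_a$ is associatively nonsingular; a semiprime associative algebra of finite uniform dimension that is nonsingular is automatically Goldie, closing the argument.
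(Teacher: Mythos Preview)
Your ``alternative route'' is exactly the paper's approach, and it is the one that actually works; your first route (lifting ACC on annihilators from $A_a$ to $R_a$) is correctly flagged as an obstacle, and the paper does not attempt it. The overall architecture---finite uniform dimension of $R_a$ from $a\in I_l(R)\cap I_r(R)$, reduction to nonsingularity, passage to the ample pair $(S,A)$ via Lemma~\ref{tec}---is the same.

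Two points deserve sharpening. First, your detour through ``$J_a$ is full Goldie because a nonartinian quadratic factor would be PI'' is correct but unnecessary: the paper simply quotes \cite[Proposition~4.2, Lemma~4.1]{lc} to get that $J$ and $J_a$ are nonsingular, then transfers nonsingularity to $S_a$ via the essential ideal $(I+Ker_Ja)/Ker_Ja$ and \cite[Proposition~6.2]{fgm}. Second, and more importantly, the phrase ``argue, via the hermitian connection to the envelope, that $R_a$ is associatively nonsingular'' hides the actual content. The precise mechanism is: one works not with $A_a$ directly but with $alg_{A_a}(S_a)$, which (from the proof of \cite[Theorem~4.4]{lc}) is $\ast$-tight over $S_a$ and contains a nonzero $\ast$-ideal of $A_a$. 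The $\ast$-tightness plus nonsingularity and finite uniform dimension of $S_a$ give nonsingularity of $alg_{A_a}(S_a)$ by \cite[Theorem~7.17]{fgm}. Nonsingularity then propagates to $R_a$ through the sandwich
\[
alg_{A_a}(S_a)\subseteq A_a\subseteq R_a\subseteq Q_s(A_a),
\]
using that $alg_{A_a}(S_a)$ contains a nonzero (hence essential) $\ast$-ideal of $A_a$. Without this intermediate algebra and the $Q_s$-sandwich you do not have a direct Jordan-to-$R_a$ transfer, so this is the step your sketch should make explicit.
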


 \begin{proof} Let $ a $ be a nonzero LC-element of $J$.  By \cite[Proposition 4.2]{lc}, $J$ is nonsingular and $J_a$ has finite uniform dimension, so that  by \cite[Theorem 4.4]{lc},
 $R_a$ has finite right and left uniform dimension, and thus for $R_a$ to be Goldie  it suffices that $R_a$ be right and left nonsingular.

 Since $J$ is PI-less, $J$ is special of hermitian type \cite[Lemma 5.1]{fgm}. Let ${\cal H}(X)$ be an hermitian ideal of
 $FSJ(X)$ such that  ${\cal H}(J)\neq0$ and then consider the subalgebra $S$ of $J$ generated by
 the ideal $I={\cal H}(J)^{(1)}$  and the element $a$.
 By \ref{tec} $S$ is a  strongly
 prime Jordan algebra and    $A=alg_R(S)$ is a $\ast$-tight associative envelope of
 $S$.

 Clearly, the local algebra $S_a$ contains a nonzero ideal  (hence an essential ideal)  which is isomorphic to the nonzero ideal
 $(I+Ker_Ja)/Ker_Ja$ of the strongly prime Jordan algebra $J_a$. Since $LC(J)\neq0$, $J$ is nonsingular
 \cite[Proposition 4.2]{lc}, and so is the local algebra $J_a$
 \cite[Lemma 4.1]{lc}. Therefore $S_a$ is nonsingular by \cite[Proposition 6.2]{fgm}. Moreover we have $udim(J_a)=udim(S_a)$, which implies that  $S_a$ has finite uniform dimension.

Next we claim that  $alg_{A_a}(S_a)$, the $\ast$-subalgebra of $A_a$ generated by $S_a$, is $\ast$-tight over $S_a$  and that  $alg_{A_a}(S_a)$ contains a nonzero $\ast$-ideal of $A_a$. This claim is proved in \cite[Theorem 4.4]{lc}. Now since $alg_{A_a}(S_a)$ is $\ast$-tight over $S_a$ and $S_a$ has finite uniform dimension, the nonsingularity of  $alg_{A_a}(S_a)$ follows from that of $S_a$ by \cite[Theorem 7.17]{fgm}.
Finally since $alg_{A_a}(S_a)\subseteq A_a\subseteq R_a\subseteq
Q_s(A_a)$, where $Q_s(A_a)$ denotes the algebra of symmetric Martindale ring of quotients of $A_a$    and  $alg_{A_a}(S_a)$ contains a nonzero $\ast$-ideal of
$A_a$,  we have that $R_a$ is nonsingular, hence $R_a$ is Goldie.
 \end{proof}

   \begin{lema}\label{L(as)} Let $R$ be a semiprime associative algebra. Let $a\in R$ be such that $R_a$ is Goldie. Then for any $s\in R$ such that $\overline{s}\in Reg(R_a)$:
  \begin{enumerate}
  \item[(i)] $L(as)=Ras+l_R(a)$, where $l_R(a)$ denotes the left annihilator  of
$a$ in $R$, is a dense left ideal of $R$.
    \item[(ii)] The pair $ (L(as), f_s)$, where the map $f_s: L(as)\to R$ is given by $f_s(xas+z)=xa$, for all $x\in R$ and $z\in l_R(a)$, defines an element in $Q_{max}^l(R)$, the maximal left algebra of quotients of $R$.
  \end{enumerate}
  \end{lema}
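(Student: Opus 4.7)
The plan is to build ``$as^{-1}$'' as a compatible left-multiplication map defined on a dense left ideal of $R$, which is exactly the data of an element of $Q_{max}^l(R)$.

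For part (i), I would first note that $L(as)=Ras+l_R(a)$ is a left ideal, being a sum of two left ideals. The heart of the matter is density. The key reformulation is the equivalence
\[
rx \in L(as) \iff rxa \in Rasa,
\]
which follows because if $rx=was+z$ with $za=0$ then $rxa=wasa$, and conversely, writing $rxa=wasa$ and setting $z=rx-was$ yields $za=0$. Given $x \in R$ and $0 \neq y \in R$, the goal is to find $r\in R$ with $rxa\in Rasa$ and $ry\neq 0$. Because $R_a$ is Goldie and $\overline{s}\in Reg(R_a)$, Goldie's theorem provides a classical quotient ring of $R_a$ in which $\overline{s}$ is invertible; the resulting Ore condition yields $\overline{w}\in Reg(R_a)$ and $\overline{v}\in R_a$ with $\overline{w}\cdot\overline{xa}=\overline{v}\cdot\overline{s}$ in $R_a$. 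Unpacking this congruence modulo $Ker\,a$ and inserting suitable $a$-factors produces an $r$, built from $w$ and $a$, with $rxa\in Rasa$. For $ry\neq 0$, the standard argument is to exploit the regularity of $\overline{w}$ in $R_a$: the set of valid Ore multipliers $\overline{w}$ is a dense (in particular essential) left ideal of $R_a$, and combining this with the semiprimeness of $R$ shows that one can choose $r$ avoiding the left annihilator of $y$.

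For part (ii), well-definedness of $f_s$ must be checked first. If $xas+z=x'as+z'$ with $z,z'\in l_R(a)$, then right-multiplication by $a$ gives $(x-x')asa=0$. Setting $u=(x-x')a$, this reads $usa=0$, which translates in $R_a$ to $\overline{u}\cdot \overline{s}=\overline{0}$ after the standard conventions for $Ker\,a$. Regularity of $\overline{s}$ forces $\overline{u}=\overline{0}$, and semiprimeness of $R$ together with $u\in Ra$ then yields $u=0$, i.e.\ $xa=x'a$, so $f_s$ is well defined. Left $R$-linearity is immediate: for $r\in R$ and $l=xas+z\in L(as)$, we have $rl=(rx)as+rz$ with $rz\in l_R(a)$ since $l_R(a)$ is a left ideal, so $f_s(rl)=rxa=rf_s(l)$. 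Coupled with the density from part (i), the pair $(L(as),f_s)$ thus represents an element of $Q_{max}^l(R)$.

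The main obstacle I foresee is the density verification in part (i): one must carefully translate the Ore equation $\overline{w}\cdot\overline{xa}=\overline{v}\cdot\overline{s}$ in $R_a$ back to a genuine containment $rxa\in Rasa$ in $R$, which requires bookkeeping of the $Ker\,a$ congruences and insertions of $a$, and then argue that enough such multipliers $r$ exist to avoid annihilating $y$. The remaining items---the left ideal property, well-definedness, and linearity---are direct consequences of the defining identities and the regularity of $\overline{s}$.
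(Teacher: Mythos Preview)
Your overall strategy matches the paper's: reduce to the density of $R_a\overline{s}$ in the Goldie ring $R_a$ and translate back via semiprimeness. Two concrete gaps remain, however.

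In part (ii), the translation to $R_a$ is incorrect. The product in $R_a$ is $\overline{p}\cdot\overline{q}=\overline{paq}$, so $\overline{u}\cdot\overline{s}=\overline{uas}$, not $\overline{usa}$; with $u=(x-x')a$ the relation $usa=0$ does not yield $\overline{u}\cdot\overline{s}=\overline{0}$. The paper works instead with $w=x-x'$: from $wasa=0$ one gets $a(was)a=0$, hence $\overline{w}\cdot\overline{s}=\overline{0}$, and regularity gives only $awa=0$, not $wa=0$. The missing step is to run the same computation for $rw$ with arbitrary $r\in R$, obtaining $aRwa=0$; then $(wa)R(wa)=w(aRwa)=0$ and semiprimeness forces $wa=0$. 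Your claim that ``semiprimeness together with $u\in Ra$'' suffices does not go through without this quantification over $r$.

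In part (i), you correctly flag the main obstacle but do not resolve it. The multipliers $r$ produced from $R_a$ all lie in $aRa$, so ``choosing $r$ to avoid the left annihilator of $y$'' is impossible whenever $aRay=0$ for some $y\neq 0$. The paper circumvents this with a two-stage semiprimeness bootstrap: assuming $(L(as):b)c=0$, the $R_a$-density argument first yields $aca=0$; repeating with $b,c$ replaced by $rb,\,rct$ gives $arcta=0$ for all $r,t$, hence $aRcRa=0$. Semiprimeness then gives $RcRa=0$, so the two-sided ideal $RcR$ lies in $l_R(a)\subseteq L(as)$, hence $RcR\subseteq (L(as):b)$; the hypothesis $(L(as):b)c=0$ now gives $RcRc=0$, and a second application of semiprimeness yields $c=0$.
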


  \begin{proof}  Let $a\in R$ be such that $R_a$ is Goldie and take $s\in R$ whose projection $\overline{s}$ in $R_a$ is regular in $R_a$.

  (i) $L(as)$ is clearly a left ideal of $R$. To prove its density, take $b,c\in R$ and assume that $(L(as):b)c=0$. Take
  $\overline{x}\in (R_a\overline{s}:\overline{b})$. Then we have
  $\overline{xab}=\overline{x}\overline{b}\in R_a \overline{s}=\overline{Ras}$, thus there is
    $y\in R$ such that $axaba=ayasa$, hence $axab=ayas-z$ for some   $z\in l_R(a)$ and $axab\in Ras+l_R(a)=L(as)$.
   Therefore, for all $\overline{x}\in (R_a \overline{s}:\overline{b})$ we have $axa\in (L(as):b)$ and thus
   $(L(as):b)c=0$ implies that $(R_a\overline{s}:\overline{b})\overline{c}=\overline{0}$. But  since
   $R_a$ is Goldie,  $R_a\overline{s}$ is dense (by the regularity of $ \overline{s}$ in $R_a $) and we get
    $\overline{c}=\overline{0}$, or equivalently,
  $aca=0$.

   Now, for any  $r, t\in R$. we have $(L(as):rb)rct\subseteq
 (L(as):b)ct=0$, and as above, we get $arcta=0$. Since
     $r, t\in R$ are arbitrary, we get $aRcRa=0$, hence $(RcRa)R(RcRa)=0$, and from the
 semiprimeness of $R$, it   follows that the ideal $RcR$ is
 contained in $l_R(a)$. Then $RcR\subseteq (L(as):b)$, and we have
  $RcRc=0$ since $(L(as):b)c=0$, which again by  semiprimeness  of $R$ implies $c=0$. Thus, $L(as)$ is a dense left ideal of $R$.

    (ii) We claim that the pair $ (L(as), f_s)$ defines an element of $Q_{max}^l(R)$, so we begin by proving that $f_s$ is a well-defined homomorphism of left $R$-modules. To that end, we first note
    that  $Ras\cap l_R(a)=0$. Indeed, if
 $xas\in l_R(a)$ with $x\in R$, for all $r\in R$ we have $rxasa=0$,
 hence $arxasa=0$, and thus $\overline{(rx)}\overline{s}=\overline{0}$ in $R_a$. But since $\overline{s}$ is regular, then $\overline{rx}=\overline{0}$. Therefore
   $aRxa=0$, and $xa=0$ by
 semiprimeness of $R$, hence  $xas=0$.

 Thus, to prove that $f_s$ is well-defined it suffices to check that if $xas=0$ for some $x\in R$, then $xa=0$. But this can be proved as before.
Thereby, $f_s$ is   well-defined,  and since it is clearly a homomorphism of left $R$-modules, from the density of $L(as)$ in $R$ it follows that
 $ (L(as), f_s)$ defines an element of  $Q_{max}^l(R)$.
    \end{proof}

\begin{apartado}\label{elementoqs}
Let us denote by $q_s$ the element of $Q_{max}^l(R)$  determined by the pair $( L(as), f_s)$ described above.  It is straightforward to check that $q_s$ does indeed belong to the maximal symmetric algebra of quotients $Q_\sigma(R)$ of $R$.
\end{apartado}

\begin{lema}\label{qsQsigma} Let $R$ be a semiprime associative algebra and  $Q_\sigma(R)$ be its maximal symmetric algebra of quotients. For any $a \in R$ such that $R_a$ is Goldie, and any $s\in R$ with $\overline{s}\in Reg(R_a)$, the pair $( L(as), f_s)$  defines an element of $Q_\sigma(R)$.  \end{lema}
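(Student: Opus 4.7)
The plan is to exhibit, in a manner symmetric to Lemma 2.14, a dense right ideal $K$ of $R$ together with a right $R$-module homomorphism $g:K\to R$ that is compatible with $f_s$, so that the pair $(L(as),f_s)$ represents an element living in both the maximal left and the maximal right algebras of quotients, that is, in $Q_\sigma(R)$. The natural candidate is
\[
K(sa)=saR+r_R(a),
\]
with $r_R(a)$ the right annihilator of $a$, together with $g_s:K(sa)\to R$ defined by $g_s(sax+w)=ax$ for $x\in R$, $w\in r_R(a)$.

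First I would verify that $K(sa)$ is a dense right ideal of $R$ by running the symmetric argument to Lemma \ref{L(as)}(i): starting from $b,c\in R$ with $c(rK(sa))=0$ for all $r\in R$ translated into the homotope, one uses that $\bar s$ is regular in $R_a$, hence $\bar sR_a$ is a dense right ideal of the Goldie ring $R_a$, to force $aca=0$, and then semiprimeness of $R$ to conclude $c=0$. Next I would show that $g_s$ is well-defined, which reduces to checking $saR\cap r_R(a)=0$: if $sax\in r_R(a)$, then $asax=0$, so $asaxra=0$ for every $r$, whence $\bar s\cdot\overline{xr}=\bar 0$ in $R_a$; left regularity of $\bar s$ gives $axra=0$ for all $r$, i.e.\ $axR\subseteq l_R(a)$, and therefore $(ax)R(ax)\subseteq axR\cdot ax=0$, so semiprimeness yields $ax=0$ and hence $sax=0$. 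That $g_s$ is then a right $R$-module homomorphism is immediate.

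Finally I would verify the associativity that links $f_s$ and $g_s$: for any $x=uas+z\in L(as)$ (with $z\in l_R(a)$) and $y=sav+w\in K(sa)$ (with $w\in r_R(a)$), a direct computation gives
\[
f_s(x)\,y=ua(sav+w)=uasav,\qquad x\,g_s(y)=(uas+z)av=uasav,
\]
since $aw=0$ and $za=0$. This compatibility shows that the left $R$-module datum $(L(as),f_s)$ and the right $R$-module datum $(K(sa),g_s)$ represent the same element $q_s$, realized simultaneously as an element of $Q_{max}^l(R)$ and of $Q_{max}^r(R)$ with $L(as)q_s\subseteq R$ and $q_sK(sa)\subseteq R$; by the very definition of $Q_\sigma(R)$ recalled in the excerpt, this is exactly what it means for $q_s$ to lie in $Q_\sigma(R)$.

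The routine parts are the density of $K(sa)$ and the compatibility identity; the only step that needs genuine care is the well-definedness of $g_s$, where one must mirror the chain of reductions of Lemma \ref{L(as)} that alternates between lifting from $R_a$ via left regularity of $\bar s$ and invoking semiprimeness of $R$ to cancel the resulting nilpotent ideals. Since $R_a$ is Goldie, $\bar s$ is regular on both sides, so the same tools (density of $R_a\bar s$ and of $\bar sR_a$, nonsingularity) power both the original and the symmetric arguments without any additional hypothesis.
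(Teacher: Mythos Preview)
Your argument is correct and follows the same approach as the paper: both use the dense right ideal $K(sa)=saR+r_R(a)$ and the formula $q_s(sax+w)=ax$. The only difference is packaging: the paper, having already placed $q_s$ in $Q_{max}^l(R)$, simply verifies $q_sK(sa)\subseteq R$ by computing $L(as)\bigl(q_s(sax+u)-ax\bigr)=0$ and invoking density of $L(as)$, thereby appealing directly to Lanning's characterization of $Q_\sigma(R)$; this sidesteps the separate well-definedness check for $g_s$, since the value $q_s(sax+u)$ is forced rather than defined.
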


\begin{proof} Let $K(sa)=saR+r_R(a)$, where $r_R(a)$ is the right annihilator of $a$ in $R$. Following \ref{L(as)} we can prove that $K(sa)$ is a dense right ideal of $R$. We claim that
$q_sK(sa)\subseteq R$.

Take $sax+u\in K(sa)$ with $x\in R$ and $u\in r_R(a)$. Then, for any
$y\in R$ and $z\in l_R(a)$, we have
$(yas+z)q_s(sax+u)=ya(sax+u)=yasax=(yas+z)ax$, hence
$L(as)(q_s(sax+u)-ax)=0$ which by the density of $L(as)$ implies that
$q_s(sax+u)=ax$. Hence $q_sK(sa)\subseteq R$, and it follows from
\cite{lanning} that $q_s$ does indeed belong to $Q_\sigma(R)$.
\end{proof}

 \begin{proposicion}\label{goldie-socle} Let $R$ be a semiprime associative algebra  with maximal symmetric algebra of quotients $Q_\sigma(R)$. For any $a \in R$ such that $R_a$ is Goldie  we have
 $a\in Soc( Q_\sigma(R))$.
 \end{proposicion}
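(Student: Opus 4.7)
We aim to show $a \in Soc(Q)$, where $Q := Q_\sigma(R)$. Since $R$ is semiprime, so is $Q$; the strategy is to exploit the elements $q_s$ of Lemma \ref{qsQsigma} to transfer the semisimple artinian structure of the classical quotient ring of $R_a$ into the associative local algebra $Q_a$, and from there conclude that $a$ lies in the socle of $Q$.

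The key preliminary observation is the pair of identities
\begin{equation*}
as q_s = a = q_s s a \quad \text{in } Q,
\end{equation*}
valid for every $\overline{s} \in Reg(R_a)$. Setting $z = 0$ in $(xas+z)q_s = xa$ yields $x(asq_s-a) = 0$ for every $x \in R$, and setting $u = 0$ in $q_s(sax+u) = ax$ yields $(q_s sa - a)x = 0$ for every $x \in R$; the density of $R$ in $Q$ together with the semiprimeness of $Q$ then forces the displayed identities.

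Next, by Goldie's theorem applied to the semiprime Goldie ring $R_a$, the classical two-sided ring of quotients $C$ of $R_a$ exists and is semisimple artinian, every element having the form $\overline{s}^{-1}\overline{r}$ with $\overline{s} \in Reg(R_a)$ and $\overline{r} \in R_a$. Passing to the local algebra $Q_a$ (with product $x \cdot_a y = xay$), the identities above show that the class $\overline{q_s}$ acts as the inverse of $\overline{s}$ relative to $\overline{a}$; hence each formal fraction $\overline{s}^{-1}\overline{r}$ has a canonical realization in $Q_a$ built from $\overline{q_s}$ and $\overline{r}$. This gives a ring homomorphism $\phi : C \to Q_a$ extending the natural inclusion $R_a \hookrightarrow Q_a$. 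Combined with the associative analogue of Lemma \ref{local-q} (which exhibits $Q_a$ as a ring of quotients of $R_a$) and with the fact that the maximal symmetric ring of quotients of a semiprime Goldie ring coincides with its classical quotient ring, this forces $Q_a = C$, hence $Q_a$ is semisimple artinian. Consequently, using the standard link between socle elements of a semiprime ring and their local algebras, we obtain $a \in Soc(Q)$.

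The main technical obstacle will be checking that $\phi$ is well-defined: distinct Ore representations $\overline{s}^{-1}\overline{r}$ of the same element of $C$ must yield the same class in $Q_a$. This reconciliation between the Ore relations among the regular elements of $R_a$ and the identities $asq_s = a = q_s sa$ in $Q$ is the heart of the argument and requires careful manipulation of the local product $\cdot_a$ together with the semiprimeness of $Q$.
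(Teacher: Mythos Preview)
Your overall strategy---show that $Q_a$ coincides with the classical quotient ring $C$ of $R_a$, hence is semisimple artinian, hence $a\in Soc(Q)$---is exactly the paper's strategy, and your derivation of the identities $asq_s=a=q_ssa$ matches the paper's. But two points deserve correction.

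First, the assertion that $\overline{q_s}$ is the inverse of $\overline{s}$ in $Q_a$ is not justified and does not follow from $asq_s=a=q_ssa$ alone: to say $\overline{s}\cdot_a\overline{q_s}$ is a unit you would need $a(saq_s)a=a$, and neither identity gives this. In fact, before one can speak of inverses at all, one must first produce a unit in $Q_a$. The paper does this explicitly: using that $\overline{sas}\in Reg(R_a)$, it sets $u=sq_{sas}s$ and checks (via $asasq_{sas}=a$ and density of $L(as)$) that $aua=a$, so $\overline{u}$ is the unit. It then exhibits the inverse of $\overline{s}$ as $\overline{p}$ with $p=sq_{sas}u$, verifying $asapa=aua$ directly. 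So the correct element involves $q_{sas}$, not $q_s$.

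Second, your ``main technical obstacle''---the well-definedness of the map $\phi:C\to Q_a$---is a detour you need not take. The paper instead uses the containment the other way: since $Q_a$ is an algebra of quotients of $R_a$ (the associative analogue of Lemma \ref{local-q} you invoke), one has $Q_a\subseteq Q_{max}(R_a)=C$. Once every $\overline{s}\in Reg(R_a)$ is shown to be invertible \emph{inside} $Q_a$ (the previous paragraph), the subring $Q_a$ of $C$ contains $R_a$ and all inverses of its regular elements, hence equals $C$. No Ore-relation bookkeeping is required, because the containment $Q_a\subseteq C$ makes well-definedness automatic: you are locating elements of $C$ in a subring, not defining a new map.
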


\begin{proof}
Take $a\in R$ and assume that $R_a$ is Goldie.
Take $s\in R$ with $\overline{s}\in Reg (R_a)$ and consider  $q_s\in Q_\sigma(R)$ as defined in \ref{elementoqs}. Clearly
$asq_s=q_ssa=a$ and $l_R(a)
q_s=q_sr_R(a)=0$. Since $\overline{sas}=\overline{s}\ \overline{s}\in Reg(R_a)$, the element $q_{sas}$ is also defined. We claim that $u=sq_{sas}s$ satisfies $aua=a$. Indeed
since $asasq_{sas}=a$, for all $x\in R$ and $z\in l_R(a)$ we have $(xas+z)(asq_{sas}sa-a)=xasa-xasa=0$, hence $L(as)(aua-a)=0$ which implies  $aua=a$ by the density of $L(as)$ in $R$.
As a result we get that $Q_\sigma(R)_a$ is unital with unit $\overline{u}$.

Now since  $Q_\sigma(R)_a$ is an algebra of quotients of $R_a$, we have  $Q_\sigma(R)_a\subseteq Q_{max}(R_a)$ (here $Q_{max}(R_a)$ stands for any the left or right maximal algebra of quotients of $R_a$). But $R_a$ being Goldie,
$ Q_{max}(R_a)$ is the algebra of $Reg(R_a)$-fractions of $R_a$ , hence it is generated by $R_a$ and the inverses of the elements of $Reg(R_a)$. Thus, to ensure that
$Q_\sigma(R)_a= Q_{max}(R_a)$ it is enough to prove that every element of  $Reg(R_a)$ is invertible in $Q_\sigma(R)_a$. Take  $\overline{s}\in Reg (R_a)$ and set $p=sq_{sas}u$ with $u=sq_{sas}s$.
 Then
$asapa=asasq_{sas}ua=aua$ and so $\overline{s}\ \overline{p}=\overline{u}$ in  $Q_\sigma(R)_a$. Hence $\overline{p}=\overline{s}^{-1}$.
Therefore $Q_\sigma(R)_a= Q_{max}(R_a)$, and since $R_a$ is Goldie, $Q_\sigma(R)_a= Q_{max}(R_a)$ is artinian (and semiprime) which implies that
$a\in Soc(Q_\sigma(R))$.
\end{proof}

 \begin{teorema}\label{casofprimo}
Let $J$ be a strongly prime Jordan algebra. Then $LC(J)=Soc
(Q_{max}(J))\cap J$.
 \end{teorema}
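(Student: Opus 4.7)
The plan is to establish the two inclusions $LC(J)\subseteq Soc(Q_{max}(J))\cap J$ and $Soc(Q_{max}(J))\cap J\subseteq LC(J)$ separately, noting that $0$ trivially lies in both sides.

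For the easier inclusion $Soc(Q_{max}(J))\cap J\subseteq LC(J)$, I would take $a\in Soc(Q_{max}(J))\cap J$. By the characterization of socle elements recalled in \ref{socle-def}, the local algebra $Q_{max}(J)_a$ has finite capacity, and in particular is unital. Lemma \ref{local-q}, applied to the algebra of quotients $Q_{max}(J)$ of $J$, then gives that $Q_{max}(J)_a$ is a general algebra of quotients of $J_a$, so Lemma \ref{eq} may be invoked to conclude that $J_a$ is a classical order in the unital finite-capacity Jordan algebra $Q_{max}(J)_a$. This is precisely the defining condition \ref{lc def} of an LC-element, so $a\in LC(J)$.

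For the reverse inclusion $LC(J)\subseteq Soc(Q_{max}(J))$, I would split the argument according to the dichotomy stated at the beginning of this section. If $J$ has a nonzero PI-element, then $LC(J)=PI(J)$ and for any nonzero $a\in LC(J)$ the local algebra $J_a$ is PI and strongly prime; its classical algebra of fractions $Q_a$ is a simple Jordan algebra of finite capacity that, being already saturated, should coincide with the maximal algebra of quotients $Q_{max}(J_a)$. By Lemma \ref{local-q}, $Q_{max}(J)_a$ is an algebra of quotients of $J_a$ and so embeds into $Q_{max}(J_a)=Q_a$ by universality; finite capacity of $Q_a$ passes to $Q_{max}(J)_a$, and \ref{socle-def} gives $a\in Soc(Q_{max}(J))$. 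In the PI-less case, let $R$ be a $\ast$-tight hermitian envelope of $J$, so that $LC(J)=F(J)=I_l(R)\cap J$. For any nonzero $a\in LC(J)$, Proposition \ref{local-goldie} yields that $R_a$ is Goldie, and Proposition \ref{goldie-socle} then places $a$ in $Soc(Q_\sigma(R))$.

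The main technical difficulty will be in the PI-less case, namely translating the associative membership $a\in Soc(Q_\sigma(R))$ into the Jordan membership $a\in Soc(Q_{max}(J))$. The intended route is to view $Q_{max}(J)$ as an ample Jordan subalgebra of $H_0(Q_\sigma(R),\ast)$ that remains $\ast$-tight over $J$; once that compatibility is in place, the Jordan local algebra $Q_{max}(J)_a$ sits inside the symmetric part of $Q_\sigma(R)_a$, which has finite capacity because $R_a$ being Goldie forces $Q_\sigma(R)_a$ to contain a semisimple artinian algebra of quotients of $R_a$. Finite capacity is inherited by ample Jordan subalgebras of such associative algebras, so $Q_{max}(J)_a$ itself has finite capacity, and \ref{socle-def} then delivers the required $a\in Soc(Q_{max}(J))$.
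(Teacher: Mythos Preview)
Your overall strategy agrees with the paper: the inclusion $Soc(Q_{max}(J))\cap J\subseteq LC(J)$ is handled exactly as you describe (via \ref{local-q} and \ref{eq}), and for the reverse inclusion the paper likewise splits into the PI and PI-less cases, invoking \ref{local-goldie} and \ref{goldie-socle} in the latter to obtain $a\in Soc(Q_\sigma(R))$.

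The divergence, and the place where your proposal has genuine gaps, is in the final ``translation'' step in each case. The paper avoids your local-algebra route entirely in favor of direct structural identifications. In the PI case it simply cites \cite[Theorem~5.1]{pi-ii}, which already gives $PI(J)=Soc(Q_{max}(J))\cap J$ once one knows $Q_{max}(J)=\Gamma^{-1}(J)J$ from \cite[Lemma~3.1(c), Theorem~3.5]{esenciales}; your argument that ``finite capacity of $Q_a$ passes to $Q_{max}(J)_a$'' because the latter embeds in the former is not valid as stated, since a subalgebra of a finite-capacity Jordan algebra need not itself have finite capacity, and nothing you have written forces $Q_{max}(J)_a$ to be all of $Q_{max}(J_a)$. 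In the PI-less case the paper uses the exact equality $Q_{max}(J)=H_0(Q_\sigma(R),\ast)$ from \cite[Theorem~4.10]{densos} (not merely an inclusion as an ample subspace) together with the socle compatibility $Soc(H_0(Q_\sigma(R),\ast))=Soc(Q_\sigma(R))\cap H_0(Q_\sigma(R),\ast)$ from \cite[Proposition~4.1(2)]{ft}, which yields $a\in Soc(Q_{max}(J))$ in one line. Your proposed detour through ``$Q_{max}(J)_a$ sits inside the symmetric part of $Q_\sigma(R)_a$'' would require identifying local algebras of $H_0(Q_\sigma(R),\ast)$ with ample subspaces of local algebras of $Q_\sigma(R)$ and then a finite-capacity inheritance statement for such subspaces; neither step is supplied, and the first is not entirely routine.
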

  \begin{proof}
If $J$ has nonzero PI-elements this follows directly from  \cite[Theorem 5.1]{pi-ii} since by \cite[Proposition 3.3]{lc} we have  $LC(J)=PI(J)$,  and    by \cite[Lemma 3.1(c), Theorem 3.5]{esenciales}   we have
$Q_{max}(J)=\Gamma^{-1}(J)J$. Therefore we can
assume that $J$ has no nonzero PI-elements. As a consequence, $J$ is special
\cite[Lemma 5.1]{fgm}, and we can fix a $\ast$-tight associative envelope $R$ of  $J$.

Let $a\in  LC(J)$. By \ref{local-goldie}, $R_a$ is Goldie, and therefore, by \ref{goldie-socle},
  $a\in Soc(Q_\sigma(R))$. Now since by
\cite[Theorem 4.10]{densos}, $Q_{max}(J)=H_0(Q_\sigma(R),\ast)$,  by
\cite[Proposition 4.1(2)]{ft}  we have
 $ a\in  Soc(Q_\sigma(R))\cap
J=Soc(Q_\sigma(R))\cap Q_{max}(J)\cap J=  Soc(Q_\sigma(R))\cap
H_0(Q_\sigma(R),\ast)\cap J =Soc(H_0(Q_\sigma(R),\ast))\cap
J=Soc(Q_{max}(J))\cap J$.

Conversely take $a\in Soc(Q_{max}(J))\cap J$. Then  by \ref{local-q} $Q_{max}(J)_a$ is
an algebra of quotients of $J_a$, which has finite capacity by \cite[Lemma 0.7(b)]{pi-i}. Thus $J_a$ is a classical order
in $Q_{max}(J)_a$ by \ref{eq},
  which implies that   $a\in LC(J)$.
  \end{proof}

%
%
\section{Nondegenerate Jordan algebras with nonzero LC-elements.}

\begin{apartado} Recall that a \emph{subdirect
product} of a collection of Jordan algebras $\{J_\alpha\}$ is any
subalgebra $J$ of the full direct product $\prod_\alpha J_\alpha$ of the algebras of that collection such that
the restrictions of the canonical projections $\pi_\alpha: J\to J_\alpha$ are onto. An
\emph{essential subdirect product} is a subdirect product which
contains an essential ideal of the full direct product. If $J$ is
actually contained in the direct sum of the $J_\alpha$, then $J$
is called an \emph{essential subdirect sum} \cite[p.448]{fgm}.
\end{apartado}

\begin{apartado}  Local algebras at nonzero LC-elements of   nondegenerate Jordan
algebras are essential subdirect products of finitely many
strongly prime Jordan algebras, and therefore they are essential direct sums of the corresponding factors. That is the case for the local algebra $J_a$ at a nonzero
LC-element $a$ of a nondegenerate Jordan algebra $J$. Indeed, by  \cite[Theorem 5.13]{lc}, \cite[10.3]{fgm}, and
\cite[Lemma 5.3]{lc}, the local algebra $J_a\cong
\Big(J/ann_J(id_J(a))\Big)_{a+ann_J(id_J(a))}$ is an essential
subdirect sum of finitely many strongly prime Jordan algebras.
These essential subdirect sums arise from the fact that for any LC-element $a\in  LC(J)$ the ideal $id_J(a)$ is  semi-uniform \cite[Proposition  5.10]{lc} in the following sense:  an ideal $I$ of a nondegenerate Jordan algebra $J$ is \emph{semi-uniform} if there exist prime ideals $P_1,\ldots,P_n$ of $J$  such that $P_1\cap \ldots \cap P_n\subseteq ann_J(I)$. For any semi-uniform ideal $I$ of a nondegenerate Jordan algebra $J$ there exists a unique minimal set of prime  ideals ${\cal P}=\{P_1,\ldots,P_n\}$ with $P_1\cap \ldots \cap P_n= ann_J(I)$, named (after the similar situation appearing in commutative ring theory) the \emph{set of prime ideals associated to} $I$.
\end{apartado}

\begin{lema}\label{quot_essential_ideal}
Let $J$ be a nondegenerate Jordan algebra. For any essential ideal $I$ of $J$ we have $Q_{max}(I)=Q_{max}(J)$.
\end{lema}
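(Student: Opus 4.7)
The plan is to show that $Q_{max}(I)$ and $Q_{max}(J)$ satisfy each other's universal property \ref{def}, producing mutually inverse isomorphisms fixing the base algebras. As a preliminary, I would note that since $I$ is essential in the nondegenerate $J$, the annihilator $\text{ann}_J(I)$ is an ideal of $J$ meeting $I$ trivially (a standard consequence of the characterization in \ref{annihilator_def} together with nondegeneracy), and hence vanishes by essentiality; consequently $I$ is itself nondegenerate, so $Q_{max}(I)$ exists. Moreover, $I$ is a dense inner ideal of $J$: the ideal property gives $(I:a)=I$ for every $a\in J$, so $(I:a_1:\cdots:a_n)=I$ for any finite family, and combined with $U_cI\ne 0$ for every $0\ne c\in J$ (which is equivalent to $c\notin\text{ann}_J(I)=0$), density follows from \ref{nii}.

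The first substantive step is to verify that $J$ is a general algebra of quotients of $I$ in the sense of \ref{aq}. For every $j\in J$, the ideal property makes every element of $I$ an $I$-denominator of $j$---all six conditions of \ref{denominadores} are immediate---so $\mathcal{D}_I(j)=I$ is dense in $I$; and $U_j\mathcal{D}_I(j)=U_jI\ne 0$ for $j\ne 0$ by $\text{ann}_J(I)=0$. The universal property \ref{def} of $Q_{max}(I)$ then produces a homomorphism $\phi:J\to Q_{max}(I)$ extending the identity on $I$, whose kernel is an ideal of $J$ meeting $I$ trivially and hence vanishes by essentiality. Identifying $J$ with $\phi(J)\subseteq Q_{max}(I)$, the transitivity of algebras of quotients (a consequence of the formalism of \cite{densos} applied to the chain $I\subseteq J\subseteq Q_{max}(I)$, in which both $J$ and $Q_{max}(I)$ are algebras of quotients of $I$) yields that $Q_{max}(I)$ is a general algebra of quotients of $J$. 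The universal property of $Q_{max}(J)$ then provides a homomorphism $\psi:Q_{max}(I)\to Q_{max}(J)$ fixing $J$.

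Symmetrically, transitivity applied to $I\subseteq J\subseteq Q_{max}(J)$ gives that $Q_{max}(J)$ is a general algebra of quotients of $I$, whence the universal property of $Q_{max}(I)$ delivers a homomorphism $\theta:Q_{max}(J)\to Q_{max}(I)$ fixing $I$. By the uniqueness in \ref{def}, $\psi\circ\theta$ and $\theta\circ\psi$ agree with the identities on the respective base algebras and are therefore equal to the identity maps, establishing $Q_{max}(I)=Q_{max}(J)$. The main technical obstacle is the concrete content of the transitivity step, which ultimately reduces to showing that for each $q\in Q_{max}(I)$ the set $\mathcal{D}_J(q)$ is a dense inner ideal of $J$. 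I would handle this by checking, through the standard Jordan identities, that the derived inner ideal $\mathcal{D}_I(q)^{(1)}=U_{\mathcal{D}_I(q)}\mathcal{D}_I(q)$ is contained in $\mathcal{D}_J(q)$, and then combining its denseness in $I$ with the denseness of $I$ in $J$. The delicate point is that $\mathcal{D}_I(q)$ itself is only an inner ideal of $I$ and not in general of $J$, so one is forced to work with the derived ideal (which is absorbed by $\mathcal{D}_J(q)$ whether or not the generating set is an inner ideal of $J$); the context-free criterion recalled in \ref{denominadores}, which yields $x^4\in\mathcal{D}_J(q)$ for every $x\in\mathcal{D}_I(q)$, offers an alternative route to the same conclusion.
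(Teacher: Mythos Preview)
Your argument is correct and is essentially the paper's proof with the cited black boxes from \cite{densos} unpacked by hand: the paper simply quotes \cite[Examples~2.3.1]{densos} for $J$ being an algebra of quotients of $I$, \cite[Proposition~2.8]{densos} for the two transitivity steps you work out explicitly, and \cite[Lemma~2.12]{densos} for the maximality/uniqueness step you obtain by composing $\psi$ and $\theta$. One small slip: the uniqueness of the comparison homomorphism you invoke at the end is not literally part of \ref{def}, but it is the standard content of \cite[Lemma~2.12]{densos}.
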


\begin{proof}
Let $I$ be an essential ideal of $J$.  Then, by \cite[Examples 2.3.1]{densos}
 $J$ is an algebra of quotients of $I$
 and therefore $I\subseteq J\subseteq Q_{max}(J)$ is a sequence of algebras of quotients which implies by   \cite[Proposition 2.8]{densos} that
 $Q_{max}(J)$ is an algebra of quotients of $I$.
 But since $I$ is nondegenerate \cite{mc-inh}, $I$ has a maximal algebra of quotients $Q_{max}(I)$ whose  maximality implies that $Q_{max}(J)\subseteq Q_{max}(I)$ \cite[Lemma 2.12]{densos}.

  Now, again by \cite[Proposition 2.8]{densos} we have that $Q_{max}(I)$ is an algebra of quotients of $J$, and thus by the maximality of $Q_{max}(J)$ \cite[Lemma 2.12]{densos}, it follows that  $Q_{max}(I)\subseteq Q_{max}(J)$. Hence
$Q_{max}(I)=Q_{max}(J)$
\end{proof}

\begin{lema}\label{qideales} Let J be a nondegenerate Jordan algebra, and let $I$ and $L$ be ideals of $J$ such that $J=I\oplus L$. Then $Q_{max}(J)=Q_{max}(I)\oplus Q_{max}(L)$.
\end{lema}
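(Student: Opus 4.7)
The plan is to show that $\widetilde Q:=Q_{max}(I)\oplus Q_{max}(L)$ is itself a maximal algebra of quotients of $J$, so that the uniqueness in \ref{def} yields the desired isomorphism. The argument uses throughout the orthogonality of $I$ and $L$ as ideals of the nondegenerate algebra $J=I\oplus L$, namely $U_IL=U_LI=0$ and $\{I,L,\widehat J\}=\{L,I,\widehat J\}=0$, which makes all Jordan operations decompose along the direct sum without cross terms, a property that also holds in $\widetilde Q$ since it is a direct sum of Jordan algebras.

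First I would verify that $\widetilde Q$ is a general algebra of quotients of $J$ in the sense of \ref{aq}. Given $q=(q_I,q_L)\in\widetilde Q$ and the dense denominators $D_I={\cal D}_I(q_I)$ and $D_L={\cal D}_L(q_L)$, orthogonality shows directly that $D_I\oplus D_L\subseteq {\cal D}_J(q)$, because each of the conditions (Di)--(Div') from \ref{denominadores} splits into the corresponding condition for $q_I$ acting on $I$ and $q_L$ acting on $L$. Density of $D_I\oplus D_L$ in $J$ follows from an inductive decomposition
\[
((D_I\oplus D_L):a_1:\cdots:a_n)=(D_I:a_{1,I}:\cdots:a_{n,I})_I\oplus(D_L:a_{1,L}:\cdots:a_{n,L})_L,
\]
together with $U_c$ acting diagonally as $U_{c_I}(\cdot)+U_{c_L}(\cdot)$ for $c=c_I+c_L$, which is nonzero whenever $c\neq 0$ by density of $D_I$ or $D_L$. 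Condition (AQ2) reduces routinely to (AQ2) for $Q_{max}(I)$ and $Q_{max}(L)$. The maximality of $Q_{max}(J)$ then produces a homomorphism $\alpha:\widetilde Q\to Q_{max}(J)$ fixing $J$, which is injective by the standard argument: a nonzero $q\in\ker\alpha$ would give $U_q{\cal D}_J(q)\subseteq J$ both fixed by $\alpha$ and sent to zero, forcing $U_q{\cal D}_J(q)=0$ and contradicting (AQ2) for $\widetilde Q$.

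For surjectivity of $\alpha$ I plan to show that $\widetilde Q$ itself satisfies the maximality property: for any algebra of quotients $Q'$ of $J$ there is a homomorphism $Q'\to\widetilde Q$ fixing $J$; applied to $Q'=Q_{max}(J)$ this yields the inverse of $\alpha$. The orthogonality of $I$ and $L$ in $J$ lifts to $Q'$, so $I\subseteq ann_{Q'}(L)$ and $L\subseteq ann_{Q'}(I)$. The crucial step is to establish the splitting $Q'=\widetilde I'\oplus \widetilde L'$ into orthogonal ideals with $I\subseteq\widetilde I'$ and $L\subseteq\widetilde L'$ such that $\widetilde I'$ (resp.\ $\widetilde L'$) is an algebra of quotients of $I$ (resp.\ $L$). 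Once this is done, maximality of $Q_{max}(I)$ and $Q_{max}(L)$ provides homomorphisms $\widetilde I'\to Q_{max}(I)$ and $\widetilde L'\to Q_{max}(L)$ fixing $I$ and $L$, which assemble into the required map $Q'\to\widetilde Q$. The main obstacle is precisely this decomposition of an arbitrary algebra of quotients $Q'$ of $J=I\oplus L$: showing that every $q'\in Q'$ decomposes uniquely as the sum of an element annihilating $L$ and an element annihilating $I$. This is approached by analyzing the projections of a dense inner ideal of $J$-denominators of $q'$ along the $I\oplus L$ splitting of $J$ (a computation dual to the density argument in the first step) and then lifting the decomposition to $Q'$ using the nondegeneracy of $Q'$ together with (AQ2).
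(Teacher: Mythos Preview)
Your first half (that $\widetilde Q=Q_{max}(I)\oplus Q_{max}(L)$ is a general algebra of quotients of $J$ and that the induced $\alpha:\widetilde Q\to Q_{max}(J)$ is injective) is fine and follows routinely from orthogonality. The difficulty is the surjectivity step, and here your outline has a genuine gap. You claim that an arbitrary algebra of quotients $Q'$ of $J$ splits as $\widetilde I'\oplus\widetilde L'$ with $I\subseteq\widetilde I'$, $L\subseteq\widetilde L'$, and then appeal to maximality of $Q_{max}(I)$ and $Q_{max}(L)$ on each summand. But you have not shown that such a splitting exists inside $Q'$. Knowing that the $J$-denominators of $q'\in Q'$ decompose along $I\oplus L$ lets you see how $q'$ \emph{acts} on $I$ and on $L$ separately, and that action does determine an element of $Q_{max}(I)$ (and of $Q_{max}(L)$); what it does not give is that those putative components $q'_I$, $q'_L$ already lie in $Q'$. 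Equivalently, one easily gets $ann_{Q'}(I)\cap ann_{Q'}(L)=0$ and $J\subseteq ann_{Q'}(L)+ann_{Q'}(I)$, but the equality $ann_{Q'}(L)+ann_{Q'}(I)=Q'$ is exactly the point at issue, and ``lifting the decomposition to $Q'$ using nondegeneracy together with (AQ2)'' is not a proof. (Note also that $L$ is only known to be an ideal of $J$, not of $Q'$, so even identifying $\widetilde I'$ with $ann_{Q'}(L)$ as an \emph{ideal} of $Q'$ needs care.)

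The paper avoids all of this. Its proof is essentially two citations: nondegeneracy gives $I=ann_J(L)$ and $L=ann_J(I)$; then $L$ is (isomorphic to) an essential ideal of $J/ann_J(I)$, so Lemma~\ref{quot_essential_ideal} yields $Q_{max}(L)=Q_{max}(J/ann_J(I))$, and the conclusion follows from \cite[Lemma~5.6]{densos}, which already packages the needed decomposition of $Q_{max}(J)$ along quotients by annihilator ideals. If you want to keep your direct route, the cleanest repair is not to try to split $Q'$ internally but to build the map $Q'\to\widetilde Q$ directly: for $q'\in Q'$ use the $I$-part of ${\cal D}_J(q')$ to manufacture an element of $Q_{max}(I)$ via the construction of the maximal algebra of quotients (not merely via its universal property), and similarly for $L$; this bypasses the unproven internal splitting.
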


\begin{proof}
The nondegeneracy of $J$ implies that $I=ann_J(L)$ and $L=ann_J(I)$, and since $L$ is isomorphic to an essential ideal of the nondegenerate Jordan algebra $J/ann_J(I)$ \cite[Lemma 1(i)]{fg2}, by \ref{quot_essential_ideal} we have $Q_{max}(L)=Q_{max}(J/ann_J(I))$, and the result  follows from \cite[Lemma 5.6]{densos}.
\end{proof}

We next extend \ref{casofprimo} to nondegenerate algebras.

\begin{teorema}\label{caso general}
Let $J$ be a nondegenerate Jordan algebra and $Q_{max}(J)$ be its maximal algebra of quotients.    Then
$LC(J)=J\cap Soc(Q_{max}(J))$.
\end{teorema}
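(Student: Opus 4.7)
My plan splits the equality $LC(J) = J \cap Soc(Q_{max}(J))$ into two containments, handling the easy one directly and reducing the hard one to the strongly prime case already proved in \ref{casofprimo}.

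For the containment $J \cap Soc(Q_{max}(J)) \subseteq LC(J)$, I would argue exactly as in the last paragraph of the proof of \ref{casofprimo}. Given $a \in J \cap Soc(Q_{max}(J))$, the characterization of socle elements in \ref{socle-def} says that $Q_{max}(J)_a$ has finite capacity. By \ref{local-q}, this local algebra is a general algebra of quotients of $J_a$, and being unital of finite capacity, \ref{eq} forces $J_a$ to be a classical order in $Q_{max}(J)_a$; that is, $J_a$ is LC and $a \in LC(J)$.

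For the reverse containment, my strategy is to decompose $Q_{max}(J)$ in accordance with the semi-uniform structure of $id_J(a)$ and then apply \ref{casofprimo} componentwise. Given $a \in LC(J)$, I set $I = id_J(a)$, which is semi-uniform with associated primes $P_1, \ldots, P_n$. Since $J$ is nondegenerate, $I \cap ann_J(I) = 0$, and a routine argument shows that $I \oplus ann_J(I)$ is essential in $J$; by \ref{quot_essential_ideal} and \ref{qideales},
\[
Q_{max}(J) = Q_{max}(I) \oplus Q_{max}(ann_J(I)),
\]
and because $a \in I$, it suffices to prove $a \in Soc(Q_{max}(I))$. The canonical map identifies $I$ with its image $\bar I$ in $\bar J := J/ann_J(I)$, and $\bar I$ is essential in $\bar J$ (any ideal $K$ of $J$ with $ann_J(I) \subseteq K$ and $K \cap (I + ann_J(I)) \subseteq ann_J(I)$ forces $U_k I \subseteq I \cap ann_J(I) = 0$, so $K = ann_J(I)$). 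Hence $Q_{max}(I) = Q_{max}(\bar J)$ by \ref{quot_essential_ideal}, and because $\bar J$ is an essential subdirect product of the strongly prime algebras $\bar J_i := J/P_i$, a further combination of \ref{quot_essential_ideal} and \ref{qideales} applied to the direct sum $\bar J_1 \oplus \cdots \oplus \bar J_n$ yields
\[
Q_{max}(\bar J) = Q_{max}(\bar J_1) \oplus \cdots \oplus Q_{max}(\bar J_n).
\]
Under this decomposition, $a$ corresponds to the tuple $(a_1, \ldots, a_n)$ with $a_i$ the image of $a$ in $\bar J_i$, so the problem reduces to showing $a_i \in Soc(Q_{max}(\bar J_i))$ for each $i$, which by \ref{casofprimo} is the same as $a_i \in LC(\bar J_i)$.

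The main obstacle, and the last point I need to verify, is the descent of the LC property from $a$ to each $a_i$. The natural route is through the isomorphism $J_a \cong \bar J_{\bar a}$ of \ref{local algebras} together with the fact that $\bar J_{\bar a}$ is itself an essential subdirect sum of the strongly prime local algebras $(\bar J_i)_{a_i}$; since LC is characterized by the existence of a classical algebra of quotients of finite capacity, this property transfers from $J_a$ to the ambient direct sum $\bigoplus (\bar J_i)_{a_i}$ and hence to each summand, yielding $a_i \in LC(\bar J_i)$ and completing the argument.
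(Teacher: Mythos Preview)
Your proposal is correct and follows essentially the same approach as the paper: both handle the containment $J\cap Soc(Q_{max}(J))\subseteq LC(J)$ identically, and for the reverse direction both exploit the semi-uniformity of $id_J(a)$ to pass to $\bar J = J/ann_J(id_J(a))$, decompose $Q_{max}(\bar J)$ as $\bigoplus_i Q_{max}(\bar J_i)$ via \ref{quot_essential_ideal} and \ref{qideales}, and then apply \ref{casofprimo} componentwise. Your explicit splitting $Q_{max}(J)=Q_{max}(I)\oplus Q_{max}(ann_J(I))$ makes transparent a step the paper compresses into its final sentence. The only place where you are vaguer than the paper is the descent $a\in LC(J)\Rightarrow a_i\in LC(\bar J_i)$: the paper simply invokes \cite[Proposition~5.12]{lc}, whereas your sketch via essential subdirect sums of the local algebras $(\bar J_i)_{a_i}$ has the right shape but tacitly relies on the fact that the LC property passes upward from an algebra to an essential over-algebra (and then down to direct summands), which is precisely the content of that cited proposition.
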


\begin{proof}
Assume first that $LC(J)=0$ and take $a\in J\cap Soc(Q_{max}(J))$. By \ref{local-q}, $Q_{max}(J)_a$ is an algebra of quotients of $J_a$, and since $a\in Soc(Q_{max}(J))$,
$Q_{max}(J)_a$ has finite capacity. Thus  $a\in LC(J)=0$  by \ref{lc def}.

Assume now that  $LC(J)\neq 0$, and
let  $a$ be nonzero LC-element of $J$.
By \cite[10.3]{fgm} and \cite[Proposition 5.6]{lc}, $J_a$ is semi-uniform, and so is the ideal $id_J(a)$ generated by $a$ in $J$ by \cite[Proposition 5.10 (ii)]{lc}. Hence
$\overline{J}=J/ann_J(id_J(a))$ is an
essential subdirect sum of finitely many strongly prime Jordan
algebras $J_1,\ldots, J_n$  \cite[Proposition 5.4]{lc}, that is, $M\subseteq _{ess}
\overline{J}\cong J_1\oplus\cdots\oplus J_n$,  where $M$ is an
essential ideal of $J_1\oplus\cdots\oplus J_n$ contained into
$\overline{J}$ (see the proof of \cite[Theorem 5.13]{lc}).
 By \ref{quot_essential_ideal}, we have $ Q_{max}(M)\cong Q_{max}(\overline{J})\cong
Q_{max}(J_1\oplus\cdots\oplus J_n)$. Moreover   \cite[Remark 5.7]{densos} applies here implying
that $Q_{max}(\overline{J})\cong Q_{ max}(J_1)\oplus\cdots\oplus
Q_{max}(J_n)$ since $J_i=J/P_i$,
where $\{P_1,\ldots,P_n\}$ is the set of minimal prime ideals
associated to $id_J(a)$ (see also \ref{qideales}).

Write $\overline{a}=\overline{a}_1+\cdots+\overline{a}_n$ with
$\overline{a}_i\in J_i$. By \cite[Proposition 5.12]{lc}, we have $\overline{a}_i\in
LC(J_i)$,   and since $J_i$ is strongly prime,  $\overline{a}_i\in Soc(Q_{max}(J_i))\cap J_i$ by
\ref{casofprimo}. Therefore we get
$\overline{a}=\overline{a}_1+\cdots+\overline{a}_n\in
Soc(Q_{max}(J_1))\oplus\cdots\oplus
Soc(Q_{max}(J_n))=Soc(Q_{max}(J_1)\oplus\cdots\oplus
Q_{max}(J_n))=Soc(Q_{max}(\overline{J}))$.

On the other hand, the Jordan algebra $\overline{J}$ contains the essential ideal
$(id_J(a)+ann_J(id_J(a)))/ann_J(id_J(a))$, isomorphic to $id_J(a)$. Thus we have $Q_{max}(\overline{J})\cong
Q_{max}(id_J(a))$ by \ref{quot_essential_ideal}, and therefore, $a\in Soc(Q_{max}(id_J(a)))\cap
id_J(a)$. Finally, making use again of  \ref{quot_essential_ideal} gives $a\in
Soc(Q_{max}(J))\cap J$.

As for the reverse containment $Soc(Q_{max}(J))\cap J\subseteq LC(J)$, it suffices to note that the proof of \ref{casofprimo} still works here.
\end{proof}

%
%

\section{Local orders in Jordan algebras}

 In order to do this section as self-contained as possible we include here the quadratic versions of some of the results proved in \cite{fg1,fg2} for
 linear Jordan algebras, but not always their complete proofs
 as they can be easily obtained  from those given in
 \cite{fg1}. However we outline some of those proofs in order to stress the necessary quadratic references.

\begin{apartado}\label{localyinvert}  An element $x$ of a Jordan algebra $J$ is said to be
\emph{locally invertible} if there exists a (necessarily unique) idempotent
$e=P(x)$ such that $x$ is invertible in the unital Jordan algebra
$U_eJ$. We denote by  $  LocInv(J)$ the set of all locally
invertible elements of $J$. The inverse $x^\sharp$ of $x$ in
$U_eJ$ is called the \emph{generalized inverse} of $x$.
The following equivalent characterizations of $x^\sharp$ are given in
\cite{fgss} (see also \cite[p. 1033]{fg1}):
\begin{enumerate}
\item[(i)] $U_xx^\sharp=x$, $U_{x^\sharp}x=x^\sharp$ and $U_{x^\sharp}U_x=U_xU_{x^\sharp}$.
\item[(ii)] $U_xx^\sharp=x$ and $U_xU_{x^\sharp}x^\sharp=x^\sharp$.
\item[(iii)] $U_xx^\sharp=x$ and $(x^\sharp)^2\circ x=x^\sharp$.
\end{enumerate}
 The idempotent $e=P(x)$ determined by the locally invertible element $x\in J$ is given by
$e=U_x(x^\sharp)^2=U_{x^\sharp}x^2$.  Recall that $x\in J$ is locally invertible if and only if $x$ is strongly regular (i.e. $x\in U_{x^2}J$) \cite[p. 1032]{fg1}.
\end{apartado}

\begin{apartado} A subalgebra $J$ of a (non necessarily unital)
 Jordan algebra $Q$ is a \emph{weak local order} in $Q$
if for each $q\in Q$ there exists $x\in LocInv(Q)\cap J$ such that
$q\in U_xQ$ with $U_xJ$ being an order in the unital Jordan algebra
$U_xQ=U_eQ$ ($e=P(x)$) relative to some monad $S$ of $U_xJ$.
\end{apartado}

The notion of weak local order was introduced for non-necessarily unital Jordan algebras as substitute of the notion of  order relative to a monad in nonunital Jordan algebras. These are indeed particular cases of weak local orders.

\begin{proposicion}\label{proposicion 12 de fg1} Let $J$ be a Jordan algebra which is
an $S$-order in a unital Jordan algebra $Q$. For every $s\in S$, $U_sS$ is a monad of $U_sJ$, and
$U_sJ$ is an $U_sS$-order in $Q$. In particular $J$ is a
weak local order in $Q$.
\end{proposicion}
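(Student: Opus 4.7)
The plan is to establish four things in order: (a) $U_sJ$ is a Jordan subalgebra of $Q$, (b) $U_sS$ is a monad of $U_sJ$, (c) the order conditions (ClQ1), (ClQ3) and (ClQ2) hold for the pair $(U_sJ,U_sS)$ inside the unital $Q$, and (d) deduce from this that $J$ is a weak local order in $Q$.

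Items (a) and (b) rest on the fundamental identity $U_{U_sj}=U_sU_jU_s$ together with $U_s^2=U_{s^2}$. For $U_sj,U_sk\in U_sJ$ one gets $U_{U_sj}(U_sk)=U_s(U_jU_{s^2}k)$ and $(U_sj)^2=U_{U_sj}1_Q=U_s(U_js^2)$, both in $U_sJ$ since $s^2\in J$ and $J$ is a subalgebra; the same computation with $j,k\in S$, combined with the monad properties of $S$, gives that $U_sS$ is closed under the induced $U$ and squaring of $U_sJ$. Condition (ClQ1) is then immediate: $U_{U_st}=U_sU_tU_s$ is a composition of invertible operators on $Q$, whence $U_st$ is invertible in $Q$, with explicit inverse $U_{s^{-1}}t^{-1}$. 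For (ClQ3), I unwrap $U_{U_st_i}(U_sS)=U_s(U_{t_i}U_{s^2}S)$ and invoke the original (ClQ3) applied to the pair $U_{t_1}s^2,\,U_{t_2}s^2\in S$; the resulting equality $U_{U_{t_1}s^2}w_1=U_{U_{t_2}s^2}w_2$ expands, via the fundamental identity, to $U_{t_1}U_{s^2}t_3=U_{t_2}U_{s^2}t_4$ for suitable $t_3,t_4\in S$, and applying $U_s$ to both sides produces the desired common element of $U_{m_1}(U_sS)\cap U_{m_2}(U_sS)$.

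The main obstacle is (ClQ2): given $q\in Q$, produce a $U_sJ$-denominator of $q$ inside $U_sS$. My strategy is to combine $J$-denominators of $q$ and of $U_sq$ via the inner-ideal structure of $\mathcal{D}_J(\cdot)$ and iterated applications of (ClQ3). I pick $r\in S$ a $J$-denominator of $q$ and $r'\in S$ a $J$-denominator of $U_sq$; two applications of (ClQ3), first to $r,r'$ and then to the resulting element and $s$, together with the fact that $U_aJ\subseteq\mathcal{D}_J(b)$ whenever $a\in\mathcal{D}_J(b)$, deliver a single $m\in U_sS\cap\mathcal{D}_J(q)\cap\mathcal{D}_J(U_sq)$. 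Writing $m=U_sb$ with $b\in S$ and invoking the context-free criterion of \ref{denominadores}, it then suffices to construct an $x$ built from $m$, still in $U_sS$, for which $x\circ q,\,U_xq\in U_sJ$; then $x^4\in\mathcal{D}_{U_sJ}(q)$ and lies in $U_sS$ by monad closure. The delicate point, and the reason (ClQ2) is the hard step, is that while $U_mq\in J$ is automatic from the $J$-denominator conditions on $m$, landing inside the smaller $U_sJ$ requires absorbing the implicit $U_{s^{-1}}$-factor hidden in the representation $m=U_sb$; this is achieved by adding an outer $U_s$-conjugation in the construction of $x$, exploiting both that $m$ is a denominator of $U_sq$ (and not only of $q$) and that $U_sJ\subseteq J$, so that the remaining conditions (Dii), (Diii') and (Div) fall out of the denominator conditions on $m$ applied to the $U_s$-shifted data together with the fundamental identity.

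For the concluding assertion, every $s\in S$ is invertible in the unital $Q$, so $P(s)=1_Q$ and $U_sQ=Q$; hence any $q\in Q$ lies in $U_sQ$ trivially. Combined with the preceding proof that $U_sJ$ is an $U_sS$-order in $U_sQ=Q$, this is exactly the definition of $J$ being a weak local order in $Q$, realised with the same $s\in LocInv(Q)\cap J$ serving uniformly for every $q\in Q$.
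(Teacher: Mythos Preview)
Your approach is essentially the same as the paper's, which simply refers to \cite[Proposition 12]{fg1} and notes that the linear argument there carries over to the quadratic setting by replacing the relevant lemma with its quadratic counterpart \cite[Lemma 2.2]{fgm}. Your sketch is in fact considerably more explicit than the paper's one-line proof: you spell out the direct verification of the monad property, (ClQ1) and (ClQ3) via the fundamental identity, and you correctly isolate (ClQ2) as the substantive step, proposing to combine $J$-denominators of $q$ and of $U_sq$ through iterated use of (ClQ3) and the inner-ideal property of ${\cal D}_J(\cdot)$, and then to invoke the context-free $x^4$-criterion of \ref{denominadores}.

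The one place where your write-up is thin is the actual construction of the element $x\in U_sS$ with $x\circ q,\,U_xq\in U_sJ$: you describe the mechanism (``adding an outer $U_s$-conjugation'') but do not exhibit $x$ or verify the circle-product condition $x\circ q\in U_sJ$, which is the computation that genuinely requires the quadratic identities. This is not a gap in the sense of a wrong idea---the argument does go through along the lines you indicate, and it is exactly what \cite[Proposition 12]{fg1} does in the linear case---but a referee would ask you to display the element and the two containments explicitly rather than leave them as a promise. Since the paper itself omits these details by citation, your proposal is at least as complete as the published proof.
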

\begin{proof} A proof similar to that of \cite[Proposition 12]{fg1} works here, replacing the
 linear references used there by the their quadratic counterparts
which can be found for example in \cite[Lemma 2.2]{fgm}.
\end{proof}

\begin{apartado} The concept of local order upon which the theory developed in \cite{fg1, fg2} lies is based on the following notion adapted from the associative theory. An element $x\neq 0$ in a Jordan algebra $J$ is called \emph{semiregular} if $ann_J(x)=ann_J(x^2)$. The set of all semiregular elements of $J$ is denoted by $SemiReg(J)$. In the present paper, however, we will adopt a different Jordan analogue of the homonymous arrogative notion that follows the approach of \cite{fgm} to Jordan rings of fractions, and in particular, the use of the notion of injective element instead that of regular element.  An element $x\in J$ will be said to be \emph{ semi-injective} if for any $y\in J$, $U_x^2y=0$ implies $U_xy=0$, which implies (and is equivalent, if $J$ is nondegenerate) the condition $Ker\, x=Ker\,x^2$. We denote by $SemiInj(J)$ the set of all nonzero semi-injective elements of $J$.

For a subalgebra $J$ of a Jordan algebra $Q$, the containments $LocInv(J)\subseteq
J\cap LocInv(Q)\subseteq SemiInj(J)\subseteq SemiReg(J)$  are straightforward. Moreover the equality holds for nondegenerate Jordan algebras satisfying the descending chain condition on principal inner ideals, since  \cite[Proposition 17]{fg1} remains true in the quadratic setting. The proof is essentially the one given there, taking into account the obvious changes required by the differences in the notions of annihilators in quadratic and linear Jordan algebras (see \ref{annihilator_def}).
\end{apartado}

Semi-injectivity has the following straihgtforward consequence in terms of local algebras:

\begin{lema}\label{localsemiinject}Let $J$ be a Jordan algebra. If $x\in SemiInj(J)$ then the subalgebra $U_xJ$ of $J$ is isomorphic to the local algebra $J_{x^2}$.
\end{lema}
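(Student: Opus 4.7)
The plan is to construct an explicit isomorphism $\phi\colon J_{x^2} \to U_xJ$ given by $\phi(\bar y) = U_xy$, where $\bar y = y + Ker\,x^2$ denotes the class of $y\in J = J^{(x^2)}$ in $J_{x^2}$. Note that $U_xJ$ is a principal inner ideal of $J$, hence automatically a subalgebra, so the target makes sense. The two Jordan identities that carry the proof are the fundamental formulas $U_{x^2}=U_x^2$ and $U_{U_xy}=U_xU_yU_x$ (both among the QJ identities of \cite{jac-struc}).

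The first step is well-definedness. If $y\in Ker\,x^2$ then $U_{x^2}y=0$, which by the fundamental formula is $U_x(U_xy)=0$; the semi-injectivity hypothesis on $x$ then forces $U_xy=0$, so $\phi$ descends to the quotient $J_{x^2}$. Next I would check that $\phi$ respects the two structural operations of the local algebra, namely $U^{(x^2)}_yz = U_yU_{x^2}z$ and $y^{(2,x^2)} = U_yx^2$. The $U$-product is direct:
$$\phi(\overline{U^{(x^2)}_yz}) = U_xU_yU_x^2z = U_{U_xy}(U_xz) = U_{\phi(\bar y)}\phi(\bar z),$$
and the square, computed inside the unital hull $\widehat J$ using $x^2 = U_x\hat 1$, gives
$$\phi(\overline{y^{(2,x^2)}}) = U_xU_yU_x\hat 1 = U_{U_xy}\hat 1 = (U_xy)^2 = \phi(\bar y)^2.$$

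Surjectivity of $\phi$ is immediate. For injectivity, assume $\phi(\bar y)=0$, i.e.\ $U_xy=0$. Then $U_{x^2}y=U_x(U_xy)=0$, and since $U_{U_xy}=0$ by the fundamental formula, we get $U_xU_yx^2 = U_xU_yU_x\hat 1 = U_{U_xy}\hat 1 = 0$, and hence $U_{x^2}U_yx^2 = U_x(U_xU_yx^2)=0$. Both defining conditions of $Ker\,x^2$ in \ref{local algebras} are therefore satisfied, so $y\in Ker\,x^2$ and $\bar y=0$.

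The only point meriting care — rather than being a serious obstacle — is the injectivity verification: because the lemma is stated for arbitrary (not necessarily nondegenerate) $J$, both defining conditions of $Ker\,x^2$ must be checked, and not just $U_{x^2}y=0$. Semi-injectivity combined with the fundamental formula handles the two conditions uniformly, and the argument is otherwise a routine transcription of the definitions.
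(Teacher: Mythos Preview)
Your proof is correct and follows essentially the same approach as the paper: both use the map $y\mapsto U_xy$ as a surjective homomorphism $J^{(x^2)}\to U_xJ$ and identify its kernel with $Ker\,x^2$ via semi-injectivity. The paper compresses this into one sentence, while you spell out the homomorphism verification and the two kernel conditions explicitly; your added care about checking both defining conditions of $Ker\,x^2$ in the non-nondegenerate case is appropriate and matches what the paper leaves implicit.
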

\begin{proof}Since the mapping $U_x:J\rightarrow U_xJ$ obviously defines a surjective homomorphism $J^{(x^2)}\rightarrow U_xJ$, it suffices to notice that its kernel is $Ker\,x= Ker\,x^2$ since $x$ is semi-injective.
\end{proof}

\begin{apartado}  A subalgebra $J$ of a Jordan algebra $Q$ will be said to be a \emph{local order}
in $Q$ if it satisfies:
\begin{enumerate}\item[(LO1)]$SemiInj(J)\subseteq LocInv(Q)$, and
\item[(LO2)]for every $q\in Q$
there exists $x\in SemiInj(J)$ such that $q\in U_xQ$, and $U_xJ$
is a classical order in $U_eQ=U_xQ$ ($e=P(x)$).\end{enumerate}\end{apartado}

Property (LO2) in the definition of local order includes the assertion that, for some $x\in SemiInj(J)$, $U_xJ$ is a classical order in $U_xQ$. This can be replaced by the following assertion:

\begin{lema}\label{quotlocalsemiinj}Let $J$ be a subalgebra of an algebra $Q$. If $x\in SemiInj(J)$, $U_xJ$ is a classical order in $U_xQ$ if and only if $J_{x^2}$ is a classical order in $Q_{x^2}$ (with the obvious identification $(J_{x^2} = J+Ker_Qx^2)/Ker_Qx^2$.)
\end{lema}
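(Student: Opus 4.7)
The plan is to identify both inclusions with a single common structure by means of Lemma \ref{localsemiinject}, which, since $x\in SemiInj(J)$, gives an algebra isomorphism $\phi_J\colon J_{x^2}\to U_xJ$, $[y]\mapsto U_xy$. The same lemma applied to $Q$ would provide an isomorphism $\phi_Q\colon Q_{x^2}\to U_xQ$, $[q]\mapsto U_xq$, provided $x\in SemiInj(Q)$; by construction $\phi_Q$ would then restrict to $\phi_J$ along the natural inclusion $J_{x^2}=(J+Ker_Qx^2)/Ker_Qx^2\hookrightarrow Q_{x^2}$ specified in the statement. Once this pair of compatible isomorphisms is available, the two classical-order assertions become identical, since being a classical order is invariant under isomorphisms that preserve the subalgebra.

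The proof therefore reduces to deriving $x\in SemiInj(Q)$ from each of the two hypotheses. For the direction assuming $U_xJ$ is a classical order in $U_xQ$, the overalgebra $U_xQ$ is unital by \ref{classical_order}; writing its identity as $e=U_xb$ with $b\in Q$, for any $q\in Q$ satisfying $U_{x^2}q=0$ the element $U_xq$ lies in $U_xQ$ and is fixed by $U_e$, so $U_xq=U_e(U_xq)=U_{U_xb}(U_xq)=U_xU_bU_{x^2}q=0$, giving $x\in SemiInj(Q)$. For the converse direction, starting from $J_{x^2}$ being a classical order in the unital algebra $Q_{x^2}$, I would use the classical order assumption to obtain, for the identity $[u]$ of $Q_{x^2}$, an injective denominator $[s]\in J_{x^2}$, and then exploit the identity-denominator equations, together with the semi-injectivity of $s\in J$ transported via $\phi_J$, to show that the kernel of $U_x$ on $Q$ contains $Ker_Qx^2$, yielding $x\in SemiInj(Q)$.

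The delicate step is this converse direction, since unitality of $Q_{x^2}$ by itself only ensures strong regularity of $x^2$ in $Q$ and not of $x$; nilpotent examples with $x^2=0$ show that the implication cannot hold without further structure. It is precisely the injective denominators supplied by the classical order hypothesis, together with the hypothesis $x\in SemiInj(J)$, that close this gap and allow one to upgrade the semi-injectivity of $x$ from $J$ to $Q$.
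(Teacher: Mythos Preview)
Your overall strategy---reduce both sides to a single picture via the surjection $q\mapsto U_xq$, and for that purpose first establish $x\in SemiInj(Q)$ so that Lemma~\ref{localsemiinject} applies to $Q$ as well as to $J$---is exactly the route the paper takes, though the paper compresses everything into one sentence and does not isolate the semi-injectivity of $x$ in $Q$ as a separate step. Your forward-direction argument (unitality of $U_xQ$ forces $U_xq=U_eU_xq=U_xU_bU_{x^2}q=0$ whenever $U_{x^2}q=0$) is correct and makes explicit something the paper leaves implicit.

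The converse direction, however, is a genuine gap in your proposal. You announce a plan---take an injective denominator $[s]\in J_{x^2}$ of the identity of $Q_{x^2}$ and ``exploit the identity-denominator equations'' to force $U_xq=0$ whenever $U_{x^2}q=0$---but you never carry it out, and it is not clear how those equations, which live entirely in $Q_{x^2}=Q^{(x^2)}/Ker_Qx^2$, control the finer kernel $\{q:U_xq=0\}$. Your own warning about the case $x^2=0$ is on point: take $J=\Phi E_{12}\subseteq Q=M_2(\Phi)^+$ and $x=E_{12}$. Then $x\in SemiInj(J)$ (since $U_xJ=0$), both $J_{x^2}$ and $Q_{x^2}$ are the zero algebra so the classical-order condition holds trivially, yet $U_xQ=\Phi E_{12}$ is nonzero and not unital, so $U_xJ=0$ is not a classical order in it. Thus the converse cannot be rescued by denominator arguments alone at this level of generality.

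The paper's one-line proof does not address this either; it effectively presumes that the kernel of $Q^{(x^2)}\to U_xQ$ coincides with $Ker_Qx^2$. What saves the paper is that every application of the lemma (in \ref{intersection inner ideals}, \ref{whylocalarelocal}, \ref{ann_lc}) occurs with $x\in LocInv(Q)\cap J$, so $x\in SemiInj(Q)$ is already available from context and the delicate converse never has to be argued in the stated generality. If you want a self-contained proof, the cleanest fix is to add the hypothesis $x\in SemiInj(Q)$ (or, equivalently for the intended applications, $x\in LocInv(Q)$); then both directions follow immediately from Lemma~\ref{localsemiinject} applied to $J$ and to $Q$, exactly as you outline in your first paragraph.
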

\begin{proof}Considering the homomorphism $Q^{(x^2)}\rightarrow U_xQ$ given by $q\mapsto U_xq$ and its restriction to $J$ instead of $Q$, the assertion follows directly from \ref{localsemiinject} and the fact that $Ker_Qx\cap J=Ker_Qx^2\cap J = Ker_Jx= Ker_Jx^2$.
\end{proof}

\begin{lema}\label{semiinjgotoquotients}Let $J$ be a nondegenerate Jordan algebra. If $Q\supseteq J$ is a general algebra of quotients of $J$ then $SemiInj(J)\subseteq  SemiInj(Q)$\end{lema}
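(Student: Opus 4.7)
My plan is to argue by contradiction: given $q\in Q$ with $U_x^2q=0$, set $r:=U_xq$ (so $U_xr=U_x^2q=0$) and assume $r\ne 0$. By (AQ1) and \cite[Lemma 1.8]{densos}, the inner ideal $K:={\cal D}_J(r)\cap({\cal D}_J(q):x)$ is dense in $J$, so by (AQ2) I can pick $d\in K$ with $m:=U_rd\ne 0$. The denominator conditions (Di), (Dii), together with the fundamental identity $U_xU_qU_x=U_{U_xq}=U_r$, yield simultaneously $m=U_rd\in J$ (since $d\in{\cal D}_J(r)$) and $n:=U_qU_xd\in J$ (since $U_xd\in{\cal D}_J(q)$); moreover $m=U_x(U_qU_xd)=U_xn$ lies in $U_xJ\cap J$.

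Semi-injectivity of $x$ in $J$ amounts to $Ker\,x\cap U_xJ=0$ (using $Ker\,x=Ker\,x^2$ for $J$ nondegenerate), so applied to the witness $n\in J$ for $m\in U_xJ$ it yields the equivalence $m=U_xn=0\iff U_xm=U_{x^2}n=0$. The proof therefore reduces to verifying $U_{x^2}n=U_{x^2}U_qU_xd=0$. For this I would invoke QJ16 specialized to $a=x^2$ and $b=q$: the hypothesis $U_{x^2}q=0$ annihilates the $U_{U_{x^2}q,q}$ term, leaving the operator identity $U_{x^2}U_q+U_qU_{x^2}+V_{x^2}U_qV_{x^2}=U_{x^2\circ q}$. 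Evaluating both sides at $U_xd$ and systematically using the vanishings $U_{x^m}qU_{x^n}=0$ for $m,n\ge 2$ (consequences of $U_{x^2}qU_{x^2}=U_{U_{x^2}q}=0$ via the fundamental formula and its linearizations) to collapse terms, the expression $U_{x^2}U_qU_xd$ reduces to zero.

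The main obstacle is this identity step: QJ16 alone is not a purely formal route to the desired vanishing (in a special associative realisation it enforces only side-relations such as $qxdx^3qx^2=0$), so the argument must combine QJ16 with (i) the density of $K$ that places $n=U_qU_xd$ back inside $J$, (ii) the collapse of higher $x^m q x^n$-products coming from $U_{x^2}q=0$, and (iii) semi-injectivity of $x$ in $J$ applied at intermediate stages to convert $U_{x^2}$-vanishings into $U_x$-vanishings. Once $U_{x^2}n=0$ is established, semi-injectivity of $x$ in $J$ forces $U_xn=m=0$, contradicting the choice of $d$; hence $r=U_xq=0$, proving $x\in SemiInj(Q)$.
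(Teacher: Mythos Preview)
Your overall strategy is sound and in fact coincides with the paper's: reduce to showing $U_{x^2}U_qU_xd=0$ for $d$ in a suitable dense inner ideal, then apply semi-injectivity of $x$ in $J$ (using $U_qU_xd\in J$) to descend to $U_xU_qU_xd=U_{U_xq}d=0$, and conclude $U_xq=0$ by density. The genuine gap is the step you flag yourself: you have not established $U_{x^2}U_qU_xd=0$, and the QJ16 route you sketch does not close it. In an associative model the hypothesis $x^2qx^2=0$ does not by itself force $x^2qxdxqx^2=0$; extra input is needed, and your items (i)--(iii) remain a wish list rather than an argument.

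The paper bypasses QJ16 entirely and gets $U_{x^2}U_qU_xK=0$ by a short two-stage descent using only the fundamental formula, semi-injectivity in $J$, and density. The trick is to enlarge the dense inner ideal to $K=({\cal D}_J(q):x)\cap({\cal D}_J(q):x^2)$, so that $U_{x^2}K\subseteq{\cal D}_J(q)$ as well. Stage one: $U_{x^2}U_qU_{x^2}K=U_{U_{x^2}q}K=0$, and since $U_qU_{x^2}K\subseteq J$, semi-injectivity of $x$ in $J$ gives $U_xU_qU_{x^2}K=0$. Stage two: for any $z\in K$ one has $U_{x^2}U_qU_xz\in J$ (because $U_xz\in{\cal D}_J(q)$ forces $U_qU_xz\in J$), and
\[
U_{U_{x^2}U_qU_xz}K \;=\; U_{x^2}U_qU_x\,U_z\,U_xU_qU_{x^2}K \;=\; 0
\]
by stage one; density of $K$ then yields $U_{x^2}U_qU_xz=0$. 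This is exactly the vanishing you were missing, and from here your final paragraph (or equivalently one more application of semi-injectivity plus density) finishes the proof. With this fix in place your argument and the paper's are essentially the same.
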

\begin{proof}Take $x\in SemiInj(J)$, and suppose that $U_{x^2}q =0$ for some $q\in Q$. Set $K = ({\cal D}_J(q):x)\cap ({\cal D}_J(q):x^2)$. Since $Q$ is an algebra of quotients of $J$, ${\cal D}_J(q)$ is a dense inner ideal, hence both $({\cal D}_J(q):x)$ and  $({\cal D}_J(q):x^2)$ are dense, and thus, so is their intersection $K$ (see \cite[Lemma 1.10]{densos}). Now we have $0 =U_{U_x^2q}K = U_{x^2}U_qU_{x^2}K$, but $U_qU_{x^2}K\subseteq U_qU_{x^2}({\cal D}_J(q):x^2)\subseteq  U_q{\cal D}_J(q) \subseteq J$, which implies that $U_xU_qU_{x^2}K= 0$ because $x \in SemiInj(J)$. Thus, for an arbitrary $z\in K$ we have $U_{U_{x^2}U_qU_xz}K = U_{x^2}U_qU_xU_zU_xU_qU_{x^2}K = 0$ by the previous equality, hence $U_{x^2}U_qU_xz=0$ by \cite[Lemma 2.4(iv)]{densos} for all $z \in K$, that is $U_{x^2}U_qU_xK=0$; but $U_{x^2}U_qU_xK \subseteq U_{x^2}U_qU_x({\cal D}_J(q):x)$, and since $U_qU_x({\cal D}_J(q):x) \subseteq  J$, the semi-injectivity of $x$ in $J$ implies $U_xU_qU_xK=0$. Then  $U_{U_xq}K=0$, which again by \cite[Lemma 2.4(iv)]{densos}, implies $U_xq =0$, and therefore $x \in SemiInj(Q)$. \end{proof}

Local  orders were  defined for non-necessarily
unital Jordan algebras, but when the involved  Jordan algebras are unital, local
orders are  algebras of quotients in the sense of
\cite{densos} and \cite{esenciales}, which coincide in this case. This will be obtained as a consequence of the following:

\begin{lema}\label{intersection inner ideals}
Let $J$ be a nondegenerate Jordan algebra. If $J$ is a subalgebra
of a Jordan algebra $Q$ and one of the following situations holds:
\begin{enumerate}\item[(i)] $J$ is a (weak) local order in $Q$,  \item[(ii)] $Q$ is an algebra
of quotients of  $J$,\end{enumerate} then $U_qJ\cap J\neq0$ for
any $0\neq q\in Q$, and as consequence, $Q$ is nondegenerate, and any nonzero inner ideal of $Q$ hits $J$ nontrivially.
\end{lema}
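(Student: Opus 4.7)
The plan is to reduce both hypotheses to the single statement $U_q\,{\cal D}_J(q)\ne 0$: since by \ref{denominadores} the denominators all lie in $J$ and property (Dii) forces $U_q\,{\cal D}_J(q)\subseteq J$, any nonzero element of $U_q\,{\cal D}_J(q)$ lies in $U_qJ\cap J$. Case (ii) is then immediate, as the nonvanishing of $U_q\,{\cal D}_J(q)$ for every $0\ne q\in Q$ is precisely axiom (AQ2) from \ref{aq}.

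For case (i), given $0\ne q\in Q$ I would use the defining property of a (weak) local order to fix $x\in J$ (namely $x\in LocInv(Q)\cap J$ in the weak case, $x\in SemiInj(J)$ otherwise) such that $q\in U_xQ$ and $U_xJ$ is an order in the unital Jordan algebra $U_xQ$: classical in the local order case, or an $S$-order for some monad $S$ of $U_xJ$ in the weak case. The key observation is that by \ref{classical_order} (and the entirely analogous fact for $S$-orders derivable from \cite[Examples 2.3.5]{densos}), such an order is itself a general algebra of quotients. Thus case (ii) applied inside $U_xQ$ to the subalgebra $U_xJ$ and to the nonzero element $q\in U_xQ$ produces a nonzero element of $U_q(U_xJ)\cap U_xJ$, and since $U_xJ\subseteq J$ this element lies in $U_qJ\cap J$.

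The rest is formal. Given $0\ne q\in Q$ and a witness $0\ne y\in U_qJ\cap J$, the containment $y\in U_qJ\subseteq U_qQ$ shows that $q$ is not an absolute zero divisor of $Q$, proving nondegeneracy of $Q$. If $K$ is a nonzero inner ideal of $Q$ and $q$ is a nonzero element of $K$, the inner ideal property $U_q\widehat{Q}\subseteq K$ gives $U_qJ\subseteq U_qQ\subseteq K$, whence $0\ne y\in U_qJ\cap J\subseteq K\cap J$. I expect the only substantive point to be, in the weak local order subcase, the verification that $S$-orders in unital Jordan algebras qualify as general algebras of quotients in the sense of \ref{aq}; this amounts to showing that ${\cal D}_{U_xJ}(q')$ is a dense inner ideal of $U_xJ$ for each $q'\in U_xQ$, which follows by combining the denominator condition (ClQ2) with the common-multiple property (ClQ3) of the monad $S$, exactly as in the classical-order argument cited above.
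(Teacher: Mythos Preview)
Your argument for case (ii) is correct and is essentially what the paper does (it simply cites \cite[Lemma 2.4(ii)]{densos}). The formal consequences (nondegeneracy of $Q$, nonzero intersection of inner ideals with $J$) are also fine.

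For case (i), your overall strategy---reduce to the pair $U_xJ\subseteq U_xQ$ and invoke the order property there---matches the paper's. However, there is a genuine gap. To apply case (ii) to $U_xJ\subseteq U_xQ$, the ambient hypothesis of the lemma requires the subalgebra $U_xJ$ to be nondegenerate; equivalently, for the claim ``an $S$-order is a general algebra of quotients'' to go through you must verify (AQ2), and that verification uses nondegeneracy. Concretely: given $0\ne q\in U_xQ$ and $s\in S\cap {\cal D}_{U_xJ}(q)$, one has $U_{U_sq}(U_xJ)=U_sU_qU_s(U_xJ)\subseteq U_s\big(U_q{\cal D}_{U_xJ}(q)\big)$, so $U_q{\cal D}_{U_xJ}(q)=0$ forces $U_sq$ to be an absolute zero divisor of $U_xJ$; without nondegeneracy you cannot conclude $U_sq=0$ and hence $q=0$. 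You never establish that $U_xJ$ is nondegenerate, and the reference \ref{classical_order}/\cite[Examples 2.3.5]{densos} does not supply it.

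The paper fills exactly this gap: since $LocInv(Q)\cap J\subseteq SemiInj(J)$ (so even in the weak case $x$ is semi-injective in $J$), Lemma \ref{localsemiinject} gives $U_xJ\cong J_{x^2}$, and local algebras of a nondegenerate Jordan algebra are nondegenerate \cite[Proposition 0.2]{acm}. With this in hand, the paper invokes \cite[Proposition 2.9(iii)]{fgm} directly to obtain $U_q(U_xJ)\cap U_xJ\ne0$; your route via case (ii) would also work once nondegeneracy of $U_xJ$ is established.
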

\begin{proof} (i) Since $J\subseteq Q$ is a (weak)
local order (that is, $J\subseteq Q$ satisfies (LO2)), given $q\in Q$, there exists an element $x\in LocInv(Q)\cap J \subseteq SemiInj(J)$ such that $q\in U_xQ$, and $U_xJ$
is a classical order in $U_eQ=U_xQ$ ($e=P(x)$). Now, since $J$ is nondegenerate, \ref{quotlocalsemiinj} together with \cite[Proposition 0.2]{acm} implies that $U_xJ$ is nondegenerate, and thus \cite[Proposition 2.9 (iii)]{fgm} implies that $U_qU_xJ\cap U_xJ 	\neq 0$, which obviously implies $U_qJ\cap J \neq 0$.

 The assertion in case (ii) is just \cite[Lemma 2.4(ii)]{densos}, and the last assertions are obvious.\end{proof}

\begin{proposicion}\label{localorderqotient} Let $J$ be a nondegenerate Jordan algebra and a local order in a Jordan algebra $Q$. Then:
\begin{enumerate}\item[(i)] if $J$ is unital,  $Q$ is also unital with the same unit as $J$.
\item[(ii)]
if  $Q$ is unital,  $J$ is
a classical order in $Q$ and therefore $Q$ is an algebra of quotients of
$J$.\end{enumerate}\end{proposicion}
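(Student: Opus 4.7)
My plan is to treat each part separately, combining the defining conditions (LO1) and (LO2) of local order with the Peirce machinery and Lemma~\ref{intersection inner ideals}.

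For (i), assume $1\in J$ is the unit of $J$. Then $1$ is an idempotent of $Q$, so the inner ideal $U_1Q$ is a unital subalgebra of $Q$ with unit $1$. I propose to prove $Q=U_1Q$: given $q\in Q$, condition (LO2) produces $x\in SemiInj(J)$ with $q\in U_xQ$. Since $1$ is the unit of $J$ and $x\in J$, we have $x=U_1x$, and then QJ3 gives $U_x=U_{U_1x}=U_1U_xU_1$, so $U_xQ\subseteq U_1Q$, and in particular $q\in U_1Q$. Hence $1$ acts as the unit of $Q$.

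For (ii), assume $Q$ is unital with unit $1_Q$. I will verify the two defining conditions of \ref{classical_order}: every $q\in Q$ has a $J$-denominator in $Inj(J)$, and every element of $Inj(J)$ is invertible in $Q$. The first will follow from the following observation: applying (LO2) to $q=1_Q$ yields $x_0\in SemiInj(J)$ with $1_Q=U_{x_0}q_0$ for some $q_0\in Q$ and with $U_{x_0}J$ a classical order in $U_{x_0}Q$. But QJ3 then gives $U_{x_0}U_{q_0}U_{x_0}=U_{1_Q}=id_Q$, so $x_0$ is invertible in $Q$ and $U_{x_0}Q=Q$. Therefore $U_{x_0}J\subseteq J$ is in fact a classical order in all of $Q$, producing for each $q\in Q$ an element $s\in Inj(U_{x_0}J)$ which is a $U_{x_0}J$-denominator of $q$ (hence a $J$-denominator since $U_{x_0}J\subseteq J$) and invertible in $Q$; its invertibility in $Q$ forces $s\in Inj(J)$.

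The main obstacle is the second condition: upgrading the local invertibility of an injective element $s\in J$ in $Q$, provided by (LO1), to genuine invertibility in $Q$. Given $s\in Inj(J)\subseteq SemiInj(J)$, (LO1) supplies an idempotent $e=P(s)\in Q$ with $s$ invertible in $U_eQ$. Set $f=1_Q-e$, the orthogonal idempotent. From $s\in U_eQ$ we get $s=U_es$, hence QJ3 yields $U_s=U_eU_sU_e$, and Peirce orthogonality then forces $U_s$ to annihilate $U_fQ$. If $U_fQ\neq 0$, pick $0\neq q\in U_fQ$; the inclusion $U_qJ\subseteq U_qQ\subseteq U_fQ$ (once more by QJ3 applied to $q=U_fq'$), combined with Lemma~\ref{intersection inner ideals} applied to the local order $J\subseteq Q$, produces a nonzero $a\in U_qJ\cap J$ with $U_sa=0$, contradicting injectivity of $s$ in $J$. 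Thus $U_fQ=0$, and nondegeneracy of $Q$ (also from Lemma~\ref{intersection inner ideals}) together with $f=U_ff\in U_fQ$ forces $f=0$; hence $e=1_Q$ and $s$ is invertible in $Q$. The final assertion, that $Q$ is a general algebra of quotients of $J$, then follows from \ref{classical_order}.
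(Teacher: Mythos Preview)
Your argument is essentially correct and is considerably more explicit than the paper's own proof, which for (i) just invokes the argument of \cite[Lemma 3.2(a)]{esenciales} (using \ref{intersection inner ideals}(i) in place of the corresponding lemma there), and for (ii) simply refers to the proof of \cite[Proposition 19]{fg1}. Your treatment of (i) via $U_x=U_{U_1x}=U_1U_xU_1$ and of the invertibility of injective elements in (ii) via the Peirce complement $f=1_Q-e$ and \ref{intersection inner ideals} is clean and matches the spirit of those references.

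There is one small gap. In (ii), after obtaining $x_0$ invertible in $Q$ and $U_{x_0}J$ a classical order in $Q$, you assert that an $U_{x_0}J$-denominator $s$ of $q$ is automatically a $J$-denominator ``since $U_{x_0}J\subseteq J$''. That implication is not immediate: conditions (Diii), (Diii'), (Div), (Div') involve $\widehat{J}$ on the outside, and enlarging $J'$ to $J$ enlarges the test set, so knowing $U_qU_s\widehat{U_{x_0}J}\subseteq U_{x_0}J$ does not directly give $U_qU_s\widehat{J}\subseteq J$. Two easy fixes are available. The first, and the one in keeping with the paper's methods, is to invoke \cite[Corollary 2.3]{fgm}: from $U_{x_0}J\subseteq J\subseteq Q$ with $U_{x_0}J$ a classical order in $Q$ one concludes directly that $J$ is a classical order in $Q$. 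The second is to use the context-free criterion in \ref{denominadores}: from $U_sq\in U_{x_0}J\subseteq J$ and $s\circ q=\{s,q,1\}\in V_{s,q}\widehat{U_{x_0}J}\subseteq U_{x_0}J\subseteq J$ you get $s^4\in\mathcal{D}_J(q)$, and $s^4$ is still invertible in $Q$, hence lies in $Inj(J)$. Either way the conclusion stands.
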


\begin{proof} The proof of \cite[Lemma 3.2.(a)]{esenciales} applies here using \ref{intersection inner ideals}(i), the same property as \cite[Lemma 2.4 (iii)]{esenciales}, which is used in \cite[Lemma 3.2.(a)]{esenciales}). For (ii), the proof of
\cite[Proposition 19]{fg1} also works here with the obvious changes for the references. The last statement follows from
\cite[Examples 2.3.5]{densos}.
\end{proof}

We next aim at giving an alternative characterization of local orders in nondegenerate algebras with dcc on inner ideals which suggests a reason of the use of the adjective "local". We first prove a result which, among other uses, will be instrumental to that end.

\begin{lema}\label{localofclassicalordersatsocle} Let the Jordan algebra $J$ be a classical order in a nondegenerate Jordan algebra $\tilde J$. If $a\in Soc(\tilde J)\cap J$, then under the natural identification $J_a\subseteq \tilde J_a$ induced by the inclusion $J\subseteq \tilde J$, $\tilde J_a$ is a classical algebra of quotients of $J_a$.
\end{lema}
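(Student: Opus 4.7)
The plan is to obtain this lemma as a direct assembly of three pieces of machinery already developed in the paper: the fact that every classical algebra of quotients is a general algebra of quotients (\ref{classical_order}), the stability of the ``general algebra of quotients'' relation under passage to local algebras (\ref{local-q}), and the equivalence between being a general algebra of quotients and being a classical one once the overalgebra is unital of finite capacity (\ref{eq}).

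First I would observe that since $J$ is a classical order in $\tilde{J}$, by \ref{classical_order} $\tilde{J}$ is a general algebra of quotients of $J$; by \ref{aq} this also ensures $J$ is nondegenerate. Together with the nondegeneracy of $\tilde{J}$, the description of the kernel recalled in \ref{local algebras} gives $\mathrm{Ker}_J\, a = \{x\in J\mid U_ax=0\} = \mathrm{Ker}_{\tilde{J}}\, a \cap J$, so the inclusion $J\hookrightarrow\tilde{J}$ induces the natural injective homomorphism $J_a\hookrightarrow\tilde{J}_a$ that appears in the statement.

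Next I would invoke \ref{local-q} to deduce from step one that $\tilde{J}_a$ is a general algebra of quotients of $J_a$. Then I would exploit the hypothesis $a\in Soc(\tilde{J})$: by the characterization of socle elements via local algebras recalled in \ref{socle-def}, $\tilde{J}_a$ has finite capacity, and being a nondegenerate Jordan algebra of finite capacity it is in particular unital. At this point the hypotheses of \ref{eq} are met for the pair $J_a\subseteq\tilde{J}_a$, and that lemma converts the ``general algebra of quotients'' relation into the desired classical one, yielding that $J_a$ is a classical order in $\tilde{J}_a$, equivalently that $\tilde{J}_a$ is a classical algebra of quotients of $J_a$.

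I do not foresee a real obstacle: the argument is a bookkeeping combination of earlier results, and the only point that requires any checking is the compatibility between the two local algebra constructions, which is automatic from the nondegeneracy of both $J$ and $\tilde{J}$ via the simplified description of $\mathrm{Ker}\, a$ in the nondegenerate case.
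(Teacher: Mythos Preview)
Your proposal is correct and follows essentially the same route as the paper's proof: pass from classical to general algebra of quotients, localize via \ref{local-q}, use $a\in Soc(\tilde J)$ to obtain finite capacity of $\tilde J_a$, and then apply \ref{eq} to return to a classical order. The paper is terser (it cites \ref{localorderqotient} rather than \ref{classical_order} for the first step and does not spell out the identification $J_a\subseteq\tilde J_a$), but the logical structure is identical.
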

\begin{proof}According to \ref{localorderqotient} $\tilde J$ is an algebra of quotients of $J$, hence $J_a$ is an algebra of quotients of $\tilde J_a$. Since $a\in Soc(\tilde J)$, $\tilde J_a$ has finite capacity by \cite[Lemma 0.7(b)]{pi-i}, therefore $J_a$ is a classical order
in $\tilde J_a$ by \ref{eq}.
\end{proof}

\begin{apartado}\label{almostpeirce}We introduce a piece of notation that will be used in the sequel. Let $J$ be a subalgebra of a Jordan algebra $\tilde J$, and let $e\in \tilde J$ be an idempotent. We denote $J_2(e) = \{a\in J\mid U_ea=a\} = \tilde J_2(e)\cap J$ which is clearly an  inner ideal of $J$ because $\tilde J_2(e)$ is an inner ideal of $\tilde J$.\end{apartado}

The following extended version of Litoff's Theorem for quadratic Jordan algebras, whose linear version was proved in \cite{anh}, will be a key tool in our study of quadratic Jordan algebras which are local orders in nondegenerate Jordan algebras satisfying dcc on principal inner ideals.

\begin{teorema}\label{litoff_QJA}
Let $J$ be a nondegenerate Jordan algebra with $J=Soc(J)$. Then for every finite collection of elements $a_1,\ldots,a_n$ in $J$ there exists an idempotent $e\in J$ such that $a_1,\ldots,a_n\in U_eJ=J_2(e)$.
\end{teorema}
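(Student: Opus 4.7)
The plan is to reduce the statement to the structure of the socle given in \ref{socle-def} and \ref{locally artinian} and to exploit the fact that each simple component of $J = Soc(J)$ is unital, so that the desired idempotent can be obtained as a finite sum of unit elements of simple summands. I do not expect serious obstacles; the main work is assembling the right pieces.

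First I would invoke the structural description of the socle of a nondegenerate algebra established in \ref{socle-def}: since $J = Soc(J)$, we can write $J = \bigoplus_{\alpha \in \Lambda} J_\alpha$, where each $J_\alpha$ is a simple ideal that coincides with its own socle, hence satisfies the dcc on principal inner ideals and therefore has finite capacity. In particular, $J_\alpha\circ J_\beta \subseteq J_\alpha\cap J_\beta = 0$ and $U_{J_\alpha}J_\beta \subseteq J_\alpha\cap J_\beta=0$ for $\alpha\ne\beta$, so the simple summands are mutually orthogonal.

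Next, by \ref{locally artinian}, each simple $J_\alpha$ is either artinian or a nonartinian quadratic factor; in either case $J_\alpha$ is unital, and I would denote its unit by $e_\alpha$. Since the elements of the direct sum $\bigoplus_{\alpha\in\Lambda}J_\alpha$ have finite support, there is a finite subset $\Lambda_0 = \{\alpha_1,\ldots,\alpha_k\}\subseteq \Lambda$ such that $a_1,\ldots,a_n\in J_{\alpha_1}\oplus\cdots\oplus J_{\alpha_k}$. Decomposing each $a_i = \sum_{j=1}^k a_{i,j}$ with $a_{i,j}\in J_{\alpha_j}$, I note that $U_{e_{\alpha_j}}a_{i,j} = a_{i,j}$ because $e_{\alpha_j}$ is the unit of $J_{\alpha_j}$.

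Finally I would set $e = e_{\alpha_1}+\cdots+e_{\alpha_k}$. By orthogonality of the summands, the $e_{\alpha_j}$ are pairwise orthogonal idempotents, so $e^2 = \sum_j e_{\alpha_j}^2 = e$ and $e$ is an idempotent of $J$. Using orthogonality once more, for each $i$ one obtains
\[
U_e a_i = \sum_{j=1}^k U_{e_{\alpha_j}} a_{i,j} = \sum_{j=1}^k a_{i,j} = a_i,
\]
so $a_i \in U_eJ = J_2(e)$ (the equality $U_eJ = J_2(e)$ being the standard Peirce identity for idempotents, as used in \ref{almostpeirce} and \ref{locally artinian}). This furnishes the required idempotent.
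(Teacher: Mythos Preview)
Your reduction to simple summands via the direct-sum decomposition of the socle is fine, and indeed the paper's proof begins the same way. The gap is in the next step: the inference ``satisfies the dcc on principal inner ideals and therefore has finite capacity'' is not valid, and consequently the claim that each simple $J_\alpha$ is unital fails. A simple Jordan algebra coinciding with its socle certainly has dcc on principal inner ideals, but it need not have finite capacity and need not be unital. Concretely, if $A$ is the associative algebra of infinite matrices with only finitely many nonzero entries over a field, then $A^+$ (or $H_0(A,\ast)$ for a suitable involution) is simple, equals its own socle, and has no identity element; these are precisely the algebras for which the Litoff theorem has content. The dichotomy in \ref{locally artinian} (artinian versus nonartinian quadratic factor) is stated there for simple algebras \emph{of finite capacity}, so invoking it presupposes what you would need to prove.

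The paper's argument, after the same reduction to the simple case, separates the finite-capacity case (where $J$ is indeed unital and there is nothing to prove) from the remaining case, where the Simple Structure Theorem of \cite{mcz} gives $J=A^+$ or $J=H_0(A,\ast)$ for a simple associative $A$ with $A=Soc(A)$; then the associative Litoff theorem of \cite{anh} produces the required idempotent. So the essential work---finding an idempotent covering a finite set inside a simple non-unital algebra equal to its socle---is precisely what your proposal skips.
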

\begin{proof} Since $J$ equals its socle,  \cite[Theorem 2(b)]{loos-socle}, $J$ is a direct sum of simple ideals (see \ref{socle-def}), and we can therefore assume that $J$ is simple. Now the result follows directly if $J$ has finite capacity (since then $J$ is unital), so by the Simple Structure Theorem \cite[Theorem 15.5]{mcz},
 we can assume that either $J=A^+$ of $J=H_0(A, \ast)$ for a simple associative algebra $A$. Moreover by \cite[Proposition 4.1]{ft} we have $A=Soc(A)$. Now the proof of \cite{anh}
 carries over unchanged to the quadratic setting since the only elements of $H(A,\ast)$ whose presence in the subalgebra of symmetric elements is assumed in \cite{anh} are either norms or traces, and therefore they belong to any ample $H_0(A,\ast)$. \end{proof}

\begin{lema}\label{InTEx_idempotents}  Let $J$ be a nondegenerate Jordan algebra, and let $\widetilde{J}\supseteq J$ be an innerly tight extension of $J$ with $J=Soc(J)$. Then for any idempotent $e\in Soc(\widetilde{J})$ there exists $x\in SemiInj(J)$ such that $e=P(x)$ (i.e. $x$ is invertible in $U_e\widetilde{J}$).
\end{lema}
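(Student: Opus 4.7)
The plan is to construct $x$ via the Peirce decomposition of $\widetilde{J}$ at $e$. Since $e\in Soc(\widetilde{J})$ is an idempotent, by \ref{socle-def} the local algebra $\widetilde{J}_e$, which is canonically isomorphic to $\widetilde{J}_2(e) = U_e\widetilde{J}$, is a nondegenerate unital Jordan algebra of finite capacity (nondegeneracy of $\widetilde{J}$ itself follows from the first clause of innerly tightness applied to $\widetilde{J}\supseteq J$). My strategy is to break $e$ into primitive orthogonal pieces, use innerly tightness to fish out a nonzero element of $J$ lying in each diagonal Peirce-$2$ summand, and sum them to obtain $x$.

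Concretely, the capacity structure of the unital algebra $\widetilde{J}_2(e)$ furnishes a decomposition $e = e_1+\cdots+e_n$ into orthogonal idempotents primitive in $\widetilde{J}_2(e)$, with each $\widetilde{J}_2(e_i) = U_{e_i}\widetilde{J}$ a Jordan division algebra (these $e_i$ remain primitive in $\widetilde{J}$ because any orthogonal splitting in $\widetilde{J}$ lives in $\widetilde{J}_2(e_i)\subseteq \widetilde{J}_2(e)$). Since each $e_i\ne 0$, the first clause of innerly tightness gives $U_{e_i}J\cap J \ne 0$, so I can pick $0 \ne j_i \in U_{e_i}J\cap J \subseteq J\cap \widetilde{J}_2(e_i)$; as a nonzero element of a Jordan division algebra, $j_i$ is invertible in $\widetilde{J}_2(e_i)$, and I denote its inverse there by $j_i^{-1}$. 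I then set
\[
x := j_1 + \cdots + j_n \in J.
\]

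The verification that $x$ does the job is routine Peirce arithmetic. Because $\widetilde{J}_2(e_i)$ and $\widetilde{J}_2(e_k)$ annihilate each other in the usual Peirce sense for $i\ne k$, all cross terms in $U_xx^\sharp$, $(x^\sharp)^2\circ x$, and $U_x(x^\sharp)^2$ vanish when I take the candidate $x^\sharp := \sum_i j_i^{-1}$, yielding $U_xx^\sharp = \sum_i U_{j_i}j_i^{-1} = \sum_i j_i = x$, $(x^\sharp)^2\circ x = x^\sharp$ and $P(x) = U_x(x^\sharp)^2 = \sum_i U_{j_i}(j_i^{-1})^2 = \sum_i e_i = e$, confirming via characterization (iii) of \ref{localyinvert} that $x$ is locally invertible in $\widetilde{J}$ with $P(x) = e$. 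For semi-injectivity of $x$ in $J$, note that $x\in \widetilde{J}_2(e)$ forces $U_xy\in U_x\widetilde{J}\subseteq \widetilde{J}_2(e)$ for any $y\in J$; so if $U_x^2y=0$ then $U_xy$ lies in the kernel of the map $U_x$ on $\widetilde{J}_2(e)$, which is injective since $x$ is invertible there, giving $U_xy=0$ as needed.

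The only non-mechanical step I anticipate is the appeal to the decomposition of $e$ into orthogonal primitive idempotents with division Peirce-$2$ spaces; this is the standard capacity theorem for nondegenerate unital Jordan algebras of finite capacity, but one should cite it precisely (for instance via the description of inner ideals in such algebras alluded to in \ref{locally artinian}). Once that structural input is in hand, everything else reduces to the Peirce arithmetic sketched above and the one-line semi-injectivity argument.
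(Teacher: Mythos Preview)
Your argument is correct and is essentially the paper's own proof: the paper likewise decomposes $e$ via a frame of orthogonal division idempotents (phrased there in Jordan-pair language as a strong frame of $(\widetilde{J}_2(e),\widetilde{J}_2(e))$, with invertibility of $x=\sum k_i$ in $U_e\widetilde{J}$ deduced from Loos's rank theory \cite{loos-diagonal} rather than from your direct Peirce computation), uses inner tightness to pick nonzero $k_i\in U_{e_i^+}\widetilde{J}\cap J$, and concludes $x\in J\cap LocInv(\widetilde{J})\subseteq SemiInj(J)$. One minor caveat: characterization (iii) in \ref{localyinvert} is transcribed from the linear setting and is off by a factor of $2$ in the quadratic $\circ$-product, so it is safer to verify (ii) instead --- your same Peirce bookkeeping gives $U_xU_{x^\sharp}x^\sharp=U_x(x^\sharp)^3=\sum_i U_{j_i}(j_i^{-1})^3=\sum_i j_i^{-1}=x^\sharp$.
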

\begin{proof}We consider the double pair $V(\tilde J) = (\tilde J,\tilde J)$. Since $\tilde J$ equals its socle, so does $V(\tilde J)$. According to \cite{loos-finiteness} we can take a strong frame $\big\{e_1=(e_1^+,e_1^-),\ldots,e_n=(e_n^+,e_n^-)\big\}$ of the nondegenerate Jordan pair $\big(\widetilde{J}_2(e),\widetilde{J}_2(e)\big)$, where $n$ is the capacity of the Jordan pair \cite[Lemma 0.7(b)]{pi-i}.

The inner tightness of $\widetilde{J}$ over $J$ implies  (see \ref{InTEx}) that there exist nonzero elements $0\neq k_i\in U_{e_i^+}\tilde J\cap J$ with $k_i=U_{e_i^+}U_{e_i^-}k_i$. Let $x=k_1+\cdots +k_n\in J$. Then $U_ex=x$, so that $x\in U_e\widetilde{J}$ and we have  $rk(x)=rk(e)=n$, so that
$x$ is invertible in $U_e\widetilde{J}$ \cite[Proposition 1, Corollary 1]{loos-diagonal}. Hence   $x\in J\cap LocInv(\widetilde{J})\subseteq SemiInj(J)$.
\end{proof}

The following result contains what will be our operating version of local orders since we will be mainly interested in local orders of algebras with dcc on principal inner ideals, that is on algebras that equal their socles. As mentioned in the introduction, and in analogy with \cite[Theorem 1]{anhm1},  later on we will prove that these algebras are in fact general algebras of quotients.

\begin{teorema}\label{whylocalarelocal}Let $J$ be nondegenerate Jordan algebra, and let $Q\supseteq J$ be an over-algebra such that $Q = Soc(Q)$. Then $J$ is a local order in $Q$ if and only if:
\begin{enumerate}\item[(LOS1)]$SemiInj(J) = LocInv(Q)\cap J$,
\item[(LOS2)]For any $q\in Q$ there exists $x\in SemiInj(J)$ such that $q\in U_xQ$, and
\item[(LOS3)]For any $x\in J$, the local algebra $J_x$ is a classical order in $Q_x$ (with the obvious containment).
\end{enumerate}\end{teorema}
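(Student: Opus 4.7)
Plan. The theorem has two implications of very different character: the converse is a short verification via the isomorphisms from Lemma \ref{localsemiinject} and Lemma \ref{quotlocalsemiinj}, while the forward direction reduces (LOS1) and (LOS2) to routine bookkeeping and requires genuine work for (LOS3).

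For the converse, assume (LOS1)--(LOS3). The containment $SemiInj(J)\subseteq LocInv(Q)$ inside (LOS1) immediately establishes axiom (LO1). For (LO2), given $q\in Q$, (LOS2) produces $x\in SemiInj(J)$ with $q\in U_xQ$. Since (LOS1) places $x$ in $LocInv(Q)\subseteq SemiInj(Q)$ via the general containment $LocInv(Q)\cap Q\subseteq SemiInj(Q)$ recorded in the preliminaries, Lemma \ref{localsemiinject} identifies $U_xJ$ with $J_{x^2}$ and $U_xQ$ with $Q_{x^2}$. Applying (LOS3) to $x^2\in J$ and invoking Lemma \ref{quotlocalsemiinj} then yields that $U_xJ$ is a classical order in $U_xQ$, completing (LO2).

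For the forward direction, assume $J$ is a local order in $Q$. Property (LOS1) is a combination of (LO1) with the general inclusion $LocInv(Q)\cap J\subseteq SemiInj(J)$ noted in the preliminaries, and (LOS2) is the existential half of (LO2). The substance of the theorem is (LOS3). Fix $x\in J$; since $x\in Q=Soc(Q)$, the local algebra $Q_x$ is unital of finite capacity by \cite[Lemma 0.7(b)]{pi-i}, so by Lemma \ref{eq} it is enough to exhibit $J_x$ as a classical order in $Q_x$. Applying (LO2) to $x$ furnishes $y\in SemiInj(J)$ with $x\in U_yQ$ and $U_yJ$ a classical order in $U_yQ$; translating through Lemma \ref{localsemiinject} and Lemma \ref{quotlocalsemiinj} this gives that $J_{y^2}$ is a classical order in $Q_{y^2}$, and since $Q_{y^2}$ has finite capacity it equals its own socle, so Lemma \ref{localofclassicalordersatsocle} propagates this classical order property to the pair $(J_{y^2})_{\bar a}\subseteq (Q_{y^2})_{\bar a}$ for every $\bar a\in J_{y^2}$.

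The principal obstacle is that $x$ need not lie in $U_yJ$, so the image of $x$ in $Q_{y^2}$ need not belong to $J_{y^2}$, which blocks a direct application of Lemma \ref{localofclassicalordersatsocle} at that image. I would circumvent this by using the classical order structure of $U_yJ$ in $U_yQ$ once more: there exists $s\in U_yJ\cap Inj(U_yJ)$ serving as a denominator of $x\in U_yQ$, so that $t:=U_sx$ lies in $U_yJ$ and projects to an element $\bar t\in J_{y^2}$, while $\bar s\in J_{y^2}$ is invertible in $Q_{y^2}$ (as $s$ is injective in $U_yJ$). Lemma \ref{localofclassicalordersatsocle} applied at $\bar t$ then delivers that $(J_{y^2})_{\bar t}$ is a classical order in $(Q_{y^2})_{\bar t}$, and the relation $\bar t=U_{\bar s}\bar x$ together with the invertibility of $\bar s$ in $Q_{y^2}$ induces an isomorphism of local algebras transporting this classical order structure to the pair $(J_x,Q_x)$. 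The technically most delicate point is the careful tracking of iterated local algebras: the chain of identifications $Q_x\cong (U_yQ)_x\cong (Q_{y^2})_{\bar q}$ (where $x=U_yq$) combined with the iterated-local-algebra identity $(Q_{y^2})_{\bar w}\cong Q_{U_{y^2}w}$ and its $J$-analogue must be shown to carry the $J$-subalgebra correctly across the structural transformation induced by $\bar s$, yielding $J_x$ classical order in $Q_x$.
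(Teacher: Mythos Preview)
Your converse direction and your treatment of (LOS1), (LOS2) coincide with the paper's. For (LOS3) your argument is workable but takes an unnecessarily roundabout path compared to the paper.

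The obstacle you isolate---that $x$ need not lie in $U_yJ$---is genuine, but the paper dissolves it without introducing a denominator $s$ or a structural transformation. Since $y\in LocInv(Q)$, there is an idempotent $e=P(y)$ with $U_yQ=Q_2(e)$, and the paper interposes the subalgebra $J_2(e)=J\cap Q_2(e)$ (notation of \ref{almostpeirce}). The key observation is that $x\in J$ and $x\in U_yQ=Q_2(e)$ force $x\in J_2(e)$, so one can localize there directly. From the sandwich $U_yJ\subseteq J_2(e)\subseteq Q_2(e)=U_yQ$ and \cite[Corollary 2.3]{fgm}, $J_2(e)$ is itself a classical order in $Q_2(e)$; Lemma \ref{localofclassicalordersatsocle} then applies \emph{at $x$} to give $J_2(e)_x$ a classical order in $Q_2(e)_x$. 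Using $Q_2(e)\cong Q_e$ and the iterated-local-algebra identity yields $Q_2(e)_x=Q_{U_ex}=Q_x$, and one more sandwich $J_2(e)_x\subseteq J_x\subseteq Q_x$ together with \cite[Corollary 2.3]{fgm} finishes.

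Your denominator-plus-structural-transformation route can be pushed through, but the ``careful tracking'' you flag hides real work. The isomorphism $(Q_{y^2})_{\bar t}\to (Q_{y^2})_{\bar x}$ is given on homotopes by $z\mapsto U_{\bar s}z$; since $\bar s\in J_{y^2}$ but $\bar s^{-1}$ need not be, this carries $(J_{y^2})_{\bar t}$ \emph{into} but not \emph{onto} the image of $J_{y^2}$, so a sandwich argument is still required afterwards. Moreover, your chain $Q_x\cong (U_yQ)_x\cong (Q_{y^2})_{\bar q}$ combined with the iterated formula $(Q_{y^2})_{\bar q}\cong Q_{U_{y^2}q}=Q_{U_yx}$ lands at $Q_{U_yx}$ rather than $Q_x$; the step $Q_x\cong (U_yQ)_x$ already implicitly uses the idempotent $e$ (one needs $a\equiv U_ea \pmod{Ker_Q x}$ to get surjectivity), so you are in effect invoking the paper's device without reaping its simplification. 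Passing to $J_2(e)$ at the outset makes all of this unnecessary.
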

\begin{proof}Suppose first that $J$ is a local order in $Q$. As LOS1 and LOS2 are part of the definition of local order, it is clear that it suffices to prove that  LOS3 holds. Take  $x\in J$. By the definition of local order, there exists an element $s\in SemiInj(J) = LocInv(Q)\cap J$ such that $x \in U_sJ$ and $U_sJ$ is a classical order in $U_sQ$ (with respect to a monad of $U_sJ$ which is easily seen to coincide with $Inj(U_sJ)$ since $Q=Soc(Q)$, and therefore $U_sJ$ is a classical order in $U_sQ$). Since $s$ is locally invertible in $Q$,  by  \ref{localyinvert} we can consider the idempotent $e=P(s)\in Q$. With the notation introduced above, we have
$U_sJ \subseteq J_2(e) \subseteq  Q_2(e) = U_sQ$, and since $U_sJ$ is a classical order in $U_sQ = Q_2(e)$, $J_2(e)$ is also a classical order in $Q_2(e)$ by \cite[Corollary 2.3]{fgm}.

Clearly $x\in U_sQ = U_eQ= Q_2(e)$ implies $ x\in J_2(e)$, and since $J_2(e) \subseteq J$, the containment $J_2(e)_x\subseteq J_x$ is clear. From the obvious containment $Q_2(e)_x \subseteq Q_x$,  making use of the identification $Q_2(e) = Q_e$  (see \cite[Example 1.12]{Subquot}) yields  $Q_2(e)_x = (Q_e)_{x+Ker e}= Q_{U_ex}= Q_x$ (see \cite[Lemma 0.5]{pi-i}).

Since as we have mentioned before $J_2(e)$ is a classical order in $Q_2(e)$, by \ref{localofclassicalordersatsocle} $J_2(e)_x$ is a classical order in $Q_2(e)_x$, and since $J_2(e)_x \subseteq J_x\subseteq Q_x= Q_2(e)_x$ (with the identifications mentioned before), from \cite[Corollary 2.3]{fgm} we obtain that $J_x$ is a classical order in $Q_x$, as asserted in LOS3.

We next address the reciprocal, so we assume that $J\subseteq Q$ is a nondegenerate subalgebra of a Jordan algebra $Q = Soc(Q)$, and that properties LOS1, LOS2 and LOS3 hold. It is clear that to prove that $J$ is a local order on $Q$ it suffices to prove that for any $q\in Q$ there exists $x\in SemiInj(J)$ such that $q \in U_xQ$ and $U_xJ$ is a classical order in $U_xQ$. As shown in \ref{quotlocalsemiinj}, this is equivalent to proving that $J_{x^2}$ is a classical order in $Q_{x^2}$, which obviously follows from LOS3.\end{proof}

We have shown in \ref{intersection inner ideals} that if $J$ is an order in a Jordan algebra with dcc on principal inner ideals (that is, such that $Soc(Q)=Q$), and $J$ is nondegenerate, then $Q$ is also nondegenerate. We next prove the reciprocal of that assertion:

\begin{lema}\label{lift degeneracy}Let $J$ be a local order in a Jordan algebra $Q$ with
$Soc(Q) =Q$. If $Q$ is nondegenerate then $J$ is nondegenerate.\end{lema}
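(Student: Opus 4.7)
The plan is to argue by contradiction. Suppose $J$ is degenerate, so there exists a nonzero $a\in J$ with $U_aJ=0$; I will derive a contradiction by producing a nondegenerate Peirce-type subalgebra of $J$ that contains $a$.

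First I would apply condition (LO2) of the definition of local order to the element $a\in Q$: this provides some $x\in SemiInj(J)$ with $a\in U_xQ$ and such that $U_xJ$ is a classical order in $U_xQ$. Condition (LO1) makes $x$ locally invertible in $Q$, so setting $e=P(x)$ we have $U_xQ=U_eQ=Q_2(e)$, and consequently $a\in J\cap Q_2(e)=J_2(e)$ in the notation of \ref{almostpeirce}.

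Next I would promote the classical order inclusion $U_xJ\subseteq Q_2(e)$ to the statement that $J_2(e)$ is itself a classical order in $Q_2(e)$. Since $U_xJ\subseteq J_2(e)\subseteq Q_2(e)$ and the outer pair is already a classical order, any intermediate subalgebra inherits the property by the same step used in the proof of \ref{whylocalarelocal}, via \cite[Corollary 2.3]{fgm}. Crucially, this argument uses only what is built into the definition of local order, so it does not rely on the as-yet-unknown nondegeneracy of $J$.

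Because $Q=Soc(Q)$, the idempotent $e$ lies in the socle, and therefore $Q_2(e)=U_eQ$ is a unital Jordan algebra of finite capacity (cf.\ \ref{socle-def} and \ref{locally artinian}). Applying \ref{eq} to the classical order pair $J_2(e)\subseteq Q_2(e)$ then gives that $J_2(e)$ is nondegenerate. On the other hand, $J_2(e)\subseteq J$ forces $U_aJ_2(e)\subseteq U_aJ=0$, which together with $0\neq a\in J_2(e)$ contradicts the nondegeneracy of $J_2(e)$. The only delicate point is checking that the classical order property transfers from $U_xJ$ up to $J_2(e)$ without invoking the very nondegeneracy of $J$ we are trying to prove; fortunately the intermediate-subalgebra lemma from \cite{fgm} does not require it.
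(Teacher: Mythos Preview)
Your argument is correct and follows essentially the same route as the paper: pick an absolute zero divisor, use (LO2) and (LO1) to place it inside some $J_2(e)$ with $U_xJ$ a classical order in $U_xQ=Q_2(e)$, lift this to $J_2(e)$ via \cite[Corollary 2.3]{fgm}, and then use finite capacity of $Q_2(e)$ to force $J_2(e)$ nondegenerate, yielding the contradiction. The only cosmetic difference is that the paper extracts the nondegeneracy of $J_2(e)$ directly from the proof of \cite[Theorem 9.3]{fgm} (plus the decomposition of nondegenerate finite-capacity algebras as direct sums of simples), whereas you invoke the ``Moreover'' clause of Lemma~\ref{eq}; both are legitimate and amount to the same thing.
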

\begin{proof}Suppose that $z\in J$ is an absolute zero divisor of $J$. Since $J$ is a local order in $Q = Soc(Q)$, there exists an idempotent $e = P(s) \in Q$ for some $s\in J\cap LocInv(Q)$ such that $z\in U_sQ = Q_2(e)$. Now, since $J$ is a local order in $Q$, arguing as in \ref{whylocalarelocal}, $J_2(e)$ is a classical order in $Q_2(e)$, and moreover, the algebra $Q_2(e)$ has finite capacity since $Q=Soc(Q)$, and is nondegenerate since so is $Q$ by hypothesis. Now, the part of the proof of $(i)\Rightarrow(ii)$ of \cite[Theorem 9.3]{fgm} that asserts that a classical order in a nondegenerate artinian Jordan algebra is itself nondegenerate applies here verbatim to give that $J_2(e)$ is nondegenerate, taking into account that a nondegenerate algebra of finite capacity is a direct sum of a finite number of simple algebras with finite capacity \cite[Theorem 6.4.1]{jac-struc}. Now $U_zJ_2(e) \subseteq U_zJ =0$, hence $z$ is an absolute divisor in the nonegenerate algebra $J_2(e)$, and therefore $z=0$.\end{proof}

\begin{lema}\label{quotwithsocleislocal}Let $J$ be a nondegenerate Jordan algebra, and let $Q\supseteq J$ be an algebra of quotients of $J$. If $Q= Soc(Q)$, then $SemiInj(J) \subseteq
LocInv(Q)$. In particular, if $J$ is a nondegenerate algebra with dcc on inner ideals,
then $SemiInj(J) = LocInv(J)$.\end{lema}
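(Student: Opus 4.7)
My plan is to reduce the statement to the fact that, in a nondegenerate Jordan algebra, every element of the socle is locally invertible. First I would note that since $Q$ is an algebra of quotients of the nondegenerate algebra $J$, $Q$ itself is nondegenerate: condition (AQ2) in~\ref{aq} immediately rules out nonzero absolute zero divisors in $Q$. The hypothesis $Q=Soc(Q)$ then says that every element of $Q$ --- and in particular every element of $J\subseteq Q$ --- lies in the socle of a nondegenerate Jordan algebra.

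The key step is to invoke the property, recalled in~\ref{socle-def} from Loos's work, that the socle of a nondegenerate Jordan algebra consists of regular elements. Combining this with the standard fact that regular elements of nondegenerate Jordan systems are strongly regular, together with the characterization in~\ref{localyinvert} of locally invertible elements as the strongly regular ones, yields $Q\subseteq LocInv(Q)$. Since $SemiInj(J)\subseteq J\subseteq Q$, the containment $SemiInj(J)\subseteq LocInv(Q)$ follows at once.

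For the ``in particular'' assertion, suppose $J$ is nondegenerate and satisfies the dcc on principal inner ideals, so that $J=Soc(J)$. Any nondegenerate Jordan algebra is an algebra of quotients of itself (see the remark in~\ref{aq}), so applying the first part with $Q=J$ gives $SemiInj(J)\subseteq LocInv(J)$. The reverse containment is part of the general chain $LocInv(J)\subseteq J\cap LocInv(Q)\subseteq SemiInj(J)$ already noted just before~\ref{localsemiinject}, and together they yield the desired equality $SemiInj(J)=LocInv(J)$.

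The only genuinely delicate point is the passage from \emph{regular element of the socle} to \emph{locally invertible element in the sense of~\ref{localyinvert}}, which amounts to producing a generalized inverse satisfying the three relations recorded there. I expect this to follow directly from Loos's description of the generalized inverse of regular elements in nondegenerate Jordan pairs, transferred through the double pair $V(J)=(J,J)$ as in~\ref{def-double}, so that no further structural analysis of $J$ or of $SemiInj(J)$ beyond the containments already established in the excerpt should be required.
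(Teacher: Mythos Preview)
Your argument contains a genuine gap at exactly the point you flag as ``delicate'': the passage from \emph{regular} to \emph{strongly regular} (equivalently, locally invertible). That implication is simply false in nondegenerate Jordan algebras, and the generalized inverse coming from Loos's pair theory does not rescue it. Concretely, take $Q=M_2(k)^+$, which is simple artinian and hence equals its socle. The nilpotent $x=\left(\begin{smallmatrix}0&1\\0&0\end{smallmatrix}\right)$ is von Neumann regular (with $y=x^t$ one has $U_xy=x$ and $U_yx=y$, so $(x,y)$ is an idempotent of the double pair $V(Q)$), yet $x^2=0$ forces $U_{x^2}Q=0$, so $x\notin U_{x^2}Q$ and $x$ is \emph{not} strongly regular, hence not locally invertible in the sense of~\ref{localyinvert}. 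Your argument, if it worked, would prove $Q=LocInv(Q)$ for every nondegenerate $Q$ with $Q=Soc(Q)$; this example shows that conclusion is too strong.

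The semi-injectivity hypothesis is therefore not incidental but essential, contrary to what you write in the last paragraph. The paper's proof uses it in a direct way: after reducing to $J=Q$ via~\ref{semiinjgotoquotients}, one takes the Fitting decomposition $x=x_2+x_0$ (available since $J$ has dcc on principal inner ideals) and shows the nilpotent part $x_0$ vanishes. The key computation is that for $y_0\in J_0(e)$ one has $U_x^my_0=U_{x_0^m}y_0$; if $x_0^n=0$ but $x_0^{n-1}\neq 0$, then $U_x^2(U_x^{n-2}y_0)=0$, and \emph{semi-injectivity} of $x$ forces $U_x^{n-1}y_0=U_{x_0^{n-1}}y_0=0$ for all such $y_0$, contradicting nondegeneracy of $J_0(e)$. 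Without the hypothesis $x\in SemiInj(J)$ there is no mechanism to kill $x_0$, as the matrix example confirms.
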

\begin{proof}Since $SemiInj(J) \subseteq SemiInj(Q)$ by \ref{semiinjgotoquotients}, we can assume that $J=Q = Soc(Q)$. Take then $x\in SemiInj(J)$, and let $x = x_2+x_0$ with $x_i \in J_i(e)$ be its Fitting decomposition with respect to the corresponding idempotent $e$, which exists since $J$ has dcc on principal inner ideals (see \cite[Theorem 1]{loos-fitting}). Our aim is to prove that $x_0 =0$. If, on the contrary $x_0 \ne 0$, the element $x_0$ being nilpotent implies that there exists $n\ge 2$ such that $x_0^n =0$ and $x^{n-1}\ne 0$. Since $J$ is nondegenerate, so is  $J_0(e)$ and we can choose $z_0\in J_0(e)$ such that $U_{x_0^{n-1}}z_0 \ne 0$. An easy induction using the multiplication properties of the Peirce decomposition shows that for any $y_0 \in J_0(e)$ and any $m\ge 0$, $U^m_xy_0 = U_{x_0^m}y_0$. Since $x_0^n =0$, we have  $U_x^2U_{x^{n-2}}= U_x^nz_0 = U_{x_0^n}y_0$, and since $x$ is semi-injective, this implies that $0 = U_xU_x^{n-2}y_0 = U_x^{n-1}y_0 = U_{x_0^{n-1}}y_0$, which contradicts the choice of $y_0$. Therefore $x = x_2$ is invertible in $J_2(e)$, hence $x \in LocInv(J)$.
\end{proof}

\begin{lema}\label{LOofsocle}Let $Q$ be a nondegenerate Jordan algebra with $Q = Soc(Q)$. If $Q$ is a local order in a Jordan algebra $\tilde Q$ with $\tilde Q = Soc(\tilde Q)$, then $\tilde Q = Q$.\end{lema}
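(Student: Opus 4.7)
The plan is to show $\tilde Q \subseteq Q$, so I will fix an arbitrary $\tilde q \in \tilde Q$ and show that $\tilde q \in Q$. Since $Q$ is a local order in $\tilde Q$, axiom (LO2) gives some $x \in SemiInj(Q)$ such that $\tilde q \in U_x\tilde Q$ and $U_xQ$ is a classical order in $U_x\tilde Q$.

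First, I would exploit the hypothesis $Q = Soc(Q)$ together with \ref{quotwithsocleislocal} to obtain $SemiInj(Q) = LocInv(Q)$, so that $x$ is actually locally invertible in $Q$. Let $e = P(x) \in Q$ be the associated idempotent and $x^\sharp \in U_eQ$ its generalized inverse. By (LO1) of the local order definition, $x$ is also locally invertible in $\tilde Q$, and since the characterizing equations of \ref{localyinvert} are satisfied by $x^\sharp$ already inside $Q$, uniqueness of the generalized inverse forces the idempotent of local invertibility of $x$ in $\tilde Q$ to coincide with $e$. Therefore $U_xQ = U_eQ$ and $U_x\tilde Q = U_e\tilde Q$ are unital Jordan algebras sharing the identity element $e$.

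Next, I will use that $e \in Q = Soc(Q)$ and $e \in \tilde Q = Soc(\tilde Q)$, so that both local algebras $Q_e$ and $\tilde Q_e$ have finite capacity by \cite[Lemma 0.7(b)]{pi-i}. Via the identification $U_eQ \cong Q_e$ (and similarly for $\tilde Q$) mentioned in \ref{locally artinian}, the algebras $U_eQ \subseteq U_e\tilde Q$ are then nondegenerate unital Jordan algebras of finite capacity. The key auxiliary fact I need is that in such an algebra every injective element is invertible; this reduces, via the Simple Structure Theorem, to the cases of a simple artinian Jordan algebra (where it is classical) and of a nonartinian quadratic factor (where an element is injective if and only if its generic norm is nonzero, if and only if it is invertible).

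Finally, since $U_eQ$ is a classical order in $U_e\tilde Q$, the element $\tilde q$ has an injective denominator $d \in U_eQ$ with $U_d\tilde q \in U_eQ$, and $d$ is invertible in $U_e\tilde Q$ by (ClQ1). By the previous paragraph, $d$ is already invertible in $U_eQ$, and uniqueness of inverses forces both inverses to coincide as an element $d^{-1} \in U_eQ$. Applying $U_{d^{-1}}$ in $U_e\tilde Q$ then gives $\tilde q = U_{d^{-1}}(U_d\tilde q) \in U_{d^{-1}}(U_eQ) \subseteq U_eQ \subseteq Q$, finishing the argument. The main obstacle I anticipate is the bookkeeping step that the idempotent $P(x)$ is the same whether computed in $Q$ or in $\tilde Q$, together with the careful use of the finite-capacity hypothesis to promote injectivity to invertibility; once this is in place, the denominator-plus-inversion calculation is essentially forced.
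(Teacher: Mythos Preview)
Your proof is correct and follows the same essential idea as the paper: pick $x\in SemiInj(Q)$ with $\tilde q\in U_x\tilde Q$ and $U_xQ$ a classical order in $U_x\tilde Q$, then use that $U_xQ$ already has finite capacity (since $Q=Soc(Q)$) to force $U_xQ=U_x\tilde Q$, whence $\tilde q\in Q$.

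The paper's argument is shorter because it bypasses the idempotent bookkeeping you flagged as the ``main obstacle'': instead of producing $e=P(x)$ and checking it agrees in $Q$ and $\tilde Q$, the paper invokes \ref{localsemiinject} to get $U_xQ\cong Q_{x^2}$ directly from $x\in SemiInj(Q)$, and then the finite capacity of $Q_{x^2}$ (since $x^2\in Soc(Q)$) is immediate. Your route through $e$, the identification $U_eQ\cong Q_e$, and the explicit denominator calculation $\tilde q=U_{d^{-1}}(U_d\tilde q)$ is perfectly sound and makes explicit the step the paper leaves implicit (namely, that in a nondegenerate unital algebra of finite capacity every injective element is invertible, which is exactly why a classical order in such an algebra must fill it up). So the two arguments differ only in packaging: the paper's version trades your idempotent-coherence verification for a one-line appeal to \ref{localsemiinject}.
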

\begin{proof}Take $\tilde q\in \tilde Q$. By \ref{whylocalarelocal}, there exists an element $x\in SemiInj(Q)$ such that $\tilde q\in U_x\tilde Q$, and $U_xQ$ is a classical order in $U_x\tilde Q$. Since $Q= Soc(Q)$, by \ref{localsemiinject} $U_xQ\cong Q_{x^2}$  is artinian, hence $U_xQ = U_x\tilde Q$, and therefore $\tilde q\in U_x\tilde Q = U_xQ\subseteq Q$. Since this holds for an arbitrary $\tilde q\in \tilde Q$, we get $Q = \tilde Q$.
\end{proof}

\begin{lema}\label{ann_lc}Let $J$ be a nondegenerate Jordan algebra wich is a local order in a Jordan algebra $\tilde J$ such that $Soc(\tilde J) =\tilde J$. Then
\begin{enumerate}\item[(i)]$ann_J(X) = ann_{\tilde J}(X)\cap J$ for any $X\subseteq J$.
\item[(ii)]For any subsets $X, Y\subseteq J$, $ann_J(X)\subseteq ann_J(Y)$ if and only if $ann_{\tilde J}(X)\subseteq
ann_{\tilde J}(Y)$, and $ann_J(X)\neq0$ if and only if $ann_{\tilde J}(X)\neq0$.
\item[(iii)] $J$ satisfies acc on annihilators of its elements.
\end{enumerate}
\end{lema}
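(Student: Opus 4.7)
The strategy is to prove the three parts in sequence, with part (i) providing the main bridge between annihilators computed in $J$ and in $\tilde J$. For (i), the containment $ann_{\tilde J}(X)\cap J\subseteq ann_J(X)$ is immediate, since every annihilator condition in $\hat{\tilde J}$ restricts to the analogous one in $\hat J$. The reverse containment is the substantive claim: given $z\in ann_J(X)$ and $x\in X$, I need to promote each of the four conditions $U_xU_z\hat J=U_zU_x\hat J=V_{x,z}\hat J=V_{z,x}\hat J=0$ from $\hat J$ to $\hat{\tilde J}$. The idea is this: if $U_xU_z\tilde q\ne 0$ for some $\tilde q\in\tilde J$, then Lemma \ref{intersection inner ideals} produces $a\in J$ with $0\ne U_{U_xU_z\tilde q}a\in J$, and by the structurality of $U_xU_z$ this element equals $U_xU_zU_{\tilde q}(U_zU_xa)$; but $U_zU_xa=0$ by the already established condition $U_zU_x\hat J=0$, yielding the contradiction. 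The remaining three conditions are handled identically via the structural identities $U_{U_zU_x\tilde q}=U_zU_xU_{\tilde q}U_xU_z$, $U_{V_{x,z}\tilde q}=V_{x,z}U_{\tilde q}V_{z,x}$ and $U_{V_{z,x}\tilde q}=V_{z,x}U_{\tilde q}V_{x,z}$, each time invoking the correspondingly matched annihilator condition from the $\hat J$-side.

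For part (ii), the equivalence $ann_J(X)\ne 0\Leftrightarrow ann_{\tilde J}(X)\ne 0$ follows immediately from (i) combined with Lemma \ref{intersection inner ideals}: any nonzero $\tilde z\in ann_{\tilde J}(X)$ produces some nonzero $U_{\tilde z}a\in ann_{\tilde J}(X)\cap J=ann_J(X)$. The $\Leftarrow$ direction of the inclusion equivalence is free by intersecting with $J$ and applying (i). The $\Rightarrow$ direction is the step I expect to be the main obstacle: starting from $\tilde z\in ann_{\tilde J}(X)$, the inner-ideal property of $ann_{\tilde J}(X)$ yields $U_{\tilde z}\hat{\tilde J}\cap J\subseteq ann_{\tilde J}(X)\cap J=ann_J(X)\subseteq ann_J(Y)\subseteq ann_{\tilde J}(Y)$ by the hypothesis and (i). To conclude $\tilde z\in ann_{\tilde J}(Y)$, I would assume for contradiction that some annihilator condition fails for some $y\in Y$, say $U_y\tilde z\ne 0$, and iterate the argument used in (i): apply Lemma \ref{intersection inner ideals} to $U_y\tilde z$ and to intermediate elements produced, and use the structural identity $U_{U_y\tilde z}=U_yU_{\tilde z}U_y$ to exhibit an element of $U_{\tilde z}\hat{\tilde J}\cap J$ that fails to annihilate $y$, contradicting the displayed containment. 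The delicate point here is to select the auxiliary elements provided by \ref{intersection inner ideals} so that, after several structural reductions, the final witness lands in $J$ while still recording the original nonvanishing behaviour; nondegeneracy of $\tilde J$ (guaranteed by $\tilde J=Soc(\tilde J)$) is what forces a structural operator on $\tilde J$ whose image on $J$ is trivial to vanish altogether.

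Part (iii) is then a direct corollary: given an ascending chain $ann_J(x_1)\subseteq ann_J(x_2)\subseteq\cdots$ of annihilators of single elements in $J$, part (ii) lifts it to an ascending chain $ann_{\tilde J}(x_1)\subseteq ann_{\tilde J}(x_2)\subseteq\cdots$ in $\tilde J$. Since $\tilde J=Soc(\tilde J)$ satisfies the ascending chain condition on annihilators of single elements by \ref{socle-def}, the lifted chain stabilizes, and a second application of (ii) forces the original chain in $J$ to stabilize as well.
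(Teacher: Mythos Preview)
Your argument for part~(i) contains a genuine error: the identities $U_{V_{x,z}\tilde q}=V_{x,z}U_{\tilde q}V_{z,x}$ and $U_{V_{z,x}\tilde q}=V_{z,x}U_{\tilde q}V_{x,z}$ that you invoke are \emph{not} valid Jordan identities. The operators $V_{x,z}$ are not structural transformations in general (only $U_a$ and the Bergmann operators $B_{a,b}$ are), and a quick check in an associative $A^+$ with, say, $z=1$ and $\tilde q=1$ gives $U_{V_{x,1}1}a=U_{2x}a=4xax$ versus $V_{x,1}U_1V_{1,x}a=x(xa+ax)+(xa+ax)x=x^2a+2xax+ax^2$, which disagree. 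Your treatment of the $U_xU_z$ and $U_zU_x$ cases is correct, since $U_xU_z$ genuinely is structural with adjoint $U_zU_x$; but the $V$ conditions cannot be handled ``identically'', and the paper's use of \cite[Remark~1.7]{mc-ann} shows that the triple-product condition $\{z,x,\tilde J\}=0$ is not redundant. The paper bypasses this obstacle entirely: rather than promoting each annihilator condition by structurality, it uses Litoff's theorem~\ref{litoff_QJA} and Lemma~\ref{InTEx_idempotents} to trap $x$, $z$, and the test element $q$ inside a Peirce space $\tilde J_2(e)$ where $J_2(e)$ is a \emph{classical} order, and then quotes the known annihilator transfer \cite[Proposition~2.8]{fgm} for classical orders.

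Your sketch for the forward implication in~(ii) is also too optimistic. The obstruction is that $\tilde z\in ann_{\tilde J}(X)$ need not lie in $J$, so the structurality trick from~(i) produces elements of the form $U_yU_{\tilde z}U_ya$ with $a\in J$, and there is no mechanism that forces the intermediate element $U_{\tilde z}U_ya$ into $U_{\tilde z}\hat{\tilde J}\cap J$; iterating only lengthens the expression without landing it in $J$. The paper's proof of this direction is of a different character: it uses the dcc on principal inner ideals of $\tilde J$ to choose a counterexample $a\in ann_{\tilde J}(X)\setminus ann_{\tilde J}(Y)$ with $U_a\tilde J$ minimal, completes $a$ to an idempotent of the double pair $V(\tilde J)$, passes to a strong frame, and reduces to the case where $U_a\tilde J$ is a minimal inner ideal, at which point \ref{intersection inner ideals} finally furnishes a nonzero element of $J$ in $U_a\tilde J$ that yields the contradiction. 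Your part~(iii) is fine once~(ii) is in hand.
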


\begin{proof} (i) Clearly, we can assume that $X =\{x\}$ has just one element. The containment $ann_{\tilde J}(x)\cap J\subseteq ann_J(x)$ being obvious, we only have to prove the reverse containment, which will follow from the containment $ann_J(x)\subseteq  ann_{\tilde J}(x)$.

Take then $z\in ann_J(x)$. Since $U_zx=U_xz=0$, according to \cite[Remark 1.7]{mc-ann}, we only need to prove $U_zU_x\tilde J=0= \{z,x,\tilde J\}$. Take an arbitrary $q \in \tilde J$. Since $\tilde J= Soc(\tilde J)$ and $\tilde J$ is nondegenerate by \ref{intersection inner ideals}, we can apply \ref{litoff_QJA} to find an idempotent $e\in \tilde J$ such that $x, z, q \in \tilde J_2(e)$.

Now, by \ref{InTEx_idempotents}, there exists $s\in SemiInj(J)$ such that $e=P(s)$, and since $J_{s^2}$ is a classical order in $\tilde J_{s^2}$ by LOS3, arguing as in \ref{quotlocalsemiinj}, we get that $U_sJ$ is a classical order in $U_s\tilde J$. Therefore, applying \cite[Corollary 2.3]{fgm} to  the containment $U_sJ\subseteq J_2(e) \subseteq \tilde J_2(e) = U_s\tilde J$, we obtain that $J_2(e)$ is a classical order in $\tilde J_2(e)$. Then $ann_{J_2(e)}(x) = ann_{\tilde J_2(e)}(X)\cap J_2(e)$ by \cite[Proposition 2.8]{fgm}. Since $x,z,q\in J_2(e)$, this implies that $U_zU_xq=0= \{x,z,q\}$, as desired.

(ii) That $ann_{\tilde J}(X)\subseteq ann_{\tilde J}(Y)$ implies $ann_J(X)\subseteq
ann_J(Y)$ readily follows from (i). For the reciprocal, assume that $ann_J(X)\subseteq
ann_J(Y)$ and suppose that $ann_{\tilde J}(X)\not\subseteq ann_{\tilde J}(Y)$. Since $\tilde J$ satisfies the dcc on principal inner ideals, we can choose an inner ideal $U_a\tilde J$, minimal among the inner ideals generated by the elements of $ann_{\tilde J}(X)$ which do not belong to $ann_{\tilde J}(Y)$. Since $a\in Soc(\tilde J) = \tilde J$, we can complete $a$ to an idempotent $e=(e^+,e^-)$ with $a= e^+$ of the pair $V(\tilde J) = (\tilde J,\tilde J)$. We then have $U_a\tilde J = Q_{e^+}\tilde J \supseteq Q_{e^+}Q_{e^-}\tilde J =\tilde J_2(e)\supseteq U_a\tilde J$, and $U_a\tilde J$ is nondegenerate and has finite capacity. Then \cite{loos-finiteness} implies that the pair $V(\tilde J_2(e))$ contains a strong frame $F = \{e_1,\dots, e_m\}$, hence $f = \sum F= e_1+\cdots + e_m$ is complete in $V(\tilde J_2(e))$, and therefore $f^+$ is invertible in the algebra $V(\tilde J_2(e))^+ = \tilde J_2(e)$. Then $V(\tilde J)^+_2(f)= V(\tilde J)^+_2 = U_a\tilde J$. Now, if $e_i^+ \in ann_{\tilde J}(Y)$ for all $i$, then $f^+ \in ann_{\tilde J}(U)$, hence $U_a\tilde J = V(\tilde J)_2^+(f) \subseteq ann_{\tilde J}(a)$, which contradicts the choice of $a$. Thus we can assume that $e_1^+ \not\in ann_{\tilde J}(Y)$, and since $e_1^+ \in U_a\tilde J$, the minimality of $U_a\tilde J$ implies that $U_a\tilde J = U_{e_1^+}\tilde J$. Thus, since $e_1$ is a division idempotent, we can assume that $U_a\tilde J$ is a minimal inner ideal. We now apply \ref{intersection inner ideals}(i) to find an element $0\ne d\in U_aJ\cap J \subseteq ann_{\tilde J}(X)\cap J = ann_J(X)\subseteq ann_J(Y) = ann_{\tilde J}(Y)\cap J \subseteq ann_{\tilde J}(Y)$.  But this implies that $a \in U_a\tilde J=U_d\tilde J \subseteq ann_{\tilde J}(Y)$, a contradiction.

If $ann_{\tilde J}(X) = 0$, then, by (i) $ann_J(X) = ann_{\tilde J}(X)\cap J =0$. The reciprocal is straightfoward  from \ref{intersection inner ideals}.

(iii) Since $\tilde J$ satisfies dcc on principal inner ideals, by \cite{fgl} it satisfies  acc on annihilators of single elements, and so does $J$ by (ii).
\end{proof}

\begin{teorema}\label{localordersareaq}Let the Jordan algebra $J$ be a local order in the algebra $Q$ satisfying $Soc(Q) = Q$, then $Q$ is a general algebra of quotients of $J$.
\end{teorema}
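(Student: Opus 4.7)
The plan is to establish both (AQ1) and (AQ2) by reducing each to the corresponding assertion for classical orders on Peirce 2-spaces inside $Q$. The key technical step is that for any finite subset of $Q$, one can find an idempotent $f \in Q$ whose Peirce 2-space $Q_2(f)$ contains that subset and such that $J_2(f) = J \cap Q_2(f)$ is a classical order in $Q_2(f)$. Once this is in place, denominator and density information transfer cleanly from $J_2(f) \subseteq Q_2(f)$ up to $J \subseteq Q$.

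To construct such an $f$, first apply Litoff's Theorem \ref{litoff_QJA} (available because $Q = Soc(Q)$) to any prescribed finite list $\{q, c, a_1, \ldots, a_n\} \subseteq Q$, producing an idempotent $f \in Q$ that captures them all in $Q_2(f)$. The argument from the proof of \ref{InTEx_idempotents} only needs the intersection half of innerly tight extensions, namely $U_{\tilde a} J \cap J \ne 0$ for $0 \ne \tilde a \in Q$, which holds here by \ref{intersection inner ideals}(i); it yields $y \in SemiInj(J)$ with $f = P(y)$. Then LOS3 of \ref{whylocalarelocal} gives that $J_{y^2}$ is a classical order in $Q_{y^2}$, which by \ref{quotlocalsemiinj} translates to $U_y J$ being a classical order in $U_y Q = Q_2(f)$. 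An application of \cite[Corollary 2.3]{fgm} to $U_y J \subseteq J_2(f) \subseteq Q_2(f)$ then shows that $J_2(f)$ is itself a classical order in the unital, nondegenerate, finite-capacity algebra $Q_2(f)$. In particular $Q_2(f)$ is a general algebra of quotients of $J_2(f)$ (by \ref{eq} and \ref{classical_order}), and $J_2(f) = LC(J_2(f))$ (since for each $a \in J_2(f)$, \ref{localofclassicalordersatsocle} together with \ref{eq} makes $J_2(f)_a$ a classical order in the unital finite-capacity $Q_2(f)_a$), so essential and dense inner ideals of $J_2(f)$ coincide by \ref{densoesesencialenlc}.

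The denominator transfer is then immediate: for any $s \in {\cal D}_{J_2(f)}(q)$, the denominator conditions give $U_s q, s \circ q \in J_2(f) \subseteq J$, so by the context-free criterion recalled at the end of \ref{denominadores}, $s^4 \in {\cal D}_J(q)$. The inner ideal of $J_2(f)$ generated by $\{s^4 : s \in {\cal D}_{J_2(f)}(q) \cap Inj(J_2(f))\}$ lies in ${\cal D}_J(q) \cap J_2(f)$ and contains injective elements of $J_2(f)$, hence is essential and therefore dense in $J_2(f)$ by \cite[Theorem 10.2]{fgm}. Thus ${\cal D}_J(q) \cap J_2(f)$ is dense in $J_2(f)$. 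For (AQ2), taking $f$ to cover just $q$, if $U_q {\cal D}_J(q) = 0$ then $U_q$ annihilates this dense inner ideal of $J_2(f)$, and since $Q_2(f)$ is a general algebra of quotients of $J_2(f)$, \cite[Lemma 2.4]{densos} forces $q = 0$, a contradiction. For (AQ1), choose $f$ covering $\{q, c, a_1, \ldots, a_n\}$ so that $c, a_1, \ldots, a_n \in J_2(f)$; the straightforward identity $({\cal D}_J(q) : a_1 : \cdots : a_n) \cap J_2(f) = ({\cal D}_J(q) \cap J_2(f) : a_1 : \cdots : a_n)$ in $J_2(f)$, together with \cite[Lemma 1.8]{densos} and the density just established, yields $U_c(({\cal D}_J(q) \cap J_2(f) : a_1 : \cdots : a_n)) \ne 0$ in $J_2(f)$, whence $U_c({\cal D}_J(q) : a_1 : \cdots : a_n) \ne 0$ in $J$.

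The main obstacle is precisely the descent of the classical order structure to $J_2(f)$ for idempotents $f \in Q$ not directly arising from LOS2: this requires invoking the idempotent lifting of \ref{InTEx_idempotents} (noting that its proof uses only the intersection condition, which is available for us via \ref{intersection inner ideals}(i)) and then the intermediate algebra argument of \cite[Corollary 2.3]{fgm}. Once this step is settled, the remainder is routine propagation of density and denominator conditions.
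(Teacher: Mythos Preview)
Your proof is correct and shares with the paper's argument the same central reduction: use Litoff's theorem \ref{litoff_QJA} together with the idempotent lifting of \ref{InTEx_idempotents} (whose proof indeed only needs the intersection condition, available via \ref{intersection inner ideals}(i)) and \cite[Corollary 2.3]{fgm} to pass from $U_sJ$ to $J_2(f)$, obtaining that $J_2(f)$ is a classical order in $Q_2(f)$ for suitable idempotents $f\in Q$.

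Where the two arguments diverge is in how (AQ1) and (AQ2) are extracted from this reduction. The paper first observes globally that $J=LC(J)$ (since each $J_x$ is a classical order in the finite-capacity $Q_x$ by LOS3), so by \ref{densoesesencialenlc} it suffices to prove that ${\cal D}_J(q)$ is \emph{essential} in $J$; it then shows ${\cal D}_J(q)\cap U_aJ\neq 0$ for each nonzero $a\in J$ by producing a specific element $z=U_dy$ with $d\in {\cal D}_{J_2(e)}(q)\cap U_aJ_2(e)$ and verifying the denominator conditions (Di)--(Div) for $z$ by hand. For (AQ2) the paper uses strong nonsingularity of $J$ and \ref{intersection inner ideals}. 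By contrast, you work entirely inside $J_2(f)$: you use the context-free criterion $s^4\in{\cal D}_J(q)$ from \ref{denominadores} to get that ${\cal D}_J(q)\cap J_2(f)$ is dense in $J_2(f)$, and then verify the density condition for ${\cal D}_J(q)$ in $J$ directly via the identity $({\cal D}_J(q):a_1:\cdots:a_n)\cap J_2(f)=({\cal D}_J(q)\cap J_2(f):a_1:\cdots:a_n)$, which holds because Peirce-2 multiplication keeps $U_ax$ and $x\circ a$ inside $J_2(f)$. Your (AQ2) argument via \cite[Lemma 2.4]{densos} in $J_2(f)$ is also different from the paper's.

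Your route avoids the somewhat delicate hand-verification of (Di)--(Div) that the paper carries out, at the cost of bypassing the clean global statement $J=LC(J)$ (which the paper records and reuses elsewhere). Both arguments are of comparable length; yours is perhaps more modular in that the transfer of denominators is handled uniformly by the $s^4$ criterion rather than by an ad hoc computation.
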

\begin{proof}Since $Q=Soc(Q)$, all the local algebras $Q_q$ for $q \in Q$ have finite capacity, and since $J_x$ is a classical order in the algebra $Q_x$ by LOS3, it is  LC, hence $x\in LC(J)$, that is $J = LC(J)$. Then dense inner ideals are essential inner ideals, so we only need to prove that for any $q\in Q$ the inner ideal ${\cal D}_J(q)$ is essential, and $U_q{\cal D}_J(q)\ne 0$. We begin by proving that the first statement implies the second one. Indeed, if $L$ is an essential inner ideal of $J$ and $U_qL=0$, then any $z\in U_qQ\cap J$ has $U_zL=0$, and therefore it is an essential zero divisor.
Since $J$ is strongly nonsingular, this implies $U_qQ\cap J = 0$, which contradicts \ref{intersection inner ideals}(i).

Now, to prove the first assertion we have to prove that ${\cal D}_J(q)\cap U_aJ\ne 0$ for any nonzero $a \in J$. By \ref{litoff_QJA} and \ref{InTEx_idempotents}, there exists an idempotent $e = P(s)$ for a semiinjective element $x\in J$, such that $a,q \in Q_2(e)$. As we have already noticed before, since $U_sJ$ is a classical order in $U_sQ$, then $J_2(e)$ is a classical order in $Q_2(e)$ (\ref{almostpeirce}, \ref{whylocalarelocal}). Then ${\cal D}_{J_2(e)}(q)$ is an essential inner ideal of $J_2(e)$, hence there exists an element $d\in {\cal D}_{J_2(e)}(q)\cap U_aJ_2(e)$. Note that $J_2(e)$ is strongly nonsingular, since it is LC and \ref{lckillsstrongzeta} applies, and there exists an element $y\in {\cal D}_{J_2(e)}(q)$ such that $0 \ne z=U_dy\in {\cal D}_{J_2(e)}(a)\cap U_aJ$.  Let us see that $z\in {\cal D}_J(q)$.

Since $z \in {\cal D}_{J_2(e)}(a) \subseteq J_2(e)$,  we have $U_zq$ and  $U_qz$ belong to $J_2(e)\subseteq J$, so (Di) and (Dii) of \ref{denominadores} hold. Also, $z\circ q \in J_2(e) \subseteq J$, and if $c \in J$, we get $\{z,a,c\} = \{U_dy, a,c\} = \{d, \{y, d, a\}, c\}- \{U_da,y,c\} \in \{J,J_2(e),J\}+\{J_2(e),J,J\} \subseteq J$, so (Div) of \ref{denominadores} holds. Finally, $U_aU_zc = U_aU_{U_dy}c= U_aU_d(U_yJ) \subseteq U_aU_dJ_2(e) \subseteq J_2(e) \subseteq J$ because $U_yJ = U_{U_ey}J = U_eU_yU_eJ \subseteq Q_2(e)$ gives $U_yJ\subseteq Q_2(e)\cap J = J_2(e)$. Therefore $U_aU_zJ\subseteq J$, and (Diii) of \ref{denominadores} also holds, and this implies that $0\ne z\in {\cal D}_J(q)\cap U_aJ$, and thus, the essentiality, hence the density, of ${\cal D}_J(q)$.
\end{proof}

%
%

\section{Local Goldie-like conditions and local orders.}

In this section we give a Goldie-like characterization  of quadratic Jordan algebras that safisfy
local Goldie conditions extending the results given in \cite{fg1,fg2}. We first turn our attention to the Local Goldie Conditions as introduced in \cite{fg1}.

\begin{apartado}\label{local conditions} An element $x$ in a Jordan algebra $J$ has \emph{finite uniform} (or \emph{Goldie}) \emph{dimension}
if $U_xJ$ does not contain infinite direct sums of inner ideals of
$J$ (in the sense of \cite[p. 425]{fgm}).

Every element in a simple Jordan algebra with
dcc on principal inner ideals which is not a nonartinian quadratic
factor has finite uniform dimension.
 \end{apartado}

\begin{lema}\label{ii_damour}
Let $J$ be a nondegenerate Jordan algebra. An element $x$ in $J$ has finite uniform dimension if and only if the local algebra $J_x$ has finite uniform dimension.
\end{lema}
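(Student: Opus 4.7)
The plan is to construct, from an infinite direct family of nonzero inner ideals on either side of the equivalence, an infinite direct family on the other side. Both directions will use the fundamental formula $U_{U_xm} = U_x U_m U_x$ together with the nondegeneracy of $J$, which in particular provides the characterization $Ker\,x = \{y \in J : U_x y = 0\}$ recalled in \ref{local algebras}.

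For the implication $(\Leftarrow)$ I argue by contrapositive. Starting with an infinite family $\{L_i\}_{i \in I}$ of nonzero inner ideals of $J$ contained in $U_xJ$ and forming a direct sum, for each $i$ I choose a nonzero $l_i \in L_i$ and write $l_i = U_x a_i$ for some $a_i \in J$. I then set $M_i = U^{(x)}_{\bar a_i} J_x = \overline{U_{a_i}U_xJ}$, the principal inner ideal of $J_x$ determined by $\bar a_i$. Each $M_i$ is nonzero: if $M_i = 0$, then $U_x U_{a_i} U_x J = U_{l_i} J = 0$ by the fundamental formula, forcing $l_i = 0$ by nondegeneracy. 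Directness of $\{M_i\}$ then transfers from that of $\{L_i\}$: an equation $\bar c = U^{(x)}_{\bar a_i} \bar y = \sum_{j \neq i} U^{(x)}_{\bar a_j} \bar y_j$ in $J_x$ translates, upon multiplication by $U_x$ and use of the fundamental formula, into $U_{l_i} y = \sum_{j \neq i} U_{l_j} y_j$ in $J$; directness of $\{L_i\}$ then forces $U_{l_i}y = 0$, equivalently $\bar c = 0$ in $J_x$. This contradicts the finite uniform dimension of $J_x$.

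The implication $(\Rightarrow)$ is dual and proceeds symmetrically: from a direct family $\{M_i\}$ of nonzero inner ideals of $J_x$, I choose nonzero $\bar m_i \in M_i$ with lifts $m_i \in J$, set $l_i = U_x m_i$ (nonzero by nondegeneracy) and $L_i = U_{l_i}J = U_xU_{m_i}U_xJ \subseteq U_xJ$. These are nonzero inner ideals of $J$, and the direct sum property of $\{M_i\}$ transfers to $\{L_i\}$ by the mirror-image fundamental-formula computation.

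The main subtlety I need to navigate is that the naive projection $L \mapsto (L + Ker\,x)/Ker\,x$ can vanish on a nonzero inner ideal $L \subseteq U_xJ \cap Ker\,x$, so the correspondence cannot be the identification by pullback alone; the construction must instead be routed through principal inner ideals generated by suitably chosen lifts or preimages of elements, and the fundamental formula ensures that directness transfers faithfully in both directions.
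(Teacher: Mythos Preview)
Your hands-on construction is attractive, but it does not quite close. The paper's definition of ``forms a direct sum'' (see \ref{goldie stuff}) requires $K_i\cap[\sum_{j\ne i}K_j]_J=0$, where $[\,\cdot\,]_J$ denotes the \emph{inner ideal generated}, not merely $K_i\cap\sum_{j\ne i}K_j=0$. In both directions you only check the latter: your displayed equation $\bar c=U^{(x)}_{\bar a_i}\bar y=\sum_{j\ne i}U^{(x)}_{\bar a_j}\bar y_j$ places $\bar c$ in the raw sum $\sum_{j\ne i}M_j$, and after applying $U_x$ and the fundamental formula you land in $\sum_{j\ne i}L_j\subseteq[\sum_{j\ne i}L_j]_J$, which is enough to conclude $U_{l_i}y=0$ --- but that only yields $M_i\cap\sum_{j\ne i}M_j=0$, not $M_i\cap[\sum_{j\ne i}M_j]_{J_x}=0$. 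Since sums of inner ideals need not be inner ideals in Jordan algebras, the inner-ideal closure can be strictly larger, and your argument does not control it. The same gap appears symmetrically in your $(\Rightarrow)$ direction.

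The paper bypasses this difficulty entirely by invoking \cite[Proposition 2.4]{am-local}: for nondegenerate $J$ there is a bijective order-preserving correspondence between the inner ideals of $J_x$ and the inner ideals of $J$ contained in $U_xJ$. Since $U_xJ$ is itself an inner ideal of $J$, both posets are complete lattices (meets are intersections, joins are inner-ideal closures of sums), and an order isomorphism between complete lattices preserves all joins; hence it preserves the relation $K_i\cap\bigvee_{j\ne i}K_j=0$ and therefore the direct-sum property in the required strong sense. This also dissolves the ``subtlety'' you flag at the end: the correspondence of \cite{am-local} is not the naive projection $L\mapsto\overline L$ but rather $L\mapsto\overline{U_x^{-1}(L)}$ (with inverse $\bar M\mapsto U_xM$), and one checks easily that this sends nonzero inner ideals to nonzero inner ideals. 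If you want to salvage your approach, you would need to route your principal-inner-ideal construction through this lattice isomorphism rather than through elementwise equations.
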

\begin{proof}  By   \cite[Proposition 2.4]{am-local} it suffices to note that  for any element $x$ in a
nondegenerate Jordan algebra $J$ there is a bijective order preserving correspondence between the set of inner ideals of the local algebra $J_x$ and the set of inner ideals of $J$ contained in $U_xJ$.\end{proof}

\begin{lema}\label{quot_intersection}    Let $J$ be a nondegenerate Jordan algebra and let $Q$ be an algebra of quotients of $J$. Then any nonzero ideal $I$ of $Q$ is an algebra of quotients of $I\cap J$.
\end{lema}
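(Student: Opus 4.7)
The plan is to verify the two defining conditions (AQ1) and (AQ2) of \ref{aq} for the pair $I\cap J\subseteq I$. The preliminaries are routine: $I\cap J\ne 0$ because $I$ is a nonzero inner ideal of $Q$ and any such inner ideal meets $J$ nontrivially by \ref{intersection inner ideals}; and $I\cap J$ is easily checked to be an ideal of $J$ using that $I$ is an ideal of $Q$ together with $\widehat{J}\subseteq\widehat{Q}$.

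The central technical point is the inclusion ${\cal D}_J(q)\cap I\subseteq {\cal D}_{I\cap J}(q)$ for every $q\in I$: if $x\in {\cal D}_J(q)\cap I$, then each of the products in conditions (Di)--(Div') of \ref{denominadores} lies in $J$ because $x\in {\cal D}_J(q)$, and in $I$ because $x,q\in I$ and $I$ is an ideal of $Q$ (the unital-hull terms being handled via $x^2=U_x1\in I$ and $q^2=U_q1\in I$). A parallel direct computation yields the identity
\[
(K\cap I:a_1:\ldots:a_n)_{I\cap J}=(K:a_1:\ldots:a_n)_J\cap I
\]
for any inner ideal $K$ of $J$ and any $a_i\in I\cap J$, since $U_{a_i}x$ and $x\circ a_i$ automatically belong to $I$ when $x\in I$ and $a_i\in I\cap J$.

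With these preparations, (AQ1) reduces to showing that, for every $0\ne c\in I\cap J$ and any $a_1,\ldots,a_n\in I\cap J$, $U_c(L\cap I)\ne 0$, where $L=({\cal D}_J(q):a_1:\ldots:a_n)_J$ is dense in $J$ by iterated application of \cite[Lemma 1.8]{densos}. The density of $L$ first yields $y_1\in L$ with $U_cy_1\ne 0$; applying density again at the nonzero element $U_cy_1\in J$ produces $y_2\in L$ with $U_{U_cy_1}y_2=U_cU_{y_1}U_cy_2\ne 0$ via the fundamental identity $U_{U_ab}=U_aU_bU_a$, and then $z=U_{y_1}U_cy_2$ lies in $L$ (as $L$ is an inner ideal of $J$) and in $I$ (since $c\in I$ and $I$ is an ideal of $Q$), so $z\in L\cap I$ with $U_cz\ne 0$. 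The argument for (AQ2) follows the same two-stage pattern: starting from $y_0\in {\cal D}_J(q)$ with $0\ne U_qy_0\in I\cap J$, density of ${\cal D}_J(q)$ at $U_qy_0$ supplies $y_1\in {\cal D}_J(q)$ with $U_qU_{y_0}U_qy_1\ne 0$, and then $z=U_{y_0}U_qy_1$ belongs to ${\cal D}_J(q)\cap I\subseteq {\cal D}_{I\cap J}(q)$ and satisfies $U_qz\ne 0$. The main obstacle is precisely this two-stage density trick, which couples the fundamental identity with the ideal property of $I$ to arrange the witness inside $I$.
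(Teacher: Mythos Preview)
Your proof is correct and follows essentially the same route as the paper's: both establish ${\cal D}_J(q)\cap I\subseteq {\cal D}_{I\cap J}(q)$ and then use the fundamental identity $U_{U_cx}=U_cU_xU_c$ together with the ideal property of $I$ to transfer density from $J$ to $I\cap J$. The only cosmetic difference is that the paper invokes the two-variable characterization of density from \cite[Proposition 1.9]{densos} and argues contrapositively via annihilators (from $U_c(K\cap I)=0$ deduce $U_cK\subseteq ann_Q(I)\cap I=0$, hence $c=0$), whereas you work directly from the $n$-variable definition with a constructive two-stage choice of $y_1,y_2$; the (AQ2) verifications are contrapositives of one another.
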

\begin{proof}
Note  that, by \ref{intersection inner ideals},  $I\cap J$ is a nonzero ideal of $J$, and moreover $I\cap J$ is nondegenerate as an algebra \cite{mc-inh}.

Let $q\in  I$.
Clearly ${\cal D}_J(q)\cap (I\cap J)={\cal D}_J(q)\cap I\subseteq {\cal D}_{I\cap J}(q)$, and for any $a\in I\cap J$ we have $({\cal D}_J(q):a)_L\cap I\subseteq ({\cal D}_{I\cap J}(q):a)_L$.
Thus for any $a,b\in I\cap J$ it holds that $({\cal D}_J(q):a)_L\cap ({\cal D}_J(q):b)_L\cap I\subseteq ({\cal D}_{I\cap J}(q):a)_L\cap ({\cal D}_{I\cap J}(q):b)_L$.

By \cite[Proposition 1.9]{densos}, to prove the density of ${\cal D}_{I\cap J}(q)$ in $I\cap J$ it suffices to prove that
 $U_c\big(({\cal D}_{I\cap J}(q):a)_L\cap ({\cal D}_{I\cap J}(q):b)_L\big)=0$ for any $a,b,c\in I\cap J$ implies  $c=0$.

Let $a,b,c\in I\cap J$ and assume
$U_c\big(({\cal D}_{I\cap J}(q):a)_L\cap ({\cal D}_{I\cap J}(q):b)_L\big)=~0$. Then we have
$U_c\big(({\cal D}_J(q):a)_L\cap ({\cal D}_J(q):b)_L\big)=0$. Let $K=({\cal D}_J(q):a)_L\cap ({\cal D}_J(q):b)_L$ and take $x\in K$. Then, since  $K$ is an inner ideal of $J$, $U_{U_cx}I=U_cU_xU_cI\subseteq U_cU_KI\subseteq U_c(K\cap I)=0$. Hence $U_cK\subseteq ann_Q(I)$.
But $c\in I\cap J$, hence $U_cK\subseteq ann_Q(I)\cap I=0$. Thus $U_cK=0$, which implies  $c=0$ by the density of ${\cal D}_J(q)$ in $J$ and \cite[Proposition 1.9]{densos}. Hence  ${\cal D}_{I\cap J}(q)$ is a dense inner ideal of $I\cap J$.

Finally suppose that $U_q{\cal D}_{I\cap J}(q)=0$. Then we have $U_{U_q{\cal D}_J(q)}{\cal D}_{I\cap J}(q)=U_qU_{{\cal D}_J(q)}U_q{\cal D}_{I\cap J}(q)
=~0$, and since $U_q{\cal D}_J(q)\subseteq I\cap J$, the density of ${\cal D}_{I\cap J}(q) $ in $I\cap J$ implies that $U_q{\cal D}_J(q)=0$. Hence  $q=0$ because $Q$ is an algebra of quotients of $J$.
\end{proof}

\begin{teorema}\label{lc_local_order_socle}    Let $J$ be a nondegenerate Jordan algebra with maximal algebra of quotients $Q_{max}(J)$. Then $LC(J)$ is a local order in $Soc(Q_{max}(J))$.
\end{teorema}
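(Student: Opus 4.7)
The plan is to apply the characterization of local orders in algebras equal to their socle given by Theorem \ref{whylocalarelocal}, with $J$ there played by $LC(J)$ and $Q$ by $\widetilde{Q}:=Soc(Q_{max}(J))$. First I would note that Theorem \ref{caso general} gives $LC(J)=J\cap Soc(Q_{max}(J))$, so $LC(J)$ is a subalgebra of $\widetilde{Q}$, and $\widetilde{Q}$ equals its own socle by \ref{socle-def}. Nondegeneracy of $LC(J)$ is inherited from $J$ via \cite{mc-inh}. The case $LC(J)=0$ is trivial (by \ref{intersection inner ideals} this forces $\widetilde{Q}=0$), so I would assume $LC(J)\ne 0$. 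Then $\widetilde{Q}$ is a nonzero ideal of $Q_{max}(J)$, and by Lemma \ref{quot_intersection} applied to the algebra of quotients $Q_{max}(J)$ of $J$, $\widetilde{Q}$ is a general algebra of quotients of $\widetilde{Q}\cap J=LC(J)$; in particular, by \ref{InTEx}, it is an innerly tight extension.

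Next I would check condition (LOS3): for $x\in LC(J)$, Lemma \ref{local-q} gives that $\widetilde{Q}_x$ is a general algebra of quotients of $LC(J)_x$, and since $x\in Soc(\widetilde{Q})$ the local algebra $\widetilde{Q}_x$ has finite capacity by \cite[Lemma 0.7(b)]{pi-i}; Lemma \ref{eq} then yields that $LC(J)_x$ is a classical order in $\widetilde{Q}_x$. For (LOS1), Lemma \ref{quotwithsocleislocal} applied to the algebra of quotients $\widetilde{Q}\supseteq LC(J)$ with $\widetilde{Q}=Soc(\widetilde{Q})$ gives $SemiInj(LC(J))\subseteq LocInv(\widetilde{Q})$, while the reverse containment $LocInv(\widetilde{Q})\cap LC(J)\subseteq SemiInj(LC(J))$ is the general remark recorded before \ref{localsemiinject}.

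The main step is (LOS2): given $q\in\widetilde{Q}$, I would invoke the Litoff-type result \ref{litoff_QJA} inside $\widetilde{Q}=Soc(\widetilde{Q})$ to produce an idempotent $e\in\widetilde{Q}$ with $q\in U_e\widetilde{Q}$, and then apply Lemma \ref{InTEx_idempotents} to the innerly tight extension $\widetilde{Q}\supseteq LC(J)$ to obtain $x\in SemiInj(LC(J))$ with $e=P(x)$, whence $q\in U_e\widetilde{Q}=U_x\widetilde{Q}$. This is the delicate point, since the literal hypothesis of \ref{InTEx_idempotents} would ask $LC(J)=Soc(LC(J))$, which need not hold in general (already for $J=\mathbb{Z}$, $LC(J)=\mathbb{Z}$ while $Soc(LC(J))=0$). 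However, inspection of the proof of \ref{InTEx_idempotents} shows that what is actually used is that the over-algebra equals its socle, so that for the idempotent $e$ produced by Litoff the inner ideal $\widetilde{Q}_2(e)$ sits inside $Soc(\widetilde{Q})$ and carries a strong frame; combined with inner tightness of $\widetilde{Q}$ over $LC(J)$, this yields the required semi-injective element $x=k_1+\cdots+k_n\in LC(J)$ of full rank in $\widetilde{Q}_2(e)$.

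Once (LOS1)--(LOS3) are in hand, Theorem \ref{whylocalarelocal} gives that $LC(J)$ is a local order in $\widetilde{Q}=Soc(Q_{max}(J))$, which is the desired conclusion.
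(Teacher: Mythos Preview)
Your proposal is correct and follows essentially the same route as the paper: reduce via \ref{caso general} and \ref{quot_intersection} to the situation where $LC(J)$ sits inside $\widetilde{Q}=Soc(Q_{max}(J))$ as a general algebra of quotients, then verify (LOS1)--(LOS3) of \ref{whylocalarelocal} using \ref{quotwithsocleislocal}, \ref{litoff_QJA} together with \ref{InTEx_idempotents}, and \ref{local-q} plus \ref{eq} (the paper phrases the last step through \cite[Proposition 2.10]{fgm}, which amounts to the same thing). Your observation about the hypothesis of \ref{InTEx_idempotents} is apt: the stated condition ``$J=Soc(J)$'' is not what the proof of that lemma actually uses---the argument only needs $e\in Soc(\widetilde{J})$ (so that $\widetilde{J}_2(e)$ has finite capacity and carries a strong frame) together with inner tightness; the paper's own proof of \ref{lc_local_order_socle} silently relies on this same reading, since even after reducing to $J=LC(J)$ one does not have $J=Soc(J)$ in general.
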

\begin{proof}
Let us denote $Q = Q_{max}(J)$. Since  $LC(J)=J\cap Soc(Q)$ by \ref{caso general}, we can assume that $J=LC(J)$ by \ref{quot_intersection}, so that $J$  has an algebra of quotients $Q=Soc(Q)$ with dcc on principal inner ideals. By \ref{quotwithsocleislocal}, $SemiInj(J)\subseteq LocInv(Q)$ hence $SemiInj(J) = LocInv(Q)\cap J$ which is LOS1. Take now an element  $q\in Q = Soc(Q)$, using \ref{litoff_QJA} we obtain an idempotent $e\in Q$ such that $q\in U_eQ$, and using  \ref{InTEx_idempotents}, we can find an element $s\in SemiInj(J)$ such that $e = P(s)$. Thus $q \in U_sQ$, and LOS2 holds. Finally, it suffices to prove LOS3, for any $x\in J$, $J_x$ is a classical order in $Q_x$, and this follows as usual from  \cite[Proposition 2.10]{fgm}
applied to the fact that $Q_x$ is an algebra of quotients of $J_x$ (\ref{local-q}), and $Q_x$ has finite capacity since $x\in Soc(Q)$.
\end{proof}

As a straightforward application of  \ref{lc_local_order_socle} we get:

\begin{teorema}\label{lc_local_order_dcc}    Let $J$ be a Jordan algebra. Then the follo\-wing conditions are equivalent:
\begin{enumerate}
\item[(i)] $J$ is a local order in a nondegenerate Jordan algebra $Q$ with dcc on principal inner ideals.
\item[(ii)] $J$ is nondegenerate, and $J=LC(J)$ (that is $J_x$ is LC for all $x\in J$).
\end{enumerate}
In this case, $J$ is strongly prime if and only if $Q$ is simple.
\end{teorema}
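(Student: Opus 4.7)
I plan to derive the theorem as a direct application of Theorem~\ref{lc_local_order_socle}, together with the operating characterization of local orders in algebras that equal their socle provided by Theorem~\ref{whylocalarelocal} and the degeneracy-transfer Lemma~\ref{lift degeneracy}.

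The implication (ii)$\Rightarrow$(i) will be essentially immediate. Assuming $J$ is nondegenerate with $J = LC(J)$, Theorem~\ref{lc_local_order_socle} exhibits $J$ as a local order in $Q := Soc(Q_{max}(J))$, and this candidate $Q$ is visibly nondegenerate (as the socle of the nondegenerate algebra $Q_{max}(J)$) and satisfies the dcc on principal inner ideals by~\ref{socle-def}.

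For (i)$\Rightarrow$(ii), I will first invoke Lemma~\ref{lift degeneracy} to deduce nondegeneracy of $J$ from that of $Q$. To show $J = LC(J)$, I will fix an arbitrary $x \in J$ and apply property (LOS3) of Theorem~\ref{whylocalarelocal}, which gives that $J_x$ is a classical order in $Q_x$; since $Q = Soc(Q)$, the local algebra $Q_x$ has finite capacity by the criterion recalled in~\ref{socle-def}, so $J_x$ is a classical order in a unital Jordan algebra of finite capacity, hence LC by definition~\ref{lc def}. This yields $x \in LC(J)$ for every $x \in J$.

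For the final simple-versus-strongly-prime assertion, the direction $J$ strongly prime $\Rightarrow$ $Q$ simple is straightforward from the socle structure recalled in~\ref{socle-def}: if $Q$ decomposed into two or more simple ideals $Q_1, Q_2, \ldots$, the ideals $J \cap Q_i$ of $J$ would be nonzero by~\ref{intersection inner ideals} and would satisfy $U_{J\cap Q_1}(J\cap Q_2) \subseteq Q_1 \cap Q_2 = 0$, contradicting primeness of $J$. The reverse direction, $Q$ simple $\Rightarrow$ $J$ strongly prime, is the step I expect to be the main obstacle, precisely because an ideal of $J$ need not be an ideal of $Q$, so a hypothetical primeness failure in $J$ does not lift directly to $Q$. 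My plan there is to suppose for contradiction that nonzero ideals $K_1, K_2$ of $J$ satisfy $U_{K_1}K_2 = 0$, pick nonzero $k_i \in K_i$, and use Theorem~\ref{litoff_QJA} together with Lemma~\ref{InTEx_idempotents} to produce an idempotent $e = P(s)$ with $s \in SemiInj(J)$ such that $k_1, k_2 \in J_2(e)$; then Theorem~\ref{whylocalarelocal} gives $J_2(e)$ as a classical order in the finite-capacity Peirce space $Q_2(e)$, and combining the annihilator transfer from Lemma~\ref{ann_lc}(i) with the simplicity/primeness correspondence for classical orders in algebras of finite capacity (as in Theorem~\ref{goldieThm}) should deliver the desired contradiction.
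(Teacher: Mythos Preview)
Your proof of the equivalence (i)$\Leftrightarrow$(ii) is essentially identical to the paper's: the paper also invokes Lemma~\ref{lift degeneracy} and (LOS3) of Theorem~\ref{whylocalarelocal} for (i)$\Rightarrow$(ii), and Theorem~\ref{lc_local_order_socle} for (ii)$\Rightarrow$(i).

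For the final ``strongly prime $\Leftrightarrow$ simple'' assertion you take a different route. The paper dispatches both directions in one line by observing that $Q$ is a general algebra of quotients of $J$ (via Theorem~\ref{localordersareaq}) and then invoking the annihilator correspondences of Lemma~\ref{ann_lc} together with \cite[Lemmas~5.3--5.4]{densos}, which transfer primeness between $J$ and $Q$ through the lattice of annihilator ideals. Your argument is more hands-on: for one direction you use the socle decomposition of $Q$ and \ref{intersection inner ideals}; for the other you localize a hypothetical primeness failure inside a Peirce space $J_2(e)$ via Litoff~\ref{litoff_QJA} and \ref{InTEx_idempotents}, and then contradict primeness of the classical order $J_2(e)$ in the simple finite-capacity algebra $Q_2(e)$. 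This works, with two small remarks. First, the reference to Theorem~\ref{goldieThm} is not quite right, since $Q_2(e)$ has finite capacity but need not be artinian (e.g.\ when $Q$ is a nonartinian quadratic factor); the correct citation is the LC version \cite[Theorem~10.2]{fgm}, which carries the same ``$Q$ simple $\Leftrightarrow$ $J$ prime'' clause. Second, the appeal to Lemma~\ref{ann_lc}(i) is not really needed in your sketch: once $k_1,k_2\in J_2(e)$, the sets $K_i\cap J_2(e)$ are already nonzero ideals of $J_2(e)$ with $U_{K_1\cap J_2(e)}(K_2\cap J_2(e))=0$, directly contradicting the primeness of $J_2(e)$. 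Your approach trades the black-box ideal correspondence for an explicit local computation; the paper's approach is shorter but leans on external machinery.
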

\begin{proof}
(i)$\Rightarrow$(ii) By \ref{lift degeneracy}, $J$ is nondegenerate. Now, let $a\in J$. Since $J$ is a local order in $Q$, by LOS3 of \ref{whylocalarelocal}, $J_a$ is a classical order in $Q_a$, and since $a \in Soc(Q) = Q$, $Q_a$ has finite capacity. Therefore $a \in LC(J)$.

(ii)$\Rightarrow$(i)  Assume now that $J$ is nondegenerate and  satisfies $J=LC(J)$. Then by \ref{lc_local_order_socle}, $J$ is a local order in $Soc(Q_{max}(J))$ which is nondegenerate and has dcc on principal inner ideals.

The last assertion easily follows from the relation between the annihilators of $J$ and $Q$ proved in \ref{ann_lc}, and the relation between annihilator ideals of a nondegenerate algebra and its algebras of quotients \cite [Lemma 5.3 and Lemma 5.4]{densos}.
\end{proof}

\begin{teorema}\label{main local goldie}
For a Jordan algebra $J$  the following conditions are equiva\-lent:
\begin{enumerate}
\item[(1)] $J$ is a local order in a nondegenerate locally artinian Jordan algebra $Q$.
\item[(2)]  $J$ is nondegenerate, satisfies the acc on annihilators of its elements, and every element $x\in J$ has finite uniform dimension.
\item[(3)] $J$ is nondegenerate and $J_x$ is Goldie for all nonzero element $x\in J$.
\end{enumerate}
Moreover, $J$ is strongly prime if and only if $Q$ is simple.
\end{teorema}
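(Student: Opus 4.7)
The plan is to establish the cyclic implications $(1) \Rightarrow (2) \Rightarrow (3) \Rightarrow (1)$ and to read off the final equivalence between strong primeness of $J$ and simplicity of $Q$ from Theorem \ref{lc_local_order_dcc}, which is applied in the last step.

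For $(1) \Rightarrow (2)$, the starting observation is that a locally artinian algebra $Q$ satisfies $Q = Soc(Q)$ by \ref{estructure loc artinian}, so the machinery for local orders in algebras equal to their socle is available. Nondegeneracy of $J$ comes from \ref{lift degeneracy}, the acc on annihilators of elements of $J$ from \ref{ann_lc}(iii), and finite uniform dimension of an arbitrary $x\in J$ is obtained by combining \ref{ii_damour} with LOS3 of \ref{whylocalarelocal}: $J_x$ is a classical order in $Q_x$, which is artinian by local artinianity of $Q$, so $J_x$ is Goldie and in particular has finite uniform dimension.

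For $(2) \Rightarrow (3)$, fix a nonzero $x \in J$; I aim to verify that $J_x$ is Goldie via characterization (iii) of Theorem \ref{goldieThm}. Nondegeneracy of $J_x$ is automatic, and its finite uniform dimension is immediate from \ref{ii_damour}. The technical step is to produce a uniform element in every nonzero ideal $\bar I$ of $J_x$: I would lift $\bar I$ to an ideal $I_0$ of the homotope $J^{(x)}$ containing $Ker\,x$, pass to the nonzero inner ideal $I = U_xI_0$ of $J$ (nonzero as in the argument of Lemma \ref{strongzetalocal}), and invoke the acc on annihilators of $J$ together with the fact recalled in \ref{goldie stuff} that this acc forces every nonzero inner ideal to contain a uniform element, yielding $w \in I$ uniform, say $w = U_xw'$ with $w' \in I_0$. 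The standard identification $(J_x)_{\bar{w'}} \cong J_{U_xw'} = J_w$ of iterated local algebras then transports uniformity from $w$ to $\bar{w'}$ via the characterization of uniform elements as those whose local algebra is a Jordan domain.

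For $(3) \Rightarrow (1)$, the Goldie property of each $J_x$ makes it, in particular, LC, so $J = LC(J)$ and \ref{lc_local_order_dcc} identifies $J$ as a local order in $Q := Soc(Q_{max}(J))$, a nondegenerate algebra with dcc on principal inner ideals. The main obstacle is to upgrade this to local artinianity; by \ref{estructure loc artinian} it suffices to rule out ideals of $Q$ that are nonartinian quadratic factors. Supposing such an ideal $I$ exists, it is simple and unital, say with unit $e$, and by \ref{InTEx_idempotents} one obtains $x \in SemiInj(J)$ with $P(x) = e$; then \ref{localsemiinject} together with \ref{quotlocalsemiinj} show that $J_{x^2}$ is a classical order in $I$. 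But $J_{x^2}$ is Goldie by hypothesis, so Theorem \ref{goldieThm}(i) equips it with a nondegenerate artinian classical algebra of quotients, and the uniqueness of the classical algebra of quotients of a nondegenerate Jordan algebra forces $I$ to be artinian, contradicting the choice of $I$. Thus $Q$ is locally artinian, and the strong primality versus simplicity equivalence is exactly the last statement of Theorem \ref{lc_local_order_dcc}.
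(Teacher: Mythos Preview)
Your proof is correct and follows essentially the same route as the paper's: the implications $(1)\Rightarrow(2)$ and $(2)\Rightarrow(3)$ are virtually identical (same references, same lifting of ideals to find uniform elements). For $(3)\Rightarrow(1)$ both arguments invoke \ref{lc_local_order_socle}/\ref{lc_local_order_dcc} to realize $J$ as a local order in $Q=Soc(Q_{max}(J))$ and then verify local artinianity of $Q$; the paper does this directly by noting that each $Q_x$ (for $x\in J$) is a finite-capacity classical algebra of quotients of the Goldie algebra $J_x$, hence artinian, whereas you argue by contradiction via \ref{estructure loc artinian} and \ref{InTEx_idempotents} to rule out nonartinian quadratic factor ideals---a slightly more explicit variant of the same idea.
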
\begin{proof}
(1)$\Rightarrow$(2) Since $J$ is a local order in a nondegenerate $Q$ with dcc on principal inner ideals, the nondegeneracy of $J$ follows from \ref{lift degeneracy}, while acc on annihilators of its elements is \ref{ann_lc}(iii). Finally by \ref{ii_damour}, the fact that all $x\in J$ have finite uniform dimension is equivalent to each $J_x$ having finite uniform dimension, and since by LOS3 $J_x$ is a classical order in the artinian algebra $Q_x$, it follows from \ref{goldieThm} that $J_x$ has finite uniform dimension.

(2)$\Rightarrow$(3) We already have that $J$ is nondegenerate by hypothesis. We prove that for all $x\in J$, the local algebra $J_x$ is Goldie by appealing to characterization (iii) of \ref{goldieThm}. Since we are already assuming that $J_x$ has finite uniform dimension, it suffices to prove that each nonzero ideal $\bar I $ of $J_x$ contains a uniform element. Set $I = \{ y \in J\mid y+Ker\,x \in \bar I\}$, the preimage of $\bar I$ by the projection $J\rightarrow J_x$. Since $J$ has the acc on annihilators of elements every nonzero inner ideal contains a uniform element, in particular we can choose a uniform element $z_0\in U_xI$ since $U_xI$ is a nonzero inner ideal. Clearly $z_0 = U_xy_0$ for some $y_0 \in I$, and since $z_0$ is uniform, $(J_x)_{\overline{y_0}} = U_{U_xy_0} = J_{z_0}$ is a Jordan domain, hence $\overline{z_0} \in \bar I$ is uniform, and therefore $J_x$ is Goldie.

(3)$\Rightarrow$(1) Since every local algebra $J_x$ of $J$ is Goldie, $J$ has $J = LC(J)$. From  \ref{lc_local_order_socle} we know that $J$ is a local order in $Q= Soc(Q_{max}(J))$, so it suffices to note that $Q$ is locally artinian since $Q_x$ is has finite capacity by \ref{lc_local_order_socle}, and is a  classical algebra of quotients of the Goldie algebra $J_x$ by LOS3 of \ref{whylocalarelocal}.

The last assertion follows directly as a particular case of the last assertion of \ref{lc_local_order_dcc}.
\end{proof}

We finally prove that algebras with dcc on principal inner ideals in which a given algebra is a local order are unique.

\begin{proposicion}Let $Q_1$, $Q_2$ be Jordan algebras and let $J$ be a local order in both $Q_1$ and $Q_2$. If $J$ is nondegenerate, $Soc(Q_1) = Q_1$, and $Soc(Q_2) = Q_2$, then there exists a unique isomorphism $\alpha: Q_1\rightarrow Q_2$ that extends the identity mapping $J\rightarrow J$.\end{proposicion}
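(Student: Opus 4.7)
The plan is to embed both $Q_1$ and $Q_2$ into $Q_{max}(J)$ and identify their images with $S:=Soc(Q_{max}(J))$, then deduce existence and uniqueness of the isomorphism from the uniqueness of those embeddings.

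First, since each $Q_i$ equals its socle and $J$ is a local order in $Q_i$, Theorem~\ref{localordersareaq} makes $Q_i$ a general algebra of quotients of $J$. The universal property defining $Q_{max}(J)$ then provides homomorphisms $\alpha_i\colon Q_i\to Q_{max}(J)$ that fix $J$ pointwise; these are uniquely determined and injective by a standard density argument built on (AQ1), (AQ2) and \cite[Lemma~2.4]{densos} (any two extensions differ on a $q\in Q_i$ by an element $d\in Q_{max}(J)$ killed by the dense inner ideal ${\cal D}_J(q)$, so $d=0$).

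The core step is to show that $\alpha_i(Q_i)=S$. For the containment $\alpha_i(Q_i)\subseteq S$, transitivity of algebras of quotients gives $Q_{max}(J)=Q_{max}(Q_i)$, so Theorem~\ref{caso general} applied to $Q_i$ yields $LC(Q_i)=Q_i\cap S$; but $Q_i=Soc(Q_i)\subseteq LC(Q_i)$, since every local algebra $(Q_i)_x$ has finite capacity and is therefore a classical order in itself. For the reverse containment the plan is to verify that $Q_i$ is a local order in $S$ in the sense of Theorem~\ref{whylocalarelocal}, after which Lemma~\ref{LOofsocle} forces $\alpha_i(Q_i)=S$. Condition (LOS3) will follow from Lemma~\ref{local-q} (which makes $S_x$ an algebra of quotients of $(Q_i)_x$) together with Lemma~\ref{eq} (since $S_x$ is unital of finite capacity, because $x\in S=Soc(S)$). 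Condition (LOS2) combines the extended Litoff theorem~\ref{litoff_QJA} applied to $S$ with Lemma~\ref{InTEx_idempotents} applied to the innerly tight extension $Q_i\subseteq S$ (recall that an algebra of quotients is innerly tight), thus lifting any idempotent of $S$ produced by Litoff to a semi-injective element of $Q_i$. Condition (LOS1) will follow from Lemma~\ref{quotwithsocleislocal} (giving $SemiInj(Q_i)\subseteq LocInv(S)$), together with the elementary observation that if $x\in Q_i$ is locally invertible in $S$, then $U_x^2q=0$ for $q\in Q_i\subseteq S$ forces $U_xq=0$ by semi-injectivity of $x$ inside $S$.

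Once the identifications $\alpha_i(Q_i)=S$ are in hand, the composition $\alpha:=\alpha_2^{-1}\circ\alpha_1\colon Q_1\to Q_2$ is the desired isomorphism extending the identity on $J$; and any other such $\beta$ would make $\alpha_2\circ\beta\colon Q_1\to Q_{max}(J)$ a second extension of the identity of $J$, equal to $\alpha_1$ by uniqueness, whence $\beta=\alpha_2^{-1}\circ\alpha_1=\alpha$. The main technical difficulty is the second step, especially (LOS2) and (LOS3): one must keep simultaneous control of the three layers $Q_i\subseteq S\subseteq Q_{max}(J)$ and how their local algebras, socles and algebra-of-quotients structures interact, which is exactly what the machinery built in Sections~2--4 (Theorems~\ref{caso general}, \ref{whylocalarelocal}, \ref{lc_local_order_socle}) is designed for.
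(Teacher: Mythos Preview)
Your proposal is correct and follows essentially the same route as the paper: both arguments embed the $Q_i$ into $Q_{max}(J)=Q_{max}(Q_i)$ via Theorem~\ref{localordersareaq}, identify each $Q_i$ with $Soc(Q_{max}(J))$ by combining Theorem~\ref{caso general} with Lemma~\ref{LOofsocle}, and read off the isomorphism. The only difference is cosmetic: where the paper applies Theorem~\ref{lc_local_order_socle} to $Q_i$ (using $Q_i=LC(Q_i)$) to obtain at once that $Q_i$ is a local order in $Soc(Q_{max}(Q_i))$, you instead re-verify (LOS1)--(LOS3) by hand, which is exactly what the proof of that theorem does, so you could shorten your argument by citing it directly.
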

\begin{proof}By \ref{localordersareaq}, both $Q_1$ and $Q_2$ are general algebras of quotients of $J$. Then $Q_{max}(Q_i) = Q_{max}(J)$ for $i=1,2$, and by \cite[Lemma 2.12]{densos}, there is a unique isomorphism $\beta: Q_{max}(Q_1) \rightarrow Q_{max}(Q_2)$ which restricts to the identity on $J$. Clearly $\beta(Soc(Q_{max}(Q_1))\subseteq Soc(Q_{max}(Q_2))$. Switching the roles of $Q_1$ and $Q_2$ we obtain a unique isomorphism $\gamma: Q_{max}(Q_2) \rightarrow Q_{max}(Q_1)$ which restricts to the identity on $J$, and again it is clear that $\gamma(Soc(Q_{max}(Q_2))\subseteq Soc(Q_{max}(Q_1))$. Therefore, from the uniqueness of the isomorphisms we have $\gamma= \beta^{-1}$, and $\beta$ restricts to an isomorphism $Soc(Q_{max}(Q_1))\cong Soc(Q_{max}(Q_2))$ which in turn, restricts to the identity on $J$.  This isomorphism obviously induces an isomorphism $LC(Soc(Q_{max}(Q_1)))\cong LC(Soc(Q_{max}(Q_2)))$. Now, since $Q_i= Soc(Q_i)$ implies $Q_i = LC(Q_i)$, $Q_i$ is  a local order in $LC(Soc(Q_{max}(Q_i)))$ by \ref{lc_local_order_socle}, and we get $Q_i = LC(Soc(Q_{max}(Q_i)))$ by \ref{LOofsocle}, and thus there is a unique isomorphism $Q_1\cong Q_2$ which restricts to the identity on $J$.\end{proof}

%
%
 \addcontentsline{toc}{chapter}{\protect\numberline{}  Bibliograf\'{\i}a.}


\end{document}